\documentclass[11pt]{article}
\usepackage{amssymb,amsmath,amsthm,graphicx}
\usepackage{mathrsfs}
\usepackage{color}
\usepackage{tikz}
\usepackage{subcaption}
\usepackage[colorlinks,linkcolor=blue,anchorcolor=blue,citecolor=blue]{hyperref}

\numberwithin{equation}{section}

\newtheorem{theorem}{Theorem}

\newtheorem{corollary}[theorem]{Corollary}

\newtheorem{lemma}[theorem]{Lemma}
\newtheorem{proposition}[theorem]{Proposition}
\newtheorem{remark}[theorem]{Remark}

\newcommand{\R}{\mathbb{R}}
\newcommand{\T}{\mathbb{T}}

\renewcommand{\P}{\mathbf{P}}

\newcommand{\ip}{(\mathbb{I}-\mathbf{P})}

\let\pt=\partial

\newcommand{\dd}{\mathrm{d}}
\let\d=\delta
\let\a=\alpha
\let\e=\varepsilon

\let\s=\sigma

\let\G=\Gamma

\let\g=\gamma
\let\th=\theta

\let\b=\beta

\newcommand\normmm[1]{\left\vert\kern-0.25ex \left\vert\kern-0.25ex \left\vert #1 \right\vert\kern-0.25ex \right\vert\kern-0.25ex \right\vert}
\newcommand\normm[1]{\left\lVert#1\right\rVert}
\newcommand\normmb[1]{\left[\kern-0.25ex\left[#1\right]\kern-0.25ex\right]}
\newcommand\norm[1]{\left\lvert#1\right\rvert}
\newcommand\inn[1]{\left\langle#1\right\rangle}

\newcommand\cQ{\mathcal{C}}

\begin{document}

\title{Regular Lenard-Balescu equations in a Periodic Box}
\author{Junhwa Jung\footnotemark[1] \and Toan T. Nguyen\footnotemark[1]}
\date{}

\maketitle

\footnotetext[1]{Penn State University, Department of Mathematics, State College, PA 16802, USA. Emails: jbj5730@psu.edu, nguyen@math.psu.edu. TN's research is supported in part by the NSF under grant DMS-2349981 and by a Simons fellowship.}

\begin{abstract}

The Lenard-Balescu equation is a collisional kinetic model widely used in plasma physics as a Bogoliubov correction to the meanfield Vlasov theory. Unlike the classical Landau and Boltzmann collision operators, the Lenard-Balescu collisional kernel not only accounts for the binary interaction between particles, but also includes the collective meanfield effects. In this paper, we construct global smooth solutions to the {\em regular} Lenard-Balescu equation near global Maxwellians in a periodic box, thus extending the previous work by Duerinckx-Winter that treats the spatially homogenous case to the inhomogenous setting. 

\end{abstract}

\tableofcontents 

\section{Introduction}

In plasma physics, the classical Landau collision operator is often used to describe collisions between particles. This describes however the precise binary interactions between particles whose travel as a free particle before collisions, while charged particles may be under influence by collective meanfield effects. This latter interaction can be described by the following Lenard-Balescu equation \cite{Balescu, Lenard}
\begin{align} \label{LenardBalescu}
\pt_t F + v \cdot \nabla_{x} F &=\cQ(F) \quad  \quad\text{ in } \quad\R_{+}\times \T^{d} \times \R^{d} \\ \label{LB equation initial}
F(t, x, v)|_{t=0}  &=   F_0(x,v) 
\quad \quad \text{ on }   \quad\T^d \times \R^d,
\end{align}
in which $F(t,x,v)$ represents the scalar density distribution of particles at time $t\geq 0$, position $x\in \T^d$ and velocity $v\in \mathbb{R}^d$, with $d\ge 2$. The Lenard-Balescu collision operator is defined by
\begin{equation} \label{LB - definition}
    \cQ(F):= \nabla_{v} \cdot \int_{\R^{d}} B(v,v-v_{*};\nabla_{v} F) (F_{*}\nabla_{v} F-F\nabla_{v_*} F_{*}) \dd v_{*},
\end{equation}
with $F = F(t,x,v)$ and $F_* = F(t,x,v_*)$, whose collision matrix kernel is computed by 
\begin{equation} \label{kernelB}
    B(v,w;\nabla_{v} F) := \int_{\R^{d}} (k \otimes k) |\widehat{V}(k)|^2 \frac{\delta(k \cdot w)}{\norm{\e (k,k\cdot v; \nabla_{v} F)}^2} \dd k,
\end{equation}
for any $v,w\in \R^d$. Here, $\delta(\cdot)$ is the one-dimensional Dirac delta function, $\widehat V(k)$ denotes the Fourier transform of pair-interaction potential functions $V(x)$, and $ \e (k,k\cdot v; \nabla_{v} F)$ is the Penrose dispersion function defined by
\begin{equation} \label{def-efunction}
    \e (k,k\cdot v; \nabla_{v} F) := 1+ \widehat{V}(k) \lim_{\gamma\to 0^+}\int_{\R^{d}} \frac{k \cdot \nabla_{v} F(v_{*})}{k\cdot (v-v_{*})-i\gamma} \dd v_{*}.
\end{equation}

The function $ \e (k,k\cdot v; \nabla_{v} F)$ is the classical dielectric function that plays a central role in the stability theory of plasmas \cite{Trivelpiece, Penrose}, accounting for the collective meanfield interactions between particles. The function is indeed responsible for Penrose stability and Landau damping of spatially homogenous equilibria; see, e.g., \cite{Landau, MV, Toan}. In the special case when $    \e (k,k\cdot v; \nabla_{v} F) \equiv 1$ (i.e. no meanfield effects), the collision kernel $
B(v,w;\nabla_{v} F)$ in \eqref{kernelB} reduces to $\frac{c_V}{|w|}(\mathbb{I} - \frac{w\otimes w}{|w|^2})$, namely the classical Landau collision operator (with Coulomb interaction as often referred to in the literature) \cite{AlexVillani, Villanireview} for some constant $c_V$ depending on the interaction potential $\widehat V(k)$. In general, the Penrose function $\e (k,k\cdot v; \nabla_{v} F)$ may vanish, and in fact always vanishes in the low frequency regime $|k|\ll 1$ for long-range interaction potentials \cite{Strain, Toan} (such as the Coulomb interaction). In this case, the collision kernel $B(v,w;\nabla_{v} F)$ may become more singular \cite{Strain}.

The Lenard-Balescu equation \eqref{LenardBalescu} retains the physical properties of the Boltzmann and Landau collisional models, including the conservation of mass, momentum, and energy. In addition, the celebrated $H$-theorem also holds for the Lenard-Balescu equation, and therefore one would expect the convergence to equilibrium remains valid. Inspired by the work of Guo \cite{guo2002landau} on the Landau equation near global Maxwellians and of Duerinckx-Winter \cite{duerinckx2023well} on the spatially homogenous case, we establish the global well-posedness theory for the spatially inhomogenous Lenard-Balescu equation \eqref{LenardBalescu}, focusing on the case when the collision kernel is {\em regular}. Specifically, we focus on the equation \eqref{LenardBalescu} in the case when the interaction potential $V$ is short range or screened, namely those potentials for which the Penrose function  $\e (k,k\cdot v; \nabla_{v} F) $ defined as in \eqref{def-efunction} never vanishes near global Maxwellians.

\subsection{Main Results }

Our main results read as follow. 

\begin{theorem}\label{theo-main}  (Global well-posedness close to equilibrium).
Let $d\ge 2$, and $V \in L^1(\R^d) \cap \dot{H}^{{2}}(\R^d)$ be symmetric and positive definite so that $x V \in L^2(\R^d)$. 
There exists a $v$-weighted, $L^2$-based Sobolev space involving derivatives up to order $d+7$, equipped with a norm $\normmm{\cdot}$, and a constant $\d_{0} >0$ such that if the initial data $F_0 = \mu + \sqrt{\mu} f_0 \geq 0$ satisfies
\begin{align*}
\normmm{f_{0}} \le \d_{0},
\end{align*}
then the nonlinear Lenard-Balescu equation \eqref{LenardBalescu} admits a unique global strong solution $F = \mu + \sqrt{\mu} f \geq 0$ satisfying the uniform bound
\begin{align}\label{0-uniform-bound}
\normmm{f(t)} \leq C \normmm{f_{0}}
\end{align}
for all $t\ge 0$ and for some universal constant $C>0$.
\end{theorem}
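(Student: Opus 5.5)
We follow Guo's energy method for the Landau equation near global Maxwellians, adapted to the nonlinear, solution-dependent kernel $B(v,w;\nabla_v F)$ in the manner of Duerinckx--Winter.

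\textbf{Step 1: perturbative reformulation.} Write $F=\mu+\sqrt{\mu}f$. The equation becomes
\[
\pt_t f+v\cdot\nabla_x f+Lf=\Gamma(f),
\]
where $L f=-\mu^{-1/2}\,D\cQ(\mu)[\sqrt\mu f]$ is the linearization at $\mu$ and $\Gamma(f)$ collects everything else. Because the kernel depends on $F$ through the dielectric function, $\Gamma(f)$ contains two kinds of terms:
\begin{itemize}
\item bilinear terms of Landau type, $\nabla_v\cdot\int B(\cdot;\nabla\mu)(\dots)$, with the kernel frozen at $\mu$;
\item terms produced by the variation of $B$ in $\nabla_v F$, which are bilinear or higher order in $f$ and involve $\nabla_v f$ through $\e$.
\end{itemize}
Since $V$ is short range and $\mu$ is Penrose stable, $|\e(k,k\cdot v;\nabla\mu)|\ge c_0>0$. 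For $F$ close to $\mu$ in a norm controlling $\sup_k|\e(\cdot;\nabla F)-\e(\cdot;\nabla\mu)|$, a Sobolev embedding in $v$ (this is where the roughly $d/2$ extra derivatives come from) keeps $|\e|\ge c_0/2$. In that regime $B$ is smooth in $\nabla_v F$, with Landau-type bounds. We import from the homogeneous theory of Duerinckx--Winter the structure of $B(v,w;\nabla\mu)$: it behaves like $|w|^{-1}(\mathbb I-\hat w\otimes\hat w)$ up to bounded factors, using $V\in L^1\cap\dot H^2$ and $xV\in L^2$.

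\textbf{Step 2: linear theory.} We show that $L\ge0$, that its kernel is spanned by $\sqrt\mu\{1,v,|v|^2\}$, and that it satisfies the Guo-type coercivity
\[
\langle Lf,f\rangle\ge \lambda\,|\ip f|_\sigma^2
\]
for a weighted anisotropic dissipation norm $|\cdot|_\sigma$. The strategy is to compare $L$ with the Landau linearized operator through the two-sided bounds on $B(\cdot;\nabla\mu)$. Macroscopic dissipation of $\P f$ then comes from Guo's micro-macro (thirteen-moment) argument on $\T^d$, using conservation of mass, momentum and energy to kill the zero modes. This gives
\[
\sum_{|\alpha|\le N}\|\pt^\alpha \P f\|^2\lesssim \sum_{|\alpha|\le N}\|\pt^\alpha\ip f\|_\sigma^2+\frac{d}{dt}(\text{interaction functional})+\text{nonlinear terms}.
\]

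\textbf{Step 3: nonlinear estimates and closure.} Define an instant energy $\mathcal E(t)\approx\normmm{f}^2$, built from $\pt^\alpha_\beta$ derivatives with $|\alpha|+|\beta|\le N=d+7$ and polynomial or exponential $v$-weights, together with a dissipation $\mathcal D(t)$. The key estimate is
\[
|\langle \pt^\alpha_\beta\Gamma(f),w^2\pt^\alpha_\beta f\rangle|\lesssim \mathcal E^{1/2}\mathcal D.
\]
The hard part is the dependence of $B$ on $\nabla_v F$: differentiating $\e^{-2}$ up to $N$ times creates products of many derivatives of $f$ inside $v_*$-integrals. We handle these by Fa\`a di Bruno, placing the highest derivative in $L^2$ and the rest in $L^\infty$ via embedding, which explains the count $d+7$.

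A further difficulty is that the highest-order term $\pt^\alpha_\beta\nabla_v f$ appearing inside $\e$ costs one more derivative than the energy controls. We expect to recover it because $\e$ integrates $k\cdot\nabla_v F$ against a Cauchy kernel, so it depends on $F$ only through a weakly singular, integrated quantity. We would therefore integrate by parts in $v_*$ to move the gradient onto the kernel, and bound $\e$ and its derivatives by $\nabla_v$-free norms of $f$ with Hilbert-transform $L^2$ bounds. \textbf{This step, uniform lower bounds on $\e$ together with derivative-loss-free bounds on $\pt^\alpha_\beta$ of the kernel, is the main obstacle.}

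Combining Steps 2 and 3 gives
\[
\frac{d}{dt}\mathcal E+\lambda\mathcal D\le C\mathcal E^{1/2}\mathcal D.
\]
For $\mathcal E$ small this closes and yields \eqref{0-uniform-bound}. Local existence comes from an iteration on the linearized problem with the kernel frozen at the previous iterate, and uniqueness follows from the same estimates applied to the difference of two solutions. A continuity argument then extends the solution globally. Positivity $F\ge0$ is propagated along the iteration by the maximum principle for the degenerate parabolic equation with positive semidefinite diffusion matrix.
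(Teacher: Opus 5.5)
Your overall architecture matches the paper's: perturbation around $\mu$, coercivity of $L$ by comparison with the Landau kernel, macroscopic control of $\P f$, a trilinear bound of type $\mathcal E^{1/2}\mathcal D$, and a continuity argument. Using Guo's moment argument for $\P f$ instead of the paper's elliptic test functions is a harmless substitution.

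The gap is at the step you flag as the main obstacle: your way of removing the derivative loss in $\e$ fails. With $v_*=u_*\hat k+\eta_*$ and $\rho(u)=\int_{\hat k^\perp}F(u\hat k+\eta)\,\dd\eta$,
\[
\int_{\R^d}\frac{k\cdot\nabla_{v_*}F_*}{k\cdot(v-v_*)-i0}\,\dd v_*=\int_{\R}\frac{\rho'(u_*)}{\hat k\cdot v-u_*-i0}\,\dd u_*=-\int_{\R}\frac{\rho(u_*)}{(\hat k\cdot v-u_*-i0)^2}\,\dd u_*,
\]
so integrating by parts only trades $\rho'$ for the kernel $(u-i0)^{-2}$. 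That kernel is $\partial_u$ of the Cauchy kernel and is unbounded on $L^2$. The Cauchy transform commutes with $\partial_u$, so for fixed $k$, $\e$ depends on $F$ at genuinely first order. A perturbation oscillating at frequency $\lambda$ along $\hat k$ changes $\e$ by an amount comparable to $\lambda$ times its $L^2$ size. Hence $\pt^\alpha_\beta\e$ cannot be bounded by norms of $\pt^\alpha_\beta g$ without the extra $\nabla_v$, and the loss-free kernel bounds you plan cannot be derived this way.

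The paper does not recover the derivative; it pays for it with the dissipation. When all derivatives fall on $\e$ (estimate \eqref{T - upper 3}), it does three things:
\begin{itemize}
\item It uses $\d(k\cdot(v-v_*))$ to reduce to lines $\hat k\R$.
\item It puts $\pt^\alpha_\beta\e$ in $L^2_u$ via the Hilbert transform. This costs exactly one extra $v$-derivative, $\|\langle v\rangle^{-r}\pt^\alpha\langle\nabla_v\rangle^{|\beta|+1}g_1\|_{L^2}$, instead of the $\frac32+\d$ extra of the pointwise bound \eqref{epsilon - space and velocity derivative - estimate}.
\item It puts the low-order factors $\mathbf h_2,h$ in $L^\infty$ along the lines via $H^{1/2+\d}$.
\end{itemize}
Since $g_1$ enters only through $\sqrt\mu\, g_1$, any $v$-weight is free. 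Then \eqref{vdnorm} gives $\|g_1\|_{|\alpha|,|\beta|+1,r+\frac32,L^2}\lesssim\|g_1\|_{|\alpha|,|\beta|,r,D}$, so the lost derivative is absorbed by the dissipation, which contains $\nabla_v\pt^\alpha_\beta f$. This yields the dissipation$\times$dissipation$\times$energy bound \eqref{claimbd}, i.e.\ your target $\mathcal E^{1/2}\mathcal D$, obtained by spending dissipation rather than by a loss-free kernel estimate.

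Relatedly, ``polynomial or exponential weights'' is too vague. The dissipation only controls $\|\langle v\rangle^{-1/2}f\|+\|\langle v\rangle^{-3/2}\nabla_v f\|$. So the transport commutator $\pt^{\alpha+e_i}_{\beta-e_i}f$, and the factors that receive $\lfloor\frac d2+1\rfloor$ extra $x$-derivatives from $H^{d/2+}_x\subset L^\infty_x$, must be absorbed by a derivative-dependent weight hierarchy. The paper uses $\langle v\rangle^{N-|\alpha|-2|\beta|}$, so that $(\alpha,\beta)\to(\alpha+1,\beta-1)$ gains one power of $\langle v\rangle$ and \eqref{lowgamma3} holds. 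Note also that this $x$-embedding, not a Sobolev embedding in $v$, is the source of the $d/2$ extra derivatives: $|\e|\gtrsim 1$ needs only $\frac32+\d$ velocity derivatives pointwise in $x$.
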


Given that the Lenard–Balescu equation satisfies an H-theorem, it is anticipated that the solutions will relax to a Maxwellian equilibrium state. Specifically, we obtain the following theorem providing a quantitative rate of convergence of solutions to the equilibrium in the large time. 

\begin{theorem} \label{theo-2} (Convergence to equilibrium).
Under the assumption of Theorem \ref{theo-main}, let $F = \mu + \sqrt{\mu} f$ be the constructed unique global solution of the Lenard-Balescu equation \eqref{LenardBalescu}. Then, the followings hold. 

\begin{enumerate}
    \item If $\iint_{\T^d \times \R^d} \inn{v}^{l} |f_{0}|^2\; \dd x \dd v < \infty$ for some $l > 0$ and the initial data is sufficiently small in the norm given in Theorem \ref{theo-main}, then we have for $\d < 1$:
    \begin{align*}
    \iint_{\T^d \times \R^d} |f(t)|^2 \; \dd x \dd v\lesssim \inn{t}^{-\d l}  \iint_{\T^d \times \R^d} \inn{v}^{l} |f_{0}|^2 \; \dd x \dd v.
    \end{align*}

    \item If $\iint_{\T^d \times \R^d} e^{K \inn{v}^{\th}} |f_{0}|^2\; \dd x \dd v < \infty$ for some $0 < \th <2$ and $K >0$, or $\th = 2$ and some small enough $K>0$ (only depending on $V$)  and the initial data is sufficiently small in the norm given in Theorem \ref{theo-main}, then we have,
    \begin{align*}
    \iint_{\T^d \times \R^d} |f(t)|^2\; \dd x \dd v \lesssim \exp \left( - \frac{K}{C} t^{\frac{\th}{\th+1}}\right)  \iint_{\T^d \times \R^d} e^{K \inn{v}^{\th}} |f_{0}|^2\; \dd x \dd v,
    \end{align*}
    for some constant $C$.
\end{enumerate}

\end{theorem}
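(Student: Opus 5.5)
The plan follows the Strain--Guo approach for soft-potential Landau and Boltzmann equations, since the linearized Lenard--Balescu operator behaves like Landau with Coulomb-type kernel. Write $F=\mu+\sqrt\mu f$. Then $f$ solves $\pt_t f+v\cdot\nabla_x f+Lf=\Gamma(f,f)$, where $L$ is the linearization of $\cQ$ at $\mu$. Because $B(v,w;\nabla_v\mu)$ has the Landau structure $|w|^{-1}$ times a nondegenerate projection-like matrix, $L$ has the degenerate coercivity
$$\langle Lf,f\rangle\gtrsim |(\mathbb I-\P)f|_\sigma^2 .$$
Here $|g|_\sigma^2\sim \|\langle v\rangle^{-3/2}P_v\nabla_v g\|^2+\|\langle v\rangle^{-1/2}(\mathbb I-P_v)\nabla_v g\|^2+\|\langle v\rangle^{-3/2}g\|^2$, so the weight loss is $\langle v\rangle^{-3}$ in $L^2$. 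Together with the macroscopic estimates that Theorem \ref{theo-main} relies on, the global solution obeys an energy inequality of the form
$$\frac{d}{dt}\mathcal E(t)+c\,\mathcal D(t)\le 0,$$
where $\mathcal E\sim\|f\|^2$ (or a higher Sobolev version) and $\mathcal D\gtrsim\|\langle v\rangle^{-3/2}f\|^2$. I will use the mass, momentum and energy conservation laws to kill $\P f$, giving the bound on $\|\langle v\rangle^{-3/2}f\|^2$ for all of $f$.

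\textbf{Step 1: propagate weighted norms.} I will redo the energy estimate with weights $w=\langle v\rangle^{l}$ or $w=e^{K\langle v\rangle^\theta}$. The transport term commutes with $w$. The commutator $[L,w]$ is controlled by the dissipation, plus lower-order terms absorbed by $\mathcal D$ on compact $v$-sets. For $\theta=2$ the Maxwellian-type weight forces $K$ small, so that $\mu^{-1/2}e^{K|v|^2}$ still gets the dissipation sign; this is why $K$ depends on $V$. The nonlinear term is bounded by $\normmm{f}\cdot(\text{weighted dissipation})$, and by \eqref{0-uniform-bound} this is absorbed. The outcome is the uniform-in-time bound $\|wf(t)\|\lesssim\|wf_0\|$.

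\textbf{Step 2: interpolation (Strain--Guo trick).} Fix $t$ and split velocity into $\{\langle v\rangle^{3}\le t^{a}\}$ and its complement. On the first set $\mathcal D\ge t^{-a}\|f\|^2$. On the second set $\|f\|^2\le \sup(1/w^2)\,\|wf_0\|^2$. Hence
$$\frac{d}{dt}\mathcal E+ct^{-a}\mathcal E\lesssim t^{-a}\,\|wf_0\|^2\,\sup_{\langle v\rangle^3>t^a}w^{-2}.$$
Multiplying by the integrating factor $e^{ct^{1-a}/(1-a)}$ and integrating, the homogeneous part decays like $e^{-ct^{1-a}}$.

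\textbf{Step 3: optimize $a$.}
\begin{itemize}
\item Polynomial weight, $w^2=\langle v\rangle^{l}$: the forcing is $\lesssim t^{-al/3}$, so $\mathcal E\lesssim t^{-al/3}$. Choosing $a$ close to $1$ gives the rate $\langle t\rangle^{-\delta l}$, which is part 1 up to normalizing the constants.
\item Exponential weight: the forcing is $e^{-Kt^{a\theta/3}}$. Balancing $t^{1-a}$ against $Kt^{a\theta/3}$ gives the stretched exponential of part 2. The precise exponent $\theta/(\theta+1)$ depends on the effective weight loss; $\theta/(\theta+1)$ corresponds to a loss of one power of $\langle v\rangle$ in the dissipation, which I will match against the exact form of $|\cdot|_\sigma$.
\end{itemize}

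\textbf{Main obstacle.} The hard part is Step 1 for the exponential weights. It requires commutator estimates for the Lenard--Balescu operator, whose kernel depends nonlocally on $\nabla_v F$ through $\e$. I will handle this by using the regularity assumptions on $V$ to show that $B(v,w;\nabla_v F)$ is a smooth perturbation of $B(v,w;\nabla_v\mu)$ with Landau-type bounds. With that in hand, the Landau weighted estimates of Strain--Guo transfer directly.
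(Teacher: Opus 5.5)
Your architecture matches the paper's: propagate a $w$-weighted $L^2$ bound uniformly in time (Step~1 of Section~\ref{pf of thm2}), then split velocity space according to $t$ and integrate the resulting differential inequality (Step~2). The problem is the quantitative input you feed into the splitting. You assert that the dissipation only controls $\|\langle v\rangle^{-3/2}f\|_{L^2}^2$, a weight loss of $\langle v\rangle^{-3}$. That is incorrect, both for the Lenard--Balescu dissipation here and for Coulomb Landau, whose $\sigma$-norm has zeroth-order weight $\langle v\rangle^{(\gamma+2)/2}=\langle v\rangle^{-1/2}$. The zeroth-order part of $(\nabla_v+v)f$ is $vf$, which is parallel to $v$. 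By Lemma~\ref{lemma:A - properties}, $A$ has size $\langle v\rangle^{-3}$ in that direction, so for large $|v|$ one gets $vf\cdot A\,vf\approx\langle v\rangle^{-3}|v|^2|f|^2\approx\langle v\rangle^{-1}|f|^2$. This is \eqref{equivD}: $\|f\|_D^2\gtrsim\|\langle v\rangle^{-1/2}f\|_{L^2}^2$, a loss of only one power of $\langle v\rangle$.

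This is not cosmetic. With your loss, your own Step~3 gives $\mathcal E(t)\lesssim t^{-al/3}$ with $a<1$, so only rates $\langle t\rangle^{-\delta l/3}$. Balancing $t^{1-a}$ against $Kt^{a\theta/3}$ gives $a=3/(3+\theta)$ and the rate $\exp(-ct^{\theta/(\theta+3)})$, e.g.\ $t^{2/5}$ instead of $t^{2/3}$ for $\theta=2$. The factor $3$ sits in the exponent, so it cannot be absorbed by `normalizing the constants'. Nor can it be `matched against the exact form of $|\cdot|_\sigma$' as you wrote that form.

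With the correct loss, split instead at $\langle v\rangle\le\langle t\rangle^{\varepsilon}$. The dissipation then dominates $\langle t\rangle^{-\varepsilon}\iint\mathbf 1_{\langle v\rangle\le\langle t\rangle^\varepsilon}|f|^2$. By the Step~1 bound, the tail $\iint\mathbf 1_{\langle v\rangle>\langle t\rangle^\varepsilon}|f|^2$ is at most $\langle t\rangle^{-\varepsilon l}$, respectively $e^{-K\langle t\rangle^{\varepsilon\theta}}$, times the weighted initial norm. Integrating gives $\langle t\rangle^{-\varepsilon l}$ for every $\varepsilon<1$. Choosing $1-\varepsilon=\varepsilon\theta$ gives the exponent $\theta/(\theta+1)$. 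These are exactly the rates in the statement.
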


Theorems \ref{theo-main}-\ref{theo-2} provide the boundedness and convergence of solutions in a suitable energy and dissipation norm $\normmm{f(t)}$, see \eqref{normmm -definition} below, and thereby establishes the global-in-time existence and the relaxation to the equilibrium of solutions to the spatially inhomogeneous Lenard-Balescu equation \eqref{LenardBalescu}, thus extending the previous work by Duerinckx-Winter that treats the homogeneous case in \cite{duerinckx2023well}. The extension to the inhomogeneous setting faces several fundamental difficulties. Most notably, the transport term $v \cdot \nabla_x$ and the functional dependence of the dispersion function $\e(k,k\cdot v; \nabla_v F)$ on the distribution $F$ require a carefully designed weighted norm. This structure is essential to achieve the consistent control over the linear and nonlinear operators, particularly in managing the derivative loss inherent to the Lenard-Balescu kernel. Let us mention a few points.

\subsubsection*{Control of the Inhomogeneous Collision Operator}

Unlike the classical Landau equation, the Lenard-Balescu operator involves a dynamic dispersion function $\e$. In the inhomogeneous setting, we must control not only the velocity derivatives but also the spatial derivatives of $\e$. Building upon the framework in \cite{duerinckx2023well}, Sections \ref{sec-Pre} and \ref{sec-nonop} provide an extensive analysis of the differences between the homogeneous and inhomogeneous kernels. We particularly focus on the weighted estimates required for both linear and nonlinear terms, which were not present in previous homogeneous studies.

\subsubsection*{Transport Term and Macro-Micro Decomposition}

A second major difficulty arises from the transport term $v \cdot \nabla_x F$ and its interaction with the kernel of the linearized operator $\mathcal{L}$. Following \cite{duerinckx2023well}, the linearized Lenard-Balescu and Landau operators share identical properties in the $V \in L^2, \dot{H}^2$ regime. Consequently, standard energy estimates yield dissipation only for the microscopic part (outside the kernel). In the homogeneous case, the kernel can be neutralized via Galilean scaling; however, this is impossible globally in an inhomogeneous space. To resolve this, we employ the test function method developed in \cite{esposito2013non, Esposito2017, jung2025global} in designing specific elliptic test functions corresponding to the macroscopic fluid components to recover the estimates for the kernel parts. This process is detailed in Section \ref{sec-L2}.

\subsubsection*{Weighted Norm Structure and Derivative Loss}

The most technical aspect of our proof lies in the design of the weighted norm structure. In the study of the Landau equation in a periodic box, Guo introduced weights depending on the number of velocity derivatives to control the transport term within the dissipation norm \cite{guo2002landau}. Our problem requires a similar but more complex structure. In the Lenard-Balescu equation, the dependence of the collision kernel on $F$ through the dispersion function causes a "loss" of one velocity derivative when estimating nonlinear terms—a phenomenon absent in the Landau case. To close the bootstrap argument, we implement a weight $\inn{v}^{-\norm{\a}-2\norm{\b}+N}$ that depends on both spatial ($\alpha$) and velocity ($\beta$) derivatives. More specifically, we shall propagate the weighted norms 
\begin{equation}\label{fnorm}\normm{\inn{v}^{N - \norm{\a}-2\norm{\b}}\partial_x^\alpha \partial_v^\beta f(t)}_{L^2_{x,v}} \end{equation}
for all $|\alpha|+|\beta|\le N$ with $N = d+7$ that control the energy of $f$. A similar norm is introduced to control the dissipation of $f$. See Section \ref{sec-norm} for the details. 

\begin{figure}[htbp]
    \centering
    
    % --- (a) Weighted Structure ---
    \begin{subfigure}[b]{0.32\textwidth}
        \centering
        \begin{tikzpicture}[scale=0.7] 
            \draw[->, >=stealth] (-0.5, 0) -- (5, 0) node[right] {$x$};
            \draw[->, >=stealth] (0, -0.5) -- (0, 5) node[above] {$v$};
            %\node[below left] at (0,0) {$O$};
            
            \draw[thick] (0, 4) -- (4, 0); 
            
            \foreach \p in {(0,0), (1,0), (0,1), (1,1), (4,0), (0,4)} {
                \filldraw [black] \p circle (2pt);
            }

            \node[below left, scale=0.7] at (0,0) {$d+7$};
            \node[below, scale=0.7] at (1,0) {$d+6$};
            \node[left, scale=0.7] at (0,1) {$d+5$};
            \node[above right, scale=0.7] at (1,1) {$d+4$};
            \node[below, scale=0.7] at (4,0) {$0$};
            \node[left, scale=0.7] at (0,4) {$-d-7$};

            \node at (2.2, 0.2) {$\dots$};
            \node at (0.2, 2.2) {$\vdots$};
%            \node at (2.0, 1.3) {$\ddots$};
        \end{tikzpicture}
        \caption{Weighted structure}
        \label{fig:graph_a}
    \end{subfigure}
    \hfill 
    % --- (b) Linear Control ---
    \begin{subfigure}[b]{0.32\textwidth}
        \centering
       \begin{tikzpicture}[scale=0.7]
            \draw[->, >=stealth] (-0.5, 0) -- (5, 0) node[right] {$x$};
            \draw[->, >=stealth] (0, -0.5) -- (0, 5) node[above] {$v$};
           % \node[below left] at (0,0) {$O$};
            
            \draw[thick] (0, 4) -- (4, 0);
            
          %  \foreach \p in {(0,0), (0.5,0), (0,0.5), (0.5,0.5), (4,0), (0,4)} {
          %      \filldraw [black] \p circle (2pt); }
            
%            \node at (2.2, 0.2) {$\dots$};
%            \node at (0.2, 2.2) {$\vdots$};
%            \node at (2.0, 1.3) {$\ddots$};

            \coordinate (A) at (1, 1.5);
            \coordinate (B) at (1, 1);
            \coordinate (C) at (1.5, 1);
            \coordinate (D) at (1, 0);
            \filldraw [black] (A) circle (2pt) node[above right, scale=0.7] {$(\a,\b)$};
            \filldraw [black] (B) circle (2pt) node[shift={(215:0.4)}, scale=0.5] {$(\a,\b-1)$};
            \filldraw [black] (C) circle (2pt) node[shift={(325:0.4)}, scale=0.5] {$(\a+1,\b-1)$};
            \filldraw [black] (D) circle (2pt) node[below, scale=0.7] {$(a,0)$};
            \draw[->, >=stealth, shorten >=3pt, shorten <=3pt, thick] (A) -- (B);
            \draw[->, >=stealth, shorten >=4pt, shorten <=4pt, thick] (A) -- (C);
        \end{tikzpicture}
        \caption{Linear term control}
        \label{fig:graph_b}
    \end{subfigure}
    \hfill 
    % --- (c) Nonlinear Control ---
     \begin{subfigure}[b]{0.32\textwidth}
        \centering
       \begin{tikzpicture}[scale=0.7]
            \draw[->, >=stealth] (-0.5, 0) -- (5, 0) node[right] {$x$};
            \draw[->, >=stealth] (0, -0.5) -- (0, 5) node[above] {$v$};
           % \node[below left] at (0,0) {$O$};
            
            \draw[thick] (0, 4) -- (4, 0);
            
          %  \foreach \p in {(0,0), (0.5,0), (0,0.5), (0.5,0.5), (4,0), (0,4)} {
          %      \filldraw [black] \p circle (2pt); }
            
%            \node at (2.2, 0.2) {$\dots$};
%            \node at (0.2, 2.2) {$\vdots$};
%            \node at (2.0, 1.3) {$\ddots$};

            \coordinate (A) at (1.5, 2.5);
            \coordinate (B) at (0.3, 0.8);
            \coordinate (C) at (0.1, 0.1);
            \coordinate (D) at (1.1, 1.2);
            \coordinate (E) at (2.1, 0.1);
            \coordinate (F) at (2.3, 0.8);
            \filldraw [black] (A) circle (2pt) node[above right, scale=0.7] {$(\a,\b)$};
            \draw [black] (B) circle (2pt);
            \draw [black] (C) circle (2pt);
            \draw [black] (D) circle (2pt);
            \filldraw [black] (E) circle (2pt);
            \filldraw [black] (F) circle (2pt);
            \draw[->, >=stealth, dashed, shorten >=3pt, shorten <=3pt, thick] (A) -- (B);
            \draw[->, >=stealth, dashed, shorten >=4pt, shorten <=4pt, thick] (A) -- (C);
            \draw[->, >=stealth, dashed, shorten >=3pt, shorten <=3pt, thick] (A) -- (D);
            \draw[->, >=stealth, shorten >=3pt, shorten <=3pt, thick] (C) -- (E);
            \draw[->, >=stealth, shorten >=3pt, shorten <=3pt, thick] (B) -- (F);
        \end{tikzpicture}
        \caption{Nonlinear term control}
        \label{fig:graph_c}
    \end{subfigure}

    \caption{Visualization of the norm structure and the associated term-wise controls.}
    \label{fig:main_framework}
\end{figure}

\subsubsection*{Graphical Illustration of the Control Mechanism}

The interplay between the weighted structure and the operator control is summarized in Figure \ref{fig:main_framework}. Precisely, 

\begin{itemize}

\item Figure \ref{fig:graph_a}: Weighted Norm Distribution. Each node represents the number of spatial and velocity derivatives, respectively. The values indicate the power of the weight. We observe that the weight increases with the order of derivatives, but vanishes for pure spatial derivatives, particularly at the highest order $(d+7, 0)$, to allow for the $L^2$ energy balance.

\item Figure \ref{fig:graph_b}: Linear Term Control. The transport term $v \cdot \nabla_x$ acts by increasing one spatial derivative while decreasing one velocity derivative. Due to our weight design, this shift gains additional weight, allowing the term to be bounded by the dissipation norm (left arrow). Simultaneously, the linearized operator is controlled by a norm with one fewer velocity derivative (right arrow). In the absence of velocity derivatives, all associated weights are non-positive. By applying Lemma \ref{lemma:Coerceivity of L}, we can reduce the weighted estimates to a weightless setting, which in turn allows us to gain control over the microscopic $\ip F$ parts. As for the macroscopic $\P F$ parts, their estimates are established via Proposition \ref{proposition:0-macro-L2-estimate}.

\item Figure \ref{fig:graph_c}: Nonlinear Term Control. Due to the presence of cubic and higher-order terms, we utilize Sobolev embeddings into $L^\infty$ over the torus $\mathbb{T}^d$. The high-order derivatives are distributed (dashed lines) such that lower-order terms are controlled via embedding (solid lines), effectively balancing the energy and dissipation norms to close the estimates.

\end{itemize}

%%%%%%%%%%%%%%%%%%%
%
%
%
%
%
%
%%%%%%%%%%%%%%%%%%%%555

\subsection{Notations}
For notational simplicity, we denote by $\langle \cdot, \cdot \rangle$ the standard $L^2$ inner product on $\mathbb{R}^d_v$, with its corresponding norm $\norm{\cdot}_{L^2_{v}}$. We further denote by $\normm{\cdot}_{L^p_{x}L^{q}_{v}}$ the $L^p$ norm over $x$ and $L^q$ norm over $v$ variables in the domain $\mathbb{T}^d \times \mathbb{R}^d$. Let $\a$, $\b$ denote multi-indices with length $\norm{\a}$, and $\norm{\b}$ respectively and we define
\begin{equation*}
\pt^{\a}_{\b} := \pt^{\a}_{x} \pt^{\b}_{v}, \qquad \pt^{\a} := \pt^{\a}_{x}, \qquad \pt_{\b} := \pt^{\b}_{v}.
\end{equation*}
If each component of $\b$ is not greater than that of $\a$'s, we denote it by $\b \le \a$. We also define $\b < \a$ if $\b \le \g$ and $\norm{\b} < \norm{\a}$. In addition, throughout the paper, we use the notation $A\lesssim B$ to denote the bound $A\le C B$ for some universal constant $C$. Similarly, $A\approx B$ to denote $A\lesssim B$ and $B\lesssim A$. 

\subsection{Organization of the Paper}

The remainder of this paper is organized as follows: Section \ref{sec-Pre} analyzes the properties of the dispersion function and operators in the weighted inhomogeneous setting, while Section \ref{sec-nonop} focuses on the nonlinear collision operator. Section \ref{sec-L2} implements the test function method for macroscopic control. Finally, Section \ref{pf of main thm} constructs the global weighted norm and completes the proof of global existence via a bootstrap argument, while Section \ref{pf of thm2} establishes the convergence to the equilibrium.

\section{Preliminaries}\label{sec-Pre}

\subsection{Perturbations}

Let $\mu = \pi^{-\frac{d}{2}}e^{-{|v|^2}}$ be the global Maxwellian. We shall construct global solutions to \eqref{LenardBalescu} of the perturbed form 
\begin{equation}\label{AnsatzF}
F = \mu + \sqrt{\mu} f. 
\end{equation}
In view of \eqref{kernelB}, we note that $B(v,v-v_{*};\nabla_{v} F)(v-v_{*}) = 0$, and therefore the Maxwellian $\mu$ is an exact solution to \eqref{LenardBalescu}. In addition, by using the above Ansatz for $F$, it follows that
\begin{equation} \label{LBeqs}
    \pt_t f + v \cdot \nabla_{x} f = L[f] + N(f)
\end{equation}
where
\begin{equation} \label{defLN}
\begin{aligned}
    L[f]:=& (\nabla_{v} - v) \cdot \int_{\R^{d}} B(v,v-v_{*};\nabla_{v} \mu) (\sqrt{\mu_{*}}(\nabla_{v} + v)f - \sqrt{\mu}((\nabla_{v} +v)f)_{*}) \sqrt{\mu_*}\;\dd v_{*}, \\
    N(f):=& (\nabla_{v} - v) \cdot \int_{\R^{d}} B(v,v-v_{*};\nabla_{v} F_f) (f_{*}\nabla_{v} f - f(\nabla_{v} f)_{*} )\sqrt{\mu_*}\;\dd v_{*} \\
    &+ (\nabla_{v} - v) \cdot \int_{\R^{d}} (B(v,v-v_{*};\nabla_{v} F_f)  - B(v,v-v_{*};\nabla_{v} \mu)) \\
    & \quad \quad \quad \quad \quad \quad \quad \times (\sqrt{\mu_{*}}(\nabla_{v} + v)f - \sqrt{\mu}((\nabla_{v} +v)f)_{*})\sqrt{\mu_*}\;\dd v_{*},
\end{aligned}
\end{equation}
collecting linear and nonlinear terms, respectively. In the above, $F_f = \mu + \sqrt{\mu} f$ as in \eqref{AnsatzF}. Our nonlinear analysis applies directly to the perturbed equation \eqref{LBeqs} with small initial perturbations $f_0(x,v)$. Observe that the linearized operator has a non-trivial kernel of the $d+2$-dimensional subspace of $L^2(\mathbb{R}^d)$ given by
\begin{equation}\label{ker-P}
\ker L = \operatorname{span} \left \{1,   v, |v|^{2} \right \}\sqrt{\mu}.
\end{equation}
In addition, an orthonormal basis for $\ker L$ is $\{\chi_{i}\}_{i=0}^{d+1}$, where
\begin{align}\label{base-hat-chi}
&\chi_{0} := \sqrt{{\mu}}, \quad
\chi_{i} := \sqrt{2}v_i\sqrt{{\mu}} \;\; (i=1,\cdots, d),\quad
\chi_{d+1} := \frac{2\norm{v}^2-d}{\sqrt{2d}}\sqrt{{\mu}}.
\end{align}
The orthogonal projection of $f$ onto $\ker L$ is denoted by
\begin{equation}\label{Pf-abc-hat}
  \P f  =  a[f] \chi_{0} + \sum_{i=1}^{d}b_{i}[f] \chi_{i} + c[f] \chi_{d+1},
\end{equation}
with coefficients
\begin{equation}\label{hat-abc-def}
a[f]:=\inn{\chi_{0}, f },\quad
b_i[f]:=\inn{\chi_{i}, f} \;\; (i=1,\cdots, d),\quad
c[f]:=\inn{\chi_{d+1}, f}.
\end{equation}
Let $(\mathbb{I}-\mathbf{P})f$ denote projection onto the orthogonal
complement of $\ker L $. 

\subsection{Operators}
For sake of convenience, for each function $F$, we introduce 
\begin{equation} \label{defcB}
\mathcal{B}_{\nabla_{v} F}[g] (v):= \int_{\R^{d}_{v_{*}}} B(v,v-v_{*};\nabla_{v} F) \sqrt{\mu_{*}} g_{*} \dd v_{*}.
\end{equation}
We may write the linear and nonlinear terms $L[g]$ and $N(g)$ in terms of $\mathcal{B}_{\nabla_{v} F}[\cdot]$. Indeed, we write the linearized operator $L$ as 
\begin{equation} \label{splitL}
L[g] = (\nabla_{v} -v) \cdot A (\nabla_{v} +v) g - (\nabla_{v} - v) \cdot (\sqrt{\mu} \mathcal{B}_{0}[(\nabla_{v} + v)g] ),
\end{equation}
where 
\begin{equation} \label{defAB}
\begin{aligned}
    A(v): &=  \mathcal{B}_{\nabla_{v} \mu}[\sqrt{\mu}] ,
   \qquad 
    \mathcal{B}_{0}[g](v) :=  \mathcal{B}_{\nabla_{v} \mu}[g] .
\end{aligned}
\end{equation}
Similarly, the nonlinear terms $N(g)$ can be written as 
\begin{equation}
\begin{aligned}
    N(g) = N(g,g,g)
\end{aligned}
\end{equation}
where
\begin{equation} \label{defNg123}
\begin{aligned}
    N(g_{1},g_{2},g_{3}) = & (\nabla_{v} - v) \cdot \Big( \mathcal{B}_{\nabla_{v} F_{g_{1}}}[g_{2}] \nabla_{v} g_{3} -\mathcal{B}_{\nabla_{v} F_{g_{1}}} [\nabla_{v} g_{2}] g_{3} \Big) \\
    & + (\nabla_{v} -v) \cdot \Big ( \mathcal{B}_{\nabla_{v} F_{g_{1}}}[\sqrt{\mu}] - \mathcal{B}_{\nabla_{v} \mu} [\sqrt{\mu}] \Big) (\nabla_{v} + v)g_{3} \\
    &- (\nabla_{v} -v) \cdot \Bigg(\sqrt{\mu} \Big( \mathcal{B}_{\nabla_{v} F_{g_{1}}} [(\nabla_{v} +v) g_{2}] - \mathcal{B}_{\nabla_{v} \mu} [(\nabla_{v} +v) g_{2}] \Big)\Bigg)
\end{aligned}
\end{equation}
with $F_g = \mu + \sqrt{\mu} g$.

\subsection{Energy Norms}\label{sec-norm}

In this section, we introduce the iterative energy and dissipation norms of $f$ that shall be used throughout the paper. Precisely, fix $N,M\ge 0$ and $r\in \R$, and introduce 
\begin{equation} \label{weighted norm definition}
\norm{f}^2_{N,M,r,L^2} := \sum_{\norm{\a} \le N, \norm{\b} \le M}\int_{\R^d} \norm{\inn{v}^{-r} \pt^{\a}_{\b} f}^{2} \dd v,\qquad \normm{f}^2_{N,M,r,L^2} := \int_{\T^d} \norm{f}^2_{N,M,r,L^2} \dd x, 
\end{equation}
for any scalar functions $f$, where $\pt^{\a}_{\b} = \pt^{\a}_{x} \pt^{\b}_{v}$ for any multi-indexes $\a,\b$. In view of \eqref{splitL}, we also introduce the $A$-norm of the vector field $\mathbf{f}$ as follows:
\begin{equation} \label{weighted A norm definition}
\begin{aligned}
\norm{\mathbf{f}}^2_{N,M,r,A} := \sum_{\norm{\a} \le N, \norm{\b} \le M}\int_{\R^d} \inn{v}^{-2r} \pt^{\a}_{\b} \mathbf{f} A \cdot \pt^{\a}_{\b} \mathbf{f} \dd v,\qquad \normm{\mathbf{f}}^2_{N,M,r,A} := \int_{\T^d} \norm{\mathbf{f}}^2_{N,M,r,A} \dd x,
\end{aligned}
\end{equation}
where the matrix $A$ is defined in \eqref{defAB}.
Next, we introduce the following dissipation norm
\begin{equation} \label{weighted D norm definition}
\begin{aligned}
\norm{f}^2_{N,M,r,D} := \norm{v f }^2_{N,M,r,A} + \norm{\nabla_{v} f}^2_{N,M,r,A}, \qquad \normm{f}^2_{N,M,r,D} := \int_{\T^d} \norm{f}^2_{N,M,r,D} \dd x.
\end{aligned}
\end{equation}
In particular, in case when $N,M,r$ are zeros, we simply write $\norm{\cdot}_{A} = \norm{\cdot}_{0,0,0,A}$ and $\norm{\cdot}_{D} = \norm{\cdot}_{0,0,0,D}$. Finally, we introduce the following iterative energy and dissipation functionals
\begin{align} \label{energy definition}
{e}_{N}[f](t) := & \sum_{N_{1}+N_{2} \le N} \normm{f(t)}_{N_{1},N_{2},-N+N_{1}+2N_{2},L^2}^{2},\\
\label{sup energy definition}
\mathscr{E}_{N}[f](t) := & \sup_{0\le s\le t} {e}_{N}[f](t),\\
\label{dissipation definition}
{d}_{N}[f](t) := & \sum_{N_{1}+N_{2} \le N}  \normm{f(t)}_{N_{1},N_{2},-N+N_{1}+2N_{2},D}^{2} ,\\
\label{integral dissipation definition}
\mathscr{D}_{N}[f](t) := & \int_{0}^{t}  {d}_{N}[f](s) \dd s.
\end{align}
The total energy functional is then defined by
\begin{equation} \label{normmm -definition}
\begin{aligned}
 \normmm{f}(t) :=&\sup_{s\in [0,t]}\mathscr{E}_{d+7}^{\frac{1}{2}}[f](s)  + \mathscr{D}_{d+7}^{\frac{1}{2}}[f](t).
\end{aligned}
\end{equation}
We emphasize that the energy and dissipation norms involve both decay and growth weights in $v$, since $N - N_1 - 2N_2$ may change the sign for $N_1 + N_2 \le N$. In particular, we note that the norm with top spatial derivatives $N_1 = N$ (and $N_2=0$) involves no $v$-weights, while the norm with top $v$-derivatives $N_2=N$ (and $N_1=0$) goes with weights $\langle v\rangle^{-N}$. The weights are distributed accordingly with derivatives $\langle v\rangle^{-|\a|}\pt^\a$ and $\langle v\rangle^{-2|\b|}\pt_\b$. On the other hand, as in the case with the Landau collision operator \cite{guo2002landau}, the dissipation norm naturally goes with the diffusion matrix $A$ which may become degenerate when $v$ is large, see \eqref{equivA}-\eqref{equivD} below.

\subsection{Linearized Collision Operator}

In this subsection, we study the linearized Lenard-Balescu collision operator. In fact, since the collision operator is autonomous in $x$ (i.e. only depending on $x$ through $f$), the results that were first established in \cite{duerinckx2023well} for the homogenous case carry over to the present inhomogenous case, which we shall recall below.  
 
\begin{lemma} \label{lem-disp}
(Lenard-Balescu dispersion function, \cite{duerinckx2023well}).
Let $V \in L^1 (\R^d)$.

(i) Non-degeneracy at Maxwellian: for all $k,v \in \R^{d}$
\begin{equation}
    \norm{\e(k,k\cdot v; \nabla_{v} \mu)} \approx_{V} 1.
\end{equation}

(ii) Non-degeneracy close to Maxwellian: Provided $g(x,\cdot)\in L^2(\R^{d})$ satisfies the following smallness condition, for some $r_0\ge0$, $\delta_0>0$, and some large enough constant $C_0$,

\begin{equation}
    \norm{\inn{v}^{-r_{0}} \inn{\nabla_{v}}^{\frac{3}{2}+\d_{0}}g(x,\cdot)}_{L^2_{v} } \le \frac{1}{C_0},
\end{equation}
we have for all $k,v\in\R^{d}$ and for $F_g = \mu + \sqrt{\mu} g$, 
\begin{equation}
\norm{\e(k,k\cdot v;\nabla_{v} F_g)}\,\approx_{V,\delta_0,r_0}\,1.
\end{equation}

(iii) Boundedness: For all multi-indices $\alpha>0$, $\beta >0$ for all $\delta>0$, and $r\ge0$, we have
\begin{equation} \label{epsilon - velocity derivative - estimate}
\norm{\pt_{\b}  \e(k,k\cdot v;\nabla_{v} F_g)}\,\lesssim_{V,\b, \delta,r}\, 1+\norm{\inn{v}^{-r}\inn{\nabla_{v}}^{|\b|+\frac{3}{2}+\delta}g(x,\cdot)}_{L^2_{v}},
\end{equation}
\begin{equation} \label
{epsilon - space and velocity derivative - estimate}
\norm{\pt_{\b}^{\a} \e(k,k\cdot v;\nabla_{v} F_g)}\,\lesssim_{V,\alpha,\b, \delta,r}\,\norm{\inn{v}^{-r}\inn{\nabla_{v}}^{|\b|+\frac{3}{2}+\delta}\pt^{\a} g(x,\cdot)}_{L^2_{v}},
\end{equation}
with $\pt_{\b}^{\a} = \pt_x^\a \pt_v^\b$.  

(iv) Boundedness of difference: For all multi-indices $\alpha, \beta \ge 0$, for all $\delta>0$, and $r\ge0$, we have
\begin{equation} \label{epsilon differemce - velocity derivative - estimate}
\norm{\pt_{\b}^{\a} (\e(k,k\cdot v;\nabla_{v} F_g) - \e(k,k\cdot v;\nabla_{v} \mu))}\,\lesssim_{V,\alpha,\b, \delta,r}\,\norm{\inn{v}^{-r}\inn{\nabla_{v}}^{|\b|+\frac{3}{2}+\delta}\pt^{\a} g(x,\cdot)}_{L^2_{v}},
\end{equation}
with $\pt_{\b}^{\a} = \pt_x^\a \pt_v^\b$.
\end{lemma}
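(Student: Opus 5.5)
We prove the four items of Lemma \ref{lem-disp} by reducing everything to one-dimensional integrals along $k$. Fix $k\neq0$, write $\hat k=k/|k|$ and $u=\hat k\cdot v$. Because $\frac{1}{y-i0}=\mathrm{p.v.}\frac1y+i\pi\delta(y)$, the integral term in \eqref{def-efunction} becomes
\begin{equation*}
\widehat V(k)\Big(\mathrm{p.v.}\!\int_{\R}\frac{\phi_{\hat k}'(w)}{u-w}\,\dd w+i\pi\,\phi_{\hat k}'(u)\Big),\qquad \phi_{\hat k}(w):=\int_{\hat k\cdot v_*=w}F(v_*)\,\dd\sigma .
\end{equation*}
Here $\phi_{\hat k}$ is the marginal of $F$ in the direction $\hat k$, so up to constants the integral term equals $\widehat V(k)\,(H\phi'_{\hat k}+i\pi\phi'_{\hat k})(u)$, where $H$ is the Hilbert transform. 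The key point is that $\e$ depends on $v$ only through $u=\hat k\cdot v$, and the dependence on $|k|$ sits entirely in the bounded factor $\widehat V(k)$, since $|\widehat V|\le\|V\|_{L^1}$.

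\textbf{Item (i).} For $\mu$ the marginal is a one-dimensional Gaussian $\phi(w)=\pi^{-1/2}e^{-w^2}$, so $\phi'(w)=-2w\phi(w)$. Consider the imaginary part of $\e$. It vanishes only at $u=0$, and at that point the real part is $1+c\widehat V(k)\int\frac{\phi'(w)}{-w}\dd w=1+2c\widehat V(k)>1$, because $V$ is positive definite and so $\widehat V\ge0$.

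\begin{itemize}
\item \emph{Bounded $u$.} Combining the imaginary part away from $u=0$ with the real part near $u=0$ gives $|\e|\gtrsim1$ on each compact set of $u$.
\item \emph{Large $|u|$.} The real part tends to $1$ as $|u|\to\infty$, and this is uniform in $\widehat V\le\|V\|_{L^1}$.
\item \emph{Small $|k|$.} Since $\widehat V$ is bounded, the relevant range is compact, so a continuity and compactness argument in $(\widehat V(k),u)$ gives the uniform lower bound.
\end{itemize}

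The upper bound follows from the boundedness of $H\phi'$ and $\phi'$.

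\textbf{Items (iii) and (iv).} Derivatives $\pt_\b$ in $v$ act as $\hat k^\b\pt_u^{|\b|}$, so
\begin{equation*}
\pt_\b\e=\widehat V(k)\,\hat k^\b\,(H+i\pi)\phi_{\hat k}^{(|\b|+1)}(u).
\end{equation*}
Spatial derivatives $\pt^\a$ pass onto $g$, because $\mu$ does not depend on $x$. This is also why the $\mu$-contribution drops out when $\a>0$, and likewise in the difference estimate (iv).

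We control $\|(H+i\pi)h\|_{L^\infty(\R)}\lesssim\|h\|_{H^{1/2+\delta}(\R)}$. Here $H$ is unitary on $H^s$, and $H^{1/2+\delta}$ embeds into $L^\infty$. It then remains to bound the marginal of $\sqrt\mu\,\pt^\a g$ in $H^{|\b|+1/2+1+\delta}(\R)$ by $\|\inn{v}^{-r}\inn{\nabla_v}^{|\b|+3/2+\delta}\pt^\a g\|_{L^2_v}$.

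This is the Fourier slice theorem: the one-dimensional Fourier transform of the marginal at frequency $\xi$ equals $\widehat{\sqrt\mu\,g}(\xi\hat k)$. The multiplier $\inn{\xi}^{s}$ is therefore $\inn{\nabla_v}^s$ restricted to a line. Restricting to a line costs $(d-1)/2$ derivatives in a naive $L^2$ trace, but we avoid that cost. We bound $\int\inn{\xi}^{2s}|\widehat{G}(\xi\hat k)|^2\dd\xi$ by using the Gaussian factor, which makes $\widehat{\sqrt\mu g}$ smooth in the transverse directions. Concretely, we write $\sqrt\mu\,g=\inn{v}^{r}\sqrt\mu\cdot\inn{v}^{-r}g$ and estimate the restriction in $L^\infty_v$ of a Schwartz weight times $g$ in a Sobolev norm.

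With $s=|\b|+3/2+\delta$, the total count is: $|\b|+1$ derivatives from $\phi^{(|\b|+1)}$, plus $1/2+\delta$ for the $L^\infty$ embedding. The $\langle v\rangle^{-r}$ weight with arbitrary $r$ is absorbed by the Gaussian. Product rules distribute the derivatives of $\pt^\a_\b$ onto these factors.

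\textbf{Item (ii).} Write $\e(\nabla F_g)=\e(\nabla\mu)+(\e(\nabla F_g)-\e(\nabla\mu))$. By (iv) with $\a=\b=0$, the difference is bounded by $C\|\inn{v}^{-r_0}\inn{\nabla_v}^{3/2+\d_0}g\|_{L^2_v}\le C/C_0$. By (i), $|\e(\nabla\mu)|\approx1$. Choosing $C_0$ large therefore gives $|\e(\nabla F_g)|\approx1$.

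The main obstacle is the uniform-in-$\hat k$ trace and marginal estimate. It requires getting exactly $3/2+\delta$ derivatives, not $d/2$-dependent ones, while using only the negative weight $\inn{v}^{-r}$; the Gaussian factor $\sqrt\mu$ is what makes this possible. A secondary difficulty is making the lower bound in (i) uniform over both large $|u|$ and the full range of $\widehat V(k)$.
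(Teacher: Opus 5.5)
Your proof is correct, but it is not the paper's route. The paper does not prove (i)--(iii) itself: it imports them from Lemma~2.1 of \cite{duerinckx2023well}, noting only that $\varepsilon$ is defined pointwise in $x$. It gets (iv) by observing that $\varepsilon(k,k\cdot v;\nabla_vF_g)-\varepsilon(k,k\cdot v;\nabla_v\mu)$ is the same dispersion integral with $F$ replaced by $\sqrt{\mu}\,g$, so Step~2 of that proof applies verbatim.

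You instead give a self-contained argument. You write $\varepsilon-1=\pi\widehat V(k)\,(H+i)\phi_{\hat k}'(\hat k\cdot v)$ via one-dimensional marginals and prove (i) by the explicit Gaussian computation. You prove (iii)--(iv) from $H^{1/2+\delta}(\R)\subset L^\infty$ and a Fourier-slice bound in which the Gaussian localization avoids any $(d-1)/2$ loss. You then deduce (ii) from (i) and (iv) with $\alpha=\beta=0$ rather than citing it. For (iv), your argument and the paper's remark coincide in substance.

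The paper's route buys brevity. Yours shows where the exponent $\frac32+\delta$ comes from: $|\beta|+1$ derivatives on the marginal plus $\frac12+\delta$ for the embedding. It also shows why $x$-derivatives simply land on $g$. Finally, it correctly exposes that (i) needs $\widehat V\ge 0$, which the stated hypothesis $V\in L^1$ omits. Indeed $\varepsilon(k,0;\nabla_v\mu)=1+2\widehat V(k)$, so (i) fails for any $V\in L^1$ with $\widehat V(k)=-\frac12$ somewhere. You are right to invoke the symmetry and positive definiteness assumed in Theorem~\ref{theo-main}.

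Two points should be tightened.

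In (i), your three bullets are really one compactness argument. Set $\lambda=\widehat V(k)\in[0,\|V\|_{L^1}]$ and $z(u)=\pi(H+i)\phi'(u)$, which is independent of $\hat k$ by rotation invariance of $\mu$. Then $1+\lambda z(u)$ can vanish only if $z(u)$ is real and negative. But $z$ is real only at $u=0$, where $z(0)=2>0$, and $z(u)\to0$ as $|u|\to\infty$. Small $|k|$ plays no separate role.

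For the marginal bound, replace the sentence about ``the restriction in $L^\infty_v$'' by the following. By Cauchy--Schwarz in the variables transverse to $\hat k$, for $n>\frac{d-1}{2}$,
$\|\phi_{\hat k}[\sqrt{\mu}\,g]\|_{H^s(\R)}=\|\phi_{\hat k}[\inn{\nabla_v}^s(\sqrt{\mu}\,g)]\|_{L^2(\R)}\lesssim\|\inn{v}^{n}\inn{\nabla_v}^s(\sqrt{\mu}\,g)\|_{L^2_v}$.
Then apply the standard weighted commutator estimate
$\|\inn{v}^{n}\inn{\nabla_v}^s(\sqrt{\mu}\,g)\|_{L^2_v}\lesssim_{n,s,r}\|\inn{v}^{-r}\inn{\nabla_v}^s g\|_{L^2_v}$.
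This is uniform in $\hat k$.
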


\begin{proof} For the (i),(ii) and (iii), the lemma were established in lemma 2.1 of \cite{duerinckx2023well}, upon noting that the dispersion function $\e(k,k\cdot v;\nabla_{v} F_g)$ is defined pointwise in $x$. Finally, as for the (iv), recalling $F_g = \mu + \sqrt{\mu} g$, we note that 
\begin{align*}
\e(k,k\cdot v;\nabla_{v} F_g) - \e(k,k\cdot v;\nabla_{v} \mu) = \widehat{V}(k) \lim_{\gamma\to 0^+}\int_{\R^{d}} \frac{k \cdot \nabla_{v_{*}} (\sqrt{\mu}g)_{*}}{k\cdot (v-v_{*})-i\gamma} \dd v_{*}.
\end{align*}
The above expression depends only on $g$. Thus, property (iv) follows by applying the same argument used in Step 2 of the proof of Lemma 2.1 in \cite{duerinckx2023well}.
\end{proof}

\begin{lemma} \label{lemma:A - properties}
(Properties of $A$, \cite{duerinckx2023well}) 
Let $V\in L^1(\R^d)\cap\dot H^\frac12(\R^{d})$ be symmetric.
\begin{enumerate}
\item  {Coercivity and boundedness:} For all $v,e\in\R^{d}$,
\begin{equation*}
e\cdot A(v)e \,\approx_V\,\langle v\rangle^{-1} |P_v^\bot e|^2+\langle v\rangle^{-3}|P_ve|^2,
\end{equation*}
in terms of orthogonal projections $P_v$ and $P_v^\bot$ onto $v$ and $v^\bot$, namely 
\begin{equation}\label{eq:def-projv}
P_v:=\tfrac{v}{|v|}\otimes\tfrac{v}{|v|},\qquad P^\bot_v=\mathbb{I}-P_v.
\end{equation}
\item  {Smoothness:} the matrix 
$A$ belongs to $C^\infty_b(\R^{d})$ and satisfies for all $v\in\R^{d}$ and $\b \ge0$,
\begin{equation}\label{eq:smooth-A}
|\nabla_{v}^\b A(v)|\,\lesssim_{V,\b}\,\langle v\rangle^{-1},\qquad |\nabla_{v}^\b (A(v)  v)| \,\lesssim_{V,\b}\, \langle v\rangle^{-2}.
\end{equation}
In particular, for all vector fields $\mathbf{h}_1,\mathbf{h}_2$, and $\alpha\ge0$,
\begin{equation}\label{eq:hnabAh}
\norm{\int_{\R^{d}}\mathbf{h}_1\cdot(\nabla_{v}^\alpha A)\mathbf{h}_2}\lesssim\Big(\int_{\R^{d}}\mathbf{h}_1\cdot A\,\mathbf{h}_1\Big)^\frac12\Big(\int_{\R^{d}}\mathbf{h}_2\cdot A\,\mathbf{h}_2\Big)^\frac12
\end{equation}
\end{enumerate}
\end{lemma}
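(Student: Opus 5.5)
The statement to prove is Lemma \ref{lemma:A - properties}, about $A(v)=\mathcal{B}_{\nabla_v\mu}[\sqrt{\mu}](v)$. The plan is to write $A$ explicitly and reduce every claim to a comparison with the Landau diffusion matrix. Inserting \eqref{kernelB} into \eqref{defcB} gives
\begin{equation*}
A(v)=\int_{\R^d}(k\otimes k)\frac{|\widehat V(k)|^2}{|\e(k,k\cdot v;\nabla_v\mu)|^2}\int_{\R^d}\delta(k\cdot(v-v_*))\,\mu_*\,\dd v_*\,\dd k .
\end{equation*}
The inner integral is the marginal of the Gaussian along $k$. Decompose $v_*$ parallel and perpendicular to $\hat k=k/|k|$; then it equals $|k|^{-1}\pi^{-1/2}e^{-(\hat k\cdot v)^2}$. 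So
\begin{equation*}
A(v)=\pi^{-1/2}\int (k\otimes k)\,|k|^{-1}\,\frac{|\widehat V(k)|^2}{|\e|^2}\,e^{-(\hat k\cdot v)^2}\,\dd k .
\end{equation*}

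\textbf{Coercivity and boundedness.} By Lemma \ref{lem-disp}(i), $|\e|\approx 1$, so as a quadratic form $A(v)$ is comparable to
\begin{equation*}
\tilde A(v)=\int (k\otimes k)\,|k|^{-1}|\widehat V(k)|^2 e^{-(\hat k\cdot v)^2}\dd k .
\end{equation*}
Passing to polar coordinates $k=\rho\omega$ gives
\begin{equation*}
e\cdot\tilde A e=c_V\int_{S^{d-1}}(\omega\cdot e)^2e^{-(\omega\cdot v)^2}\,\dd\omega ,\qquad c_V=\int_0^\infty\rho^{d}\,\overline{|\widehat V|^2}(\rho)\,\dd\rho ,
\end{equation*}
where $\overline{|\widehat V|^2}$ is the spherical average. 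If $\widehat V$ is not radial, I bound $|\widehat V(\rho\omega)|^2$ above and below after integrating in $\rho$; alternatively I argue directly, using positivity and symmetry to keep the angular weight bounded away from zero. The constant $c_V\approx\int|k|\,|\widehat V|^2\,\dd k=\|V\|_{\dot H^{1/2}}^2$ is finite because $V\in\dot H^{1/2}$. The sphere integral is the standard Landau computation. Write $\omega=(\cos\theta)\hat v+\sin\theta\,\eta$; the weight $e^{-|v|^2\cos^2\theta}$ concentrates on a band of width $\langle v\rangle^{-1}$ around the equator $\omega\perp v$. The band has measure $\sim\langle v\rangle^{-1}$. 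On it $(\omega\cdot e)^2\approx|P_v^\perp e|^2$ on average over $\eta$, while the $P_ve$ component gains $\cos^2\theta\sim\langle v\rangle^{-2}$. This gives $\langle v\rangle^{-1}|P^\perp_ve|^2+\langle v\rangle^{-3}|P_ve|^2$ up to constants. Bounded $|v|$ is handled by positivity and continuity.

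\textbf{Smoothness.} Differentiating in $v$ falls either on $e^{-(\hat k\cdot v)^2}$ or on $|\e|^{-2}$. Derivatives of the Gaussian factor produce polynomials in $\hat k\cdot v$ times the same Gaussian. These are still localized to the band and give the same $\langle v\rangle^{-1}$ bound after integration. Derivatives of $\e(k,k\cdot v;\nabla\mu)$ are bounded by Lemma \ref{lem-disp}(iii) with $g=0$, combined with the lower bound (i). For $A(v)v$, the factor $k\cdot v=|k|(\hat k\cdot v)$ appears, and $(\hat k\cdot v)e^{-(\hat k\cdot v)^2}$ effectively restricts to the band with an extra $\langle v\rangle^{-1}$ gain. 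This gives $\langle v\rangle^{-2}$, as in the Landau case where $A v$ only sees the $\langle v\rangle^{-3}$ direction times $|v|$.

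\textbf{Estimate \eqref{eq:hnabAh}.} Here $\nabla^\alpha A$ is a symmetric matrix whose quadratic form has the same band structure as $A$: it is an integral of $k\otimes k$ against a kernel $m_\alpha$ with $|m_\alpha|\lesssim$ the band weight. So pointwise
\begin{equation*}
|h_1\cdot(\nabla^\alpha A)h_2|\le\Big(\int|k\cdot h_1|^2|m_\alpha|\Big)^{1/2}\Big(\int|k\cdot h_2|^2|m_\alpha|\Big)^{1/2}\lesssim (h_1\cdot Ah_1)^{1/2}(h_2\cdot Ah_2)^{1/2}.
\end{equation*}
This is Cauchy–Schwarz in $k$, using $|m_\alpha|\lesssim$ the weight of $A$, where I use a slightly wider Gaussian, which is harmless. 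Cauchy–Schwarz in $v$ then finishes.

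The main obstacle is the anisotropic lower bound: tracking the $\langle v\rangle^{-3}$ versus $\langle v\rangle^{-1}$ directions uniformly, and ensuring that derivative kernels are dominated by the same anisotropic structure rather than only by $\langle v\rangle^{-1}$.
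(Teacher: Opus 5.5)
\textbf{Gap: the bound $|\nabla_v^\beta(A(v)v)|\lesssim\langle v\rangle^{-2}$.} The paper gives no proof of this lemma and only cites Lemma 2.2 of Duerinckx--Winter, so your direct route is an alternative rather than a reproduction. It starts from $A(v)=\pi^{-1/2}\int|k|\,|\widehat V(k)|^2|\varepsilon|^{-2}(\hat k\otimes\hat k)e^{-(\hat k\cdot v)^2}\,\mathrm{d}k$, with $\varepsilon$ depending on $v$ only through $u=\hat k\cdot v$. This does correctly give the two-sided anisotropic bound and $|\nabla_v^\beta A|\lesssim\langle v\rangle^{-1}$. Your pointwise Cauchy--Schwarz in $k$ with a wider Gaussian is also a correct proof of \eqref{eq:hnabAh}. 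The problem is the second bound in \eqref{eq:smooth-A}.

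The scalar factor $(\hat k\cdot v)e^{-(\hat k\cdot v)^2}$ gives no gain at all. On the band $|\hat k\cdot v|\lesssim 1$ it is of order one, so your band argument only yields $\langle v\rangle^{-1}$. Write $\hat k=t\hat v+\sqrt{1-t^2}\,\eta$ with $t=\hat k\cdot\hat v=u/|v|$ and $\eta\in S^{d-2}\subset v^\perp$. The gain then comes from the components of the vector $\hat k$ that carry a factor $t$, while the purely $v^\perp$ components must cancel.

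For $\beta=0$ this cancellation is the rotational invariance you allude to. One has $A(v)=\lambda_1(|v|)P_v+\lambda_2(|v|)P_v^\perp$ with $\lambda_1\approx\langle v\rangle^{-3}$ and $\lambda_2\approx\langle v\rangle^{-1}$, so $A(v)v=\lambda_1 v$. For $\beta\neq 0$ your sketch says nothing, and no pointwise bound can work. For a unit vector $e\perp v$,
\[
(e\cdot\nabla_v)(A(v)v)=A(v)e+\big((e\cdot\nabla_v)A\big)v .
\]
Here $A(v)e=\lambda_2 e$ alone has size $\langle v\rangle^{-1}$. The decay comes only from the exact identity $\big((e\cdot\nabla_v)A\big)v=(\lambda_1-\lambda_2)e$.

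\textbf{How to close it in your framework.} Set $\Psi(\rho,u)=\pi^{-1/2}u e^{-u^2}|\varepsilon|^{-2}$. Then
\[
\nabla_v^\beta(A(v)v)=\int|k|\,|\widehat V|^2\,\hat k\otimes\hat k^{\otimes|\beta|}\,\partial_u^{|\beta|}\Psi(|k|,\hat k\cdot v)\,\mathrm{d}k .
\]
Components with at least one factor $t$ are $O(|v|^{-2})$. The purely $v^\perp$ component is a tensor of order $|\beta|+1$ in $\eta$.
\begin{itemize}
\item When $|\beta|$ is even, this order is odd, so the component vanishes after averaging over $\eta$.
\item When $|\beta|$ is odd, write $\partial_u^{|\beta|}\Psi(\rho,|v|t)=|v|^{-1}\partial_t[\partial_u^{|\beta|-1}\Psi(\rho,|v|t)]$. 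Integrate by parts in $t$ against the weight $(1-t^2)^{(|\beta|+d-2)/2}$, which vanishes at $t=\pm1$. This gains the missing powers of $|v|$.
\end{itemize}
Without some such cancellation step, this part of the lemma is not proved.

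\textbf{Radial symmetry.} All of your arguments, including this cancellation, use that $\widehat V$ is radial. That is what ``symmetric'' means in this paper (cf.\ the constant $c_V$ in the proof of Lemma \ref{lemma:T - upper bound}). Your fallback for non-radial $V$ is not justified. For a merely even $V$, the angular density $\int_0^\infty\rho^d|\widehat V(\rho\omega)|^2\,\mathrm{d}\rho$ need not be bounded above or below on $S^{d-1}$.
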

\begin{proof}
This lemma is Lemma 2.2 of \cite{duerinckx2023well}.
\end{proof}

\begin{lemma}\label{lemma:B_0 - properties}
(Properties of $\mathcal{B}_{0}$)
Let $V\in L ^1\cap \dot H^\frac12(\R^{d})$ be symmetric and positive definite.
\begin{enumerate}
\item {Boundedness:} For all $r\ge0$,
\begin{equation*}
\norm{\inn{v}^r\sqrt{\mu}\mathcal{B}_{0}[g]}_{ L^2_{v}}
\,\lesssim_{V,r}\, \norm{\inn{v}^{-r}g}_{ L ^2_{v}},
\end{equation*}
\item {Improved regularity:} Further assume $V\in\dot H^2(\R^{d})$ and $x V\in L ^2(\R^{d})$.
Then, for all~$\b >0$ and~$r\ge0$,
\begin{equation*}
\norm{\inn{v}^r\sqrt{\mu}\nabla_{v}^\b\mathcal{B}_{0}[g]}_{ L^2_{v}}
\,\lesssim_{V,\b,r}\,\sum_{\gamma<\b}\norm{\inn{v}^{-r}\nabla_{v}^\gamma g}_{ L^2_{v}}.
\end{equation*}
\end{enumerate}
\end{lemma}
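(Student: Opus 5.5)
Both statements are pointwise in $x$, since $\mathcal{B}_0[g](v)=\int B(v,v-v_*;\nabla_v\mu)\sqrt{\mu_*}\,g_*\,\dd v_*$ involves only the Maxwellian kernel and no $x$-dependence. So I would prove them for functions of $v$ alone and then note that the bounds hold for each fixed $x$. I expect this to reduce essentially to the homogeneous estimates of \cite{duerinckx2023well} (their Lemma 2.3). Below I sketch how I would establish them directly.

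\emph{Boundedness.} By Lemma \ref{lem-disp}(i), $|\e(k,k\cdot v;\nabla_v\mu)|^{-2}\approx 1$. Hence
\[
|B(v,w;\nabla_v\mu)|\lesssim \int |k|^2|\widehat V(k)|^2\delta(k\cdot w)\,\dd k\lesssim \frac{1}{|w|}\int |k|^2|\widehat V(k)|^2\,\dd\sigma_{w^\perp}(k)\lesssim_V |w|^{-1},
\]
using $V\in\dot H^{1/2}$ together with the radial/spherical averaging (this is exactly the Landau-type bound). It follows that
\[
|\mathcal{B}_0[g](v)|\lesssim \int |v-v_*|^{-1}\sqrt{\mu_*}\,|g_*|\,\dd v_*.
\]
Write $\sqrt{\mu_*}|g_*| = \big(\sqrt{\mu_*}\inn{v_*}^r\big)\big(\inn{v_*}^{-r}|g_*|\big)$ and apply Cauchy--Schwarz in $v_*$. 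Since $d\ge2$, the integral $\int |v-v_*|^{-2}\mu_*\inn{v_*}^{2r}\,\dd v_*$ is locally integrable and bounded by $\inn{v}^{-2}$, which gives
\[
|\mathcal{B}_0[g](v)|\lesssim \norm{\inn{v}^{-r}g}_{L^2_v}.
\]
Multiplying by $\inn{v}^r\sqrt\mu\in L^2$ then yields (1).

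\emph{Improved regularity.} The idea is to move $v$-derivatives off the kernel. First use the change of variables $v_*=v-w$:
\[
\mathcal{B}_0[g](v)=\int B(v,w;\nabla_v\mu)\,(\sqrt\mu g)(v-w)\,\dd w .
\]
Differentiating $\nabla_v^\beta$ produces, by Leibniz, two kinds of terms. The first kind has derivatives falling on the first argument of $B$ through $\e(k,k\cdot v;\nabla_v\mu)^{-2}$. These are smooth and bounded by Lemma \ref{lem-disp}(iii) applied with $g=0$, since the Maxwellian is smooth. They keep the $|w|^{-1}$ structure and are bounded as in part (1) with $g$ undifferentiated. The second kind has derivatives falling on $\sqrt\mu g$. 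For these, the total count of derivatives on $g$ must be $<|\beta|$, so at least one derivative has to be absorbed into the kernel. For the term where all $|\beta|$ derivatives hit $g$, I would integrate one derivative back by parts onto $B(v,w)$ in the $w$-variable. Formally $\nabla_w B(v,w)$ involves $\int k\otimes k\,(k\cdot)\,|\widehat V|^2\,\delta'(k\cdot w)\,|\e|^{-2}\,\dd k$, which behaves like $|w|^{-2}$ times moments of $|k|^{3}|\widehat V|^2$ and $k$-derivatives of $\widehat V$. This is where the hypotheses $V\in\dot H^2$ and $xV\in L^2$ (i.e.\ $\nabla_k\widehat V\in L^2$) enter. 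The key step is the bound $|\nabla_w B(v,w)|\lesssim |w|^{-2}$, which is still locally integrable in $d\ge2$ only marginally (for $d=2$, $|w|^{-2}$ fails). I would therefore instead use Cauchy--Schwarz with a better splitting: handle the Gaussian weight via $\sqrt{\mu(v)}$ outside, and treat $|w|^{-2}$ paired with $L^2$ in $v_*$, which is fine via Hardy's inequality in $d\ge3$. For $d=2$, I would fall back on the cancellation $B(v,w)w=0$, whose derivative gives extra structure, and follow \cite{duerinckx2023well}.

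Weights are handled as before: $\inn{v}^r\sqrt\mu$ absorbs any polynomial growth, and $\inn{v_*}^{r}\sqrt{\mu_*}$ compensates $\inn{v_*}^{-r}$ on $g$. The main obstacle is the derivative transfer onto the kernel, in particular controlling the $\delta'(k\cdot w)$ contribution and the $k$-derivatives of $\widehat V\,|\e|^{-2}$. This is exactly what uses $\dot H^2$ and $xV\in L^2$, and I would import that computation from \cite{duerinckx2023well}.
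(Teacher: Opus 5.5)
Your reduction matches the paper exactly: the paper's proof consists only of noting that $\mathcal{B}_0$ involves the Maxwellian kernel alone and citing Lemma~2.3 of \cite{duerinckx2023well}. The self-contained argument you add, however, fails in $d=2$, which the paper's range $d\ge2$ includes.

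\textbf{Part (1).} You claim $\int|v-v_*|^{-2}\mu_*\inn{v_*}^{2r}\,\dd v_*<\infty$ ``since $d\ge2$''. This is false for $d=2$, because $|w|^{-2}\notin L^1_{\mathrm{loc}}(\R^2)$. The pointwise bound $|\mathcal{B}_0[g](v)|\lesssim\norm{\inn{v}^{-r}g}_{L^2_v}$ that you derive from it is in fact false in $\R^2$:
\begin{itemize}
\item $\mathrm{tr}\,B(v,w;\nabla_v\mu)\approx|w|^{-1}$;
\item convolution with $|w|^{-1}$ (the Riesz potential $I_1$ in $\R^2$) does not map $L^2$ into $L^\infty$;
\item a concrete counterexample is $g=|v|^{-1}(\log\frac{1}{|v|})^{-1}\mathbf{1}_{|v|<1/2}$, evaluated at $v=0$.
\end{itemize}
The estimate itself still holds, with a different argument:
\begin{itemize}
\item split $|w|^{-1}=|w|^{-1}\mathbf{1}_{|w|\le1}+|w|^{-1}\mathbf{1}_{|w|>1}$;
\item handle the first piece with Young's inequality, using that it lies in $L^1(\R^2)$ and that $\norm{\sqrt\mu g}_{L^2_v}\lesssim\norm{\inn{v}^{-r}g}_{L^2_v}$;
\item bound the second piece pointwise by $\norm{\sqrt\mu g}_{L^1_v}\lesssim\norm{\inn{v}^{-r}g}_{L^2_v}$;
\item multiply by $\inn{v}^r\sqrt\mu\in L^2\cap L^\infty$.
\end{itemize}
Alternatively, use Hardy--Littlewood--Sobolev, as the paper does in the proof of \eqref{T - upper 2}.

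\textbf{Part (2).} The sketch does not stand on its own here either. After moving one derivative onto the kernel, you face $\nabla_wB(v,w)$, which is homogeneous of degree $-2$ in $w$.
\begin{itemize}
\item For $d\ge3$, Young's inequality handles this, since $|w|^{-2}\mathbf{1}_{|w|\le1}\in L^1(\R^d)$. Cauchy--Schwarz against $L^2$ would instead need $|w|^{-4}\in L^1_{\mathrm{loc}}$, i.e.\ $d\ge5$.
\item For $d=2$, the kernel is of Calder\'on--Zygmund type. What gives $L^2$-boundedness is that $\nabla_wB(v,\cdot)$ has mean zero on circles. This follows from $B(v,\cdot)$ being homogeneous of degree $-1$ (since $\delta(k\cdot\lambda w)=\lambda^{-1}\delta(k\cdot w)$), not from the null-vector identity $B(v,w)w=0$ that you invoke.
\item In addition, $B$ depends on $v$ through $\e(k,k\cdot v;\nabla_v\mu)$, so this is a variable-kernel singular integral and needs its own argument.
\end{itemize}
As written, your treatment of (2) in $d=2$ is a pointer to \cite{duerinckx2023well}. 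That is acceptable as a citation, which is all the paper gives, but it is not a proof.
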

\begin{proof}
This lemma is Lemma 2.3 of \cite{duerinckx2023well}.
\end{proof}

In view of the norm $\norm{\mathbf{h}}_{A}$ defined as in \eqref{weighted A norm definition}, using the coercivity and boundedness established in Lemma \ref{lemma:A - properties}, we note that any vector field $\mathbf{h}$, 
\begin{equation}\label{eq:equiv-L2A}
\norm{\mathbf{h}}_{A} \approx \norm{\inn{v}^{-\frac12}P_v^\perp \mathbf{h}}_{L^2_{v}}+\norm{\inn{v}^{-\frac32}P_v \mathbf{h}}_{L^2_{v}}.
\end{equation}

Recalling the decomposition \eqref{splitL} and combining Lemmas \ref{lemma:A - properties}-\ref{lemma:B_0 - properties}, we obtain the following corollary.

\begin{corollary}(Upper bound of $L$)\label{corollary:Upper bound of L}
Let $V\in L ^1(\R^d)\cap \dot H^\frac12(\R^{d})$ be symmetric, and let $\P$ be the orthogonal projection defined as in \eqref{Pf-abc-hat}. Then, there holds
\begin{equation}\label{upboundLg}
\begin{aligned}
\norm{\int_{\R^d} f L[g] \dd v} \lesssim \norm{\ip f}_{D} \norm{\ip g}_{D}.
\end{aligned}
\end{equation}

\end{corollary}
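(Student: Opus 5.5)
The plan is to first put $\int f L[g]\,\dd v$ into symmetric weak form, so that it reads as a bilinear form in $(\nabla_v+v)f$ and $(\nabla_v+v)g$. Then I would bound each piece with Lemma \ref{lemma:A - properties} and Lemma \ref{lemma:B_0 - properties}, and finally use the kernel structure to replace $f,g$ by their microscopic parts.

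\textbf{Weak form.} Starting from \eqref{splitL}, integrate by parts in $v$. The adjoint of $(\nabla_v - v)\cdot$ is $-(\nabla_v+v)$, so
\begin{equation*}
\int_{\R^d} f L[g]\,\dd v = -\int_{\R^d} (\nabla_v+v)f\cdot A(\nabla_v+v)g\,\dd v + \int_{\R^d} (\nabla_v+v)f\cdot \sqrt{\mu}\,\mathcal{B}_0[(\nabla_v+v)g]\,\dd v .
\end{equation*}
Alternatively, from \eqref{defLN} the whole form can be written as
\begin{equation*}
-\tfrac12\iint \Big(\sqrt{\mu_*}(\nabla_v+v)f-\sqrt{\mu}((\nabla_v+v)f)_*\Big)\cdot B\,\Big(\sqrt{\mu_*}(\nabla_v+v)g-\sqrt{\mu}((\nabla_v+v)g)_*\Big)\,\dd v\,\dd v_*,
\end{equation*}
using the symmetry $B(v,v-v_*)=B(v_*,v_*-v)$. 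This holds because $\e(k,k\cdot v;\nabla_v\mu)$ only enters through $k\cdot v=k\cdot v_*$ on the support of $\delta(k\cdot(v-v_*))$.

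\textbf{Reduction to microscopic parts.} The symmetric form is a positive semidefinite symmetric bilinear form $\langle f,-Lg\rangle$. By \eqref{ker-P}, $L$ annihilates $\ker L=\mathrm{Ran}\,\P$, and by symmetry $\langle \P f, Lg\rangle=\langle L\P f,g\rangle=0$. Hence
\begin{equation*}
\int f L[g]\,\dd v=\int \ip f\, L[\ip g]\,\dd v .
\end{equation*}
It then suffices to prove the bound $|\int f L[g]|\lesssim \norm{f}_D\norm{g}_D$ for general $f,g$ and apply it to $\ip f,\ip g$.

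\textbf{The $A$-term.} By Cauchy--Schwarz in the $A$-inner product,
\begin{equation*}
\Big|\int (\nabla_v+v)f\cdot A(\nabla_v+v)g\Big|\le \norm{(\nabla_v+v)f}_A\norm{(\nabla_v+v)g}_A\lesssim \norm{f}_D\norm{g}_D ,
\end{equation*}
by the triangle inequality and the definition \eqref{weighted D norm definition}.

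\textbf{The $\mathcal{B}_0$-term.} Write it as $\int \sqrt{\mu}\,(\nabla_v+v)f\cdot \mathcal{B}_0[\mathbf{h}]$ with $\mathbf{h}=(\nabla_v+v)g$. I would integrate $\sqrt\mu\nabla_v$ by parts onto $\mathcal{B}_0$, using $(\nabla_v+v)$ applied to $\sqrt\mu$-weighted terms, so that the factor $f$ appears only with a Gaussian weight. The result is bounded by
\begin{equation*}
\norm{\inn{v}^{-r}f}_{L^2_v}\,\norm{\inn{v}^{r}\sqrt{\mu}\,\langle\nabla_v\rangle\mathcal{B}_0[\mathbf{h}]}_{L^2_v},
\end{equation*}
and part (1) of Lemma \ref{lemma:B_0 - properties} gives control by $\norm{\inn{v}^{-r}\mathbf{h}}_{L^2_v}$.

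This step is the main obstacle. It needs a derivative of $\mathcal{B}_0$ bounded without a derivative of $\mathbf{h}$, which part (2) of Lemma \ref{lemma:B_0 - properties} supplies only under $V\in\dot H^2$. It also needs $\norm{\inn{v}^{-r}\mathbf{h}}_{L^2_v}\lesssim\norm{g}_D$ for $r\ge 3/2$, which follows from \eqref{eq:equiv-L2A} since $\inn{v}^{-3/2}$ dominates both projections.

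To avoid the extra hypothesis, I would first try using the symmetric form directly. Split $B$ into its two cross terms and apply Cauchy--Schwarz with weight $\sqrt{\mu}\sqrt{\mu_*}$. This gives a bound by
\begin{equation*}
\Big(\iint \mu_*\,(\nabla_v+v)f\cdot B(\nabla_v+v)f\Big)^{1/2}(\cdots)^{1/2}=\norm{(\nabla_v+v)f}_A(\cdots)^{1/2},
\end{equation*}
because $\int B\mu_*\,\dd v_*=A$. This yields the same product $\norm{f}_D\norm{g}_D$ with no regularity needed on $\mathcal{B}_0$, and I expect this route to be the cleanest.
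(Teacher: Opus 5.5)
Your final argument is correct, but its main step differs from the paper's.

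\textbf{The paper's route.} The paper keeps the split form from \eqref{splitL}. The $A$-term is handled by Cauchy--Schwarz in the $A$-inner product, as you do. The $\mathcal{B}_0$-term is bounded directly in weighted $L^2_v$:
\begin{equation*}
\Big|\int_{\R^d}(\nabla_v+v)f\cdot\sqrt{\mu}\,\mathcal{B}_0[(\nabla_v+v)g]\,\dd v\Big|\le\norm{\inn{v}^{-\frac32}(\nabla_v+v)f}_{L^2_v}\norm{\inn{v}^{\frac32}\sqrt{\mu}\,\mathcal{B}_0[(\nabla_v+v)g]}_{L^2_v}.
\end{equation*}
It then uses part (1) of Lemma \ref{lemma:B_0 - properties} with $r=\frac32$ and \eqref{equivA}. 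The reduction to $\ip f,\ip g$ is simply asserted.

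\textbf{Your ``main obstacle'' does not arise.} No derivative has to be moved off $f$. Indeed, $\norm{\inn{v}^{-3/2}(\nabla_v+v)f}_{L^2_v}\lesssim\norm{f}_D$, by exactly the observation you make for $\mathbf{h}$. Your integration-by-parts variant would need part (2) of that lemma, hence $V\in\dot H^2$, which the corollary does not assume. You should discard it.

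\textbf{Your symmetrized argument is valid and more elementary.} The matrix $B(v,v-v_*)$ is real and positive semidefinite, satisfies $B(v,v-v_*)=B(v_*,v_*-v)$, and has $\int B\mu_*\,\dd v_*=A$. So Cauchy--Schwarz for the form $(\mathbf{X},\mathbf{Y})\mapsto\iint\mathbf{X}\cdot B\,\mathbf{Y}\,\dd v\,\dd v_*$ bounds both terms by $\norm{(\nabla_v+v)f}_A\norm{(\nabla_v+v)g}_A\lesssim\norm{f}_D\norm{g}_D$. This needs only the triangle inequality in the $A$-seminorm, not Lemma \ref{lemma:B_0 - properties} or \eqref{equivA}. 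The same symmetric form also gives the self-adjointness of $L$. That justifies $\int fL[g]\,\dd v=\int\ip f\,L[\ip g]\,\dd v$, a step the paper takes for granted.

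\textbf{What the paper's route buys.} The split-form argument survives when weights are added. Once $\inn{v}^{-2r}$ is inserted, as in Lemma \ref{lemma:Coerceivity of L}, the $v\leftrightarrow v_*$ symmetry is lost. There the weighted bounds on $\mathcal{B}_0$ are what is actually needed.
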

\begin{proof} Indeed, in view of the structure of $L$ in \eqref{splitL}, we compute 
\begin{align*}
\int_{\R^d} f L[g] \dd v 
=& - \int_{\R^d} \big((\nabla_{v} +v) f\big) \cdot \Big(A(\nabla_{v} +v)g -  \sqrt{\mu} \mathcal{B}_{0}[(\nabla_{v} + v) g]\Big)dd v
\end{align*}
Using Lemmas \ref{lemma:A - properties}-\ref{lemma:B_0 - properties} and the fact that $\int_{\R^d} f L[g] \dd v = \int_{\R^d} \ip f L[\ip g] \dd v$, we obtain the corollary.
\end{proof}

We next study the relationship between the norms we are considering. Recall that $\norm{\cdot}_{A} = \norm{\cdot}_{0,0,0,A}$ and $\norm{\cdot}_{D} = \norm{\cdot}_{0,0,0,D}$ defined as in Section \ref{sec-norm}. We obtain the following.

\begin{lemma}\label{lemma:A,D - norm}
Let $f$ be a function defined in $L^2(\mathbb{R}^d \times \mathbb{R}^d)$. Let $V\in L^1(\R^d)\cap\dot H^\frac12(\R^{d})$ be symmetric. Then, the following estimates hold for any $r \in \R$, $N,M \ge 0$, $p \in [1,\infty]$ and multi-index $\b$.
\begin{enumerate}
    \item For $A$-norm, 
    \begin{equation} \label{equivA}
\norm{\inn{v}^{-3/2} \mathbf{h}}_{L^2_{v}} \lesssim \norm{\mathbf{h}}_{A} \lesssim \norm{\inn{v}^{-1/2} \mathbf{h}}_{L^2_{v}}.
\end{equation}

\item For $D$-norm, 
\begin{equation} \label{equivD}
\begin{aligned}
\norm{\inn{v}^{-1/2}f}_{L^2_{v}} + \norm{\inn{v}^{-3/2}\nabla_v f}_{L^2_{v}} \lesssim \norm{f}_{D} \lesssim& \norm{\inn{v}^{-1/2}f}_{L^2_{v}} + \norm{\inn{v}^{-1/2}\nabla_{v} f}_{L^2_{v}}.
\end{aligned}
\end{equation}

\item For weighted $D$-norm,
\begin{equation} \label{ineq:weightedD}
\norm{f}_{N,M,r,D} \approx \norm{\inn{v}^{-r}f}_{N,M,0,D}.
\end{equation}

\item Let $\mathbf{P}f$ be the projection onto the kernel space of $L$ as in \eqref{Pf-abc-hat}, and let $a[f], b[f],$ and $c[f]$ be the projections onto the respective orthonormal bases as defined in \eqref{hat-abc-def}. Then, there hold 
\begin{align} \label{ineq:Pfchart}
\norm{\inn{v}^{-r}\pt_{\b}^{\a}\P f}_{L^{p}_{v}} \approx_{r,\b} \norm{a[\pt^{\a}f]} + \norm{b[\pt^{\a}f]}+ \norm{c[\pt^{\a}f]} = \norm{\pt^{\a}a[f]} + \norm{\pt^{\a}b[f]}+ \norm{\pt^{\a}c[f]},
\end{align}
for any $1 \le p \le \infty$.
\end{enumerate}

\end{lemma}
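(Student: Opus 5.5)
We prove the four items in order; each is an elementary consequence of the pointwise matrix bounds in Lemma \ref{lemma:A - properties}.

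\emph{Item 1.} By the coercivity in Lemma \ref{lemma:A - properties}, pointwise in $v$ we have
\[
\mathbf{h}\cdot A\mathbf{h}\approx \inn{v}^{-1}|P_v^\perp\mathbf{h}|^2+\inn{v}^{-3}|P_v\mathbf{h}|^2 .
\]
Since $|P_v^\perp\mathbf{h}|^2+|P_v\mathbf{h}|^2=|\mathbf{h}|^2$, the right-hand side lies between $\inn{v}^{-3}|\mathbf{h}|^2$ and $\inn{v}^{-1}|\mathbf{h}|^2$. Integrating in $v$ gives \eqref{equivA}; equivalently, it follows from \eqref{eq:equiv-L2A}.

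\emph{Item 2.} Apply \eqref{equivA} to $\mathbf{h}=\nabla_v f$ and to $\mathbf{h}=vf$. The upper bound is then immediate, except that $\norm{vf}_A$ must be bounded by $\norm{\inn{v}^{-1/2}f}_{L^2_v}$, not by $\norm{\inn{v}^{1/2}f}_{L^2_v}$. The point is that $vf$ is parallel to $v$, so only the $P_v$ part enters:
\[
vf\cdot A\, vf\approx \inn{v}^{-3}|v|^2|f|^2\lesssim \inn{v}^{-1}|f|^2 .
\]
Alternatively one can use $|A v|\lesssim\inn{v}^{-2}$ from \eqref{eq:smooth-A}. The same computation gives the lower bound $\norm{vf}_A\gtrsim \norm{\inn{v}^{-3/2}|v|f}_{L^2_v}$, which controls $\inn{v}^{-1/2}f$ only for $|v|\ge1$. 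On the ball $|v|\le1$ we use $\norm{\nabla_vf}_A\gtrsim\norm{\inn{v}^{-3/2}\nabla_v f}_{L^2_v}$ together with a local Poincaré-type inequality. Concretely, fix a cutoff $\chi$ supported in $\{|v|\le2\}$ and equal to $1$ on $\{|v|\le1\}$, write $\chi f$ as the integral of its derivative along rays from infinity, and apply Hardy's inequality. This bounds $\norm{f}_{L^2(|v|\le1)}$ by $\norm{\nabla_v f}_{L^2(|v|\le2)}+\norm{f}_{L^2(1\le|v|\le2)}$, and both terms are already controlled. I expect this small-$|v|$ lower bound to be the only nontrivial point.

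\emph{Item 3.} By definition, $\norm{f}_{N,M,r,D}$ involves $\inn{v}^{-r}\pt^\a_\b(vf)$ and $\inn{v}^{-r}\pt^\a_\b\nabla_vf$. We compare these with the corresponding terms in which $\pt^\a_\b$ acts on $\inn{v}^{-r}f$. By the Leibniz rule the two differ by terms in which derivatives fall on $\inn{v}^{-r}$, and $|\pt_\g\inn{v}^{-r}|\lesssim\inn{v}^{-r-|\g|}$. So the commutators are lower-order in $\b$ and carry extra decay $\inn{v}^{-1}$. The typical error terms are $\inn{v}^{-r-1}\pt^\a_{\b'}f$ with $\b'<\b$ or $|\b'|\le|\b|$, measured in the $A$-norm. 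Such terms are bounded using $\norm{\inn{v}^{-1}g\,\mathbf{e}}_A\lesssim\norm{\inn{v}^{-3/2}g}_{L^2_v}$ and the lower bound in \eqref{equivD}, both applied at derivative order $|\b'|\le M$. This gives each direction of \eqref{ineq:weightedD}, by induction on $|\b|$ if needed.

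\emph{Item 4.} Since $\pt^\a$ commutes with the $v$-inner products, we have $\pt^\a\P f=\P\pt^\a f$, which proves the equality in \eqref{ineq:Pfchart}. Moreover,
\[
\inn{v}^{-r}\pt_\b\P\pt^\a f=\sum_i c_i\,\inn{v}^{-r}\pt_\b\chi_i ,
\]
where the $c_i$ are the coefficients $a,b_i,c$ applied to $\pt^\a f$. Each function $\inn{v}^{-r}\pt_\b\chi_i$ lies in $L^p_v$, so the upper bound holds with constants depending on $r,\b$. For the lower bound we need $\norm{\sum_i c_i\inn{v}^{-r}\pt_\b\chi_i}_{L^p_v}\gtrsim|c|$, which follows because all norms on a finite-dimensional space are equivalent. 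This requires the family $\{\pt_\b\chi_i\}$ to be linearly independent, or, where it is not, the statement to be read with the relevant coefficients only. Indeed $\pt_\b$ of a polynomial times $\sqrt{\mu}$ is again a polynomial times $\sqrt{\mu}$, and independence holds generically. I would simply invoke finite-dimensionality here and note this caveat.
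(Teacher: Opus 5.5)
Your proposal is correct. In outline it is the paper's argument: all four items come from the pointwise bounds on $A$ in Lemma \ref{lemma:A - properties}, the Leibniz rule for the weight, and finite-dimensionality of $\ker L$. Three differences are worth recording.

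\emph{Item 2.} You are right that $\norm{vf}_A\approx\norm{\inn{v}^{-3/2}|v|f}_{L^2_v}$ degenerates at $v=0$. So the lower bound $\norm{\inn{v}^{-1/2}f}_{L^2_v}\lesssim\norm{f}_D$ genuinely needs the $\nabla_v f$ part, through a local Poincar\'e inequality. The paper only says this follows ``upon using \eqref{equivA}'' and omits that step, so your version is the more complete one here.

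\emph{Item 3.} The paper's bookkeeping is more direct than yours. It expands $\pt_\b(v\inn{v}^{-r}f)=\sum_{\g\le\b}\binom{\b}{\g}\pt_\g\inn{v}^{-r}\,\pt_{\b-\g}(vf)$, and uses $\nabla_v(\inn{v}^{-r}f)=\inn{v}^{-r}\nabla_v f-r\inn{v}^{-r-2}vf$, while keeping the vectors $\pt_{\b-\g}(vf)$ and $\pt_{\b-\g}\nabla_v f$ intact. Since $\pt_\g\inn{v}^{\mp r}$ is a scalar of size $\lesssim\inn{v}^{\mp r}$, it commutes with $P_v$ and $P_v^\perp$. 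Hence every commutator term is dominated pointwise, in the quadratic form of $A$, by a term already present in the other norm. Neither the lower bound in \eqref{equivD} nor an induction on $|\b|$ is needed. Your route also closes, only less economically: you discard the direction, lose a factor $\inn{v}$ to the anisotropy of $A$, repay it with the extra $\inn{v}^{-1}$ from differentiating the weight, and re-enter the $D$-norm by induction.

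\emph{Item 4.} Drop the caveat rather than leave it open. The operator $\pt_\b$ is injective on $L^2(\R^d)$, since its symbol $(i\xi)^\b$ vanishes only on a null set. So $\sum_i c_i\pt_\b\chi_i=0$ forces $\sum_i c_i\chi_i=0$, hence $c=0$. Multiplying by the nonvanishing weight $\inn{v}^{-r}$ preserves linear independence. Equivalence of norms on $\R^{d+2}$ then gives the lower bound in \eqref{ineq:Pfchart} for every $\b$, $r$ and $p$. The paper argues only the upper bound explicitly, so with this line your item 4 is complete as well.
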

\begin{proof} The estimates in \eqref{equivA} follow directly from \eqref{eq:equiv-L2A}, upon recalling that $P_v, P_v^\perp$ are orthogonal projections onto $v$ and $v^\perp$, respectively, with $P_v + P_v^\perp = \mathbb{I}$. As for \eqref{equivD}, 
recalling the definition in \eqref{weighted D norm definition} and the estimate in \eqref{eq:equiv-L2A}, we have
\begin{align*}
\norm{f}^2_{D} =& \norm{v f }^2_{A} + \norm{\nabla_{v} f}^2_{A} 
\approx \norm{\langle v\rangle^{-3/2}vf}^2_{L^2_v} + \norm{\nabla_{v} f}^2_{A},
\end{align*}
upon noting that $P_v^\perp v =0$. This proves \eqref{equivD}, upon using \eqref{equivA}.  

Next we focus on \eqref{ineq:weightedD}.
Since $\norm{\cdot}$ is point-wise norm $x$ variable, it suffices to consider the case when $N=0$. By the definition of $\norm{\cdot}_{N,M,r,D}$ in \eqref{weighted D norm definition} and the properties of the $A$-norm in \eqref{eq:equiv-L2A}, we have
\begin{align*}
\norm{f}_{0,M,r,D}^2 =& \sum_{\norm{\b} \le M}\left(\int_{\R^d} \inn{v}^{-2r} \pt^{}_{\b} (vf) A \cdot \pt^{}_{\b} (vf) \dd v + \int_{\R^d} \inn{v}^{-2r} \pt^{}_{\b} (\nabla_v f) A \cdot \pt^{}_{\b} (\nabla_v f) \dd v\right)\\
\approx& \sum_{\norm{\b} \le M} \Bigg(\norm{\inn{v}^{-\frac12-r}P_v^\perp \pt^{}_{\b} (vf)}_{L^2_{v}}^{2} + \norm{\inn{v}^{-\frac12-r}P_v^\perp \pt^{}_{\b}(\nabla_v f)}_{L^2_{v}}^{2} \\
& \quad \qquad + \norm{\inn{v}^{-\frac32-r}P_v \pt^{}_{\b} (vf)}_{L^2_{v}}^{2} + \norm{\inn{v}^{-\frac32-r}P_v \pt^{}_{\b}(\nabla_v f)}_{L^2_{v}}^{2} \Bigg).
\end{align*}
On the other hand, we compute
\begin{align*}
\norm{\inn{v}^{-r}f}_{0,M,0,D}^2 
\approx& \sum_{\norm{\b} \le M} \Bigg(\norm{\inn{v}^{-\frac12}P_v^\perp \pt^{}_{\b} (v\inn{v}^{-r}f)}_{L^2_{v}}^{2} + \norm{\inn{v}^{-\frac12}P_v^\perp \pt^{}_{\b}(\nabla_v \inn{v}^{-r} f)}_{L^2_{v}}^{2} \\
& \quad \qquad + \norm{\inn{v}^{-\frac32}P_v \pt^{}_{\b} (v\inn{v}^{-r}f)}_{L^2_{v}}^{2} + \norm{\inn{v}^{-\frac32}P_v \pt^{}_{\b}(\nabla_v \inn{v}^{-r} f)}_{L^2_{v}}^{2} \Bigg).
\end{align*}
Note that $\norm{\pt_{\g} \inn{v}^{-r}} \lesssim \inn{v}^{-r}$ for any multi-index $\g$. Thus, for any $a,r\in \R$, we have
\begin{align*}
\norm{\inn{v}^{a}P_v^\perp \pt^{}_{\b} (v\inn{v}^{-r}f)}_{L^2_{v}}^{2} &\lesssim \sum_{\g \le \b} \norm{\inn{v}^{a}P_v^\perp \pt_{\g}{\inn{v}}^{-r} \pt^{}_{\b-\g} (vf)}_{L^2_{v}}^{2} \lesssim \sum_{\g \le \b} \norm{\inn{v}^{a-r}P_v^\perp \pt^{}_{\b-\g} (vf)}_{L^2_{v}}^{2}, \end{align*}
and 
$$
\norm{\inn{v}^{a}P_v \pt^{}_{\b} (v\inn{v}^{-r}f)}_{L^2_{v}}^{2}\lesssim \sum_{\g \le \b} \norm{\inn{v}^{a-r}P_v \pt^{}_{\b-\g} (vf)}_{L^2_{v}}^{2}.
$$
In addition, since $\nabla_{v} \inn{v}^{-r} = -rv \inn{v}^{-r-2}$, we have
\begin{align*}
\norm{\inn{v}^{a}P_v^\perp \pt^{}_{\b} (\nabla\inn{v}^{-r}f)}_{L^2_{v}}^{2} &\lesssim \sum_{\g \le \b} \left( \norm{\inn{v}^{a-r}P_v^\perp \pt^{}_{\b-\g} (\nabla f)}_{L^2_{v}}^{2} + \norm{\inn{v}^{a-r-2}P_v^\perp \pt^{}_{\b-\g} (v f)}_{L^2_{v}}^{2} \right), \\
\norm{\inn{v}^{a}P_v \pt^{}_{\b} (v\inn{v}^{-r}f)}_{L^2_{v}}^{2}&\lesssim \sum_{\g \le \b} \left( \norm{\inn{v}^{a-r}P_v \pt^{}_{\b-\g} (\nabla f)}_{L^2_{v}}^{2} + \norm{\inn{v}^{a-r-2}P_v \pt^{}_{\b-\g} (v f)}_{L^2_{v}}^{2} \right).
\end{align*}
Thus we have $\norm{\inn{v}^{-r}f}_{0,M,0,D} \lesssim \norm{f}_{0,M,r,D}$. If we set $g = \inn{v}^{-r}f$ and take $a$ in the above inequalities to be $-\frac{1}{2}-r$ or $-\frac{3}{2}-r$, we then have $f = \inn{v}^{r} g$ and so $ \norm{f}_{0,M,r,D} \lesssim \norm{\inn{v}^{-r}f}_{0,M,0,D}$. This proves \eqref{ineq:weightedD}. 

Finally, we focus on \eqref{ineq:Pfchart}. Since $\pt^\a_{x}$ commutes with the $L^p$ norm in $v$ and the projection operator, we only need to consider the case $\a = 0$. By the definition of $\mathbf{P}f$ in \eqref{Pf-abc-hat} and the fact that $a[f]$, $b[f]$, and $c[f]$ are functions depending only on $x$, we obtain the following identity:
\begin{align*}
\inn{v}^{-r}\pt_{v}^{\b}\P f = a[f]\inn{v}^{-r}\pt_{v}^{\b}\chi_{0} + \sum_{i=1}^{d}b_{i}[f] \inn{v}^{-r}\pt_{v}^{\b}\chi_{i} + c[f] \inn{v}^{-r}\pt_{v}^{\b} \chi_{d+1},
\end{align*}
Since each basis function $\chi_{i}$ takes the form of $\sqrt{\mu}$ multiplied by a polynomial in $v$, the function $\inn{v}^{-r}\partial_{v}^{\beta}\chi_{i}$ is also rapidly decaying and bounded in every $L^p$ norm, which yields \eqref{ineq:Pfchart}. This completes the proof of the lemma. 
\end{proof}

\subsection{Weighted Coercivity Estimates}

In this section, we establish the coercive structure of the linearized operator $L$ in weighted spaces. Precisely, we obtain the following lemma.

\begin{lemma} \label{lemma:Coerceivity of L} (Coercivity of $L$)
Let $V\in L^1(\R^d)\cap\dot H^\frac12(\R^{d})$ be symmetric and positive definite. 
\begin{enumerate}
\item Semi-coercivity of operator $L$ : For any $g(x,\cdot) \in L^2(\R^d_{v})$
\begin{equation}\label{coerL}
-\int_{\R^d} \pt^{\a} g \pt^{\a}L[g] \dd v \gtrsim \norm{\ip \pt^{\a} g}_{D}^2.
\end{equation}

\item Weighted coercivity of operator $\pt_{\b} L$ : For any multi-index $\b >0$, $r \in \mathbb{R}$, and $g(x,\cdot) \in L^2(\R^d_{v})$
\begin{equation}\label{weightLg1}
\begin{aligned}
-\int_{\R^d} \inn{v}^{-2r} \pt_{\b}g \pt_{\b}L[g] \dd v \ge \eta \norm{\inn{v}^{-r}\pt_{\b} g}_{D}^2
- C_{\eta,\b} \sum_{\norm{\b'} \le \norm{\b}-1}\norm{\inn{v}^{-r} \pt_{\b'} g}_{D}^2.
\end{aligned}
\end{equation}
If $\b =0$, we have
\begin{equation}\label{weightLg2}
\begin{aligned}
-\int_{\R^d} \inn{v}^{-2r} g L[g] \dd v \ge \eta\norm{\inn{v}^{-r} g}_{D}^2
- C_{\eta}\norm{g}_{D}^2,
\end{aligned}
\end{equation}
for some $\eta>0$.
\end{enumerate}
\end{lemma}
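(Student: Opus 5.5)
For part 1, we use that $L$ acts only in $v$ and commutes with $\pt^{\a}=\pt_x^\a$, so $\pt^\a L[g]=L[\pt^\a g]$ and it suffices to treat $\a=0$. We take the coercivity of the homogeneous linearized Lenard--Balescu operator from \cite{duerinckx2023well} (their Lemma 2.4), namely $-\int hL[h]\,\dd v\gtrsim \norm{\ip h}_D^2$, which holds pointwise in $x$. Were this not directly quotable, we would argue from \eqref{splitL}. Integration by parts gives
\[
-\int hL[h] = \int (\nabla_v+v)h\cdot A(\nabla_v+v)h - \int (\nabla_v+v)h\cdot\sqrt{\mu}\,\mathcal{B}_0[(\nabla_v+v)h],
\]
and, by symmetrizing in $v,v_*$, this equals $\frac12\iint \sqrt{\mu\mu_*}\,\big|B^{1/2}(\ldots)\big|^2\ge0$, which vanishes exactly on $\ker L$. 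The first term is comparable to $\norm{h}_D^2$ up to lower-order terms. The second is compact relative to $D$ by Lemma \ref{lemma:B_0 - properties}, since it gains Gaussian decay. The standard contradiction and compactness argument, as in Guo's treatment of the Landau operator \cite{guo2002landau}, then yields the spectral gap on $(\ker L)^\perp$.

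For part 2, we commute $\pt_\b$ and the weight through \eqref{splitL}. Writing $\pt_\b L[g]=(\nabla_v-v)\cdot A(\nabla_v+v)\pt_\b g+\text{commutators}$, the commutators come from $[\pt_\b,v]$ and from derivatives falling on $A$. These involve at most $\norm{\b}-1$ velocity derivatives of $g$, with coefficients $\pt^{\b_1}A$ controlled by \eqref{eq:smooth-A} and \eqref{eq:hnabAh}. The principal term, tested against $\inn{v}^{-2r}\pt_\b g$ and integrated by parts, gives
\[
\int \inn{v}^{-2r}(\nabla_v+v)\pt_\b g\cdot A(\nabla_v+v)\pt_\b g
\]
plus a term in which $\nabla_v\inn{v}^{-2r}=-2rv\inn{v}^{-2r-2}$ appears. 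That term is lower order in weight. Expanding $|(\nabla_v+v)h|_A^2=|\nabla_vh|_A^2+|vh|_A^2+2\nabla_vh\cdot Avh$ and integrating the cross term by parts yields $-\int h^2\nabla_v\cdot(Av)$ plus weight-derivative terms. By \eqref{eq:smooth-A}, $|\nabla_v\cdot(Av)|\lesssim\inn{v}^{-2}$, which is dominated by the $\inn{v}^{-3}|v|^2$ part of $\norm{vh}_A^2$ for large $|v|$, while the bounded-$|v|$ region costs only a lower-order $L^2$ norm. Combined with \eqref{ineq:weightedD}, this gives the lower bound $\eta\norm{\inn{v}^{-r}\pt_\b g}_D^2-C\norm{\inn{v}^{-r}\pt_\b g}_{L^2(|v|\le R)}^2$. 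The localized $L^2$ piece is then interpolated: for $\b>0$ we write $\pt_\b g=\pt_{e_j}\pt_{\b-e_j}g$ and use $\norm{\pt_{e_j}h}_{L^2(B_R)}\le \kappa\norm{h}_{H^2\text{-type}}+C_\kappa\norm{h}_{L^2(B_R)}$. More simply, the compactly supported $\nabla_v$ piece is absorbed via Young, since $\norm{\inn{v}^{-r}\pt_\b g}_{L^2(B_R)}\lesssim\norm{\inn{v}^{-r}\pt_{\b-e_j}g}_D$ by \eqref{equivD}, which is already a lower-order term. Cauchy--Schwarz with $\eta$-splitting handles all commutators.

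The nonlocal term $\int\inn{v}^{-2r}\pt_\b g\,\pt_\b[(\nabla_v-v)\cdot\sqrt{\mu}\mathcal{B}_0[(\nabla_v+v)g]]$ carries $\sqrt\mu$ decay. After one integration by parts it is bounded using Lemma \ref{lemma:B_0 - properties}(2), the improved regularity. The derivatives $\nabla^{\b}_v\mathcal{B}_0[(\nabla_v+v)g]$ cost only $\norm{\nabla_v^{\g}(\nabla_v+v)g}$ with $\g<\b$, i.e.\ at most $\norm{\b}$ derivatives of $g$ with any negative weight. This is bounded by $\kappa\norm{\inn{v}^{-r}\pt_\b g}_D^2+C_\kappa\sum_{\norm{\b'}\le\norm{\b}-1}\norm{\inn{v}^{-r}\pt_{\b'}g}_D^2$; here the top-order piece has a fast-decaying weight, so it is controlled by the $D$-norm. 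For $\b=0$ there is no lower derivative to absorb into. The same computation instead gives error terms localized in $|v|\le R$, or with $\sqrt\mu$ weights, which are bounded by $\norm{g}_D^2$ because all weights are equivalent on compact sets or under Gaussian decay. This yields \eqref{weightLg2}. The main obstacle is the degenerate anisotropic structure of $A$: in the cross term and in the weight derivative $\nabla_v\inn{v}^{-2r}\propto v$, the errors point in the $P_v$ direction, where $A$ only gives $\inn{v}^{-3}$. One must check that $v\cdot A v\sim\inn{v}^{-1}$ still dominates $|\nabla\cdot(Av)|+|r||v|^2\inn{v}^{-2}|P_v A|$ at large $|v|$. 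This is the step to verify first, using $|A v|\lesssim\inn{v}^{-2}$ from \eqref{eq:smooth-A}.
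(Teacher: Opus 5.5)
Your argument is correct, but for \eqref{weightLg1} it takes a genuinely different route from the paper.

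\textbf{The paper's route.} It applies the semi-coercivity \eqref{coerL} directly to $h=\inn{v}^{-r}\pt_\b g$. It bounds the resulting projection term $\P h$ by the zeroth-order quantity $\norm{\inn{v}^{-r}g}_D$. After that it only has to estimate the commutators $[\inn{v}^{-r}\pt_\b,L]$, for both the $A$-part and the $\mathcal{B}_0$-part of \eqref{splitL}.

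\textbf{Your route.} You never use the spectral gap in part 2. Instead:
\begin{itemize}
\item You prove a G{\aa}rding-type bound for the local diffusion part alone,
\[
\int\inn{v}^{-2r}(\nabla_v+v)u\cdot A(\nabla_v+v)u\ge c\norm{\inn{v}^{-r}u}_D^2-C\norm{\inn{v}^{-r}u}_{L^2(|v|\le R)}^2 .
\]
This comes from the cross term $-\int u^2\nabla_v\cdot(\inn{v}^{-2r}Av)$ together with $|\nabla_v\cdot(Av)|\lesssim\inn{v}^{-2}$.
\item You absorb the localized piece into $\norm{\inn{v}^{-r}\pt_{\b-e_j}g}_D$, using the $\inn{v}^{-3/2}\nabla_v$ control in \eqref{equivD}.
\item You treat the entire nonlocal $\mathcal{B}_0$ term as a (top)$\times$(lower-order) perturbation, using Gaussian decay and improved regularity.
\end{itemize}

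\textbf{Comparison.} Your route is more elementary. It also makes transparent why $\ker L$ plays no role once $\b>0$: every compactly supported or Gaussian-localized error is paid for by the dissipation one derivative lower. The paper's route is shorter, because it inherits the principal lower bound from the cited coercivity and only needs commutator bounds. Both routes invoke Lemma \ref{lemma:B_0 - properties}(2) when $\pt_\b$ falls on $\mathcal{B}_0$. So both implicitly need $V\in\dot H^2$ and $xV\in L^2$, beyond the hypotheses stated in the lemma.

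\textbf{The case $\b=0$.} For \eqref{weightLg2}, your argument is essentially the paper's $r<0$ computation, applied uniformly in $r$. Here you are more careful than the paper: you keep the compactly supported error in the lower bound for the local part. The paper's assertion $T_1^r\ge\eta\norm{\inn{v}^{-r}g}_D^2$ fails as stated. For $g=\inn{v}^{r}\sqrt{\mu}$ one has $(\nabla_v+v)\sqrt{\mu}=0$, so $T_1^r=0$, while $\norm{\sqrt{\mu}}_D>0$. The paper's bound needs exactly the $C\norm{g}_D^2$ correction that your argument supplies.
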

\begin{proof}

The proof of semi-coercivity can be found in Step 2 of the proof of Theorem 1 in \cite{duerinckx2023well}. We therefore focus on establishing the weighted coercivity estimates. 

\subsubsection*{Proof of \eqref{weightLg1}.} 

For $\b>0$, we compute 
\begin{equation}\label{compLg}
\begin{aligned}
\int_{\R^d} \inn{v}^{-2r} \pt_{\b}g \pt_{\b}L[g] \dd v &= \int_{\R^d} \pt_{\b}[\inn{v}^{-r} g] L[\inn{v}^{-r}\pt_{\b}g] \dd v +  \int_{\R^d} [\inn{v}^{-r} ,\pt_{\b}]g L[\inn{v}^{-r}\pt_{\b}g] \dd v 
\\&\quad + \int_{\R^d} \inn{v}^{-r} (\pt_{\b} g) [\inn{v}^{-r} \pt_{\b},L]g \dd v 
\end{aligned}
\end{equation}
in which $[A,B] = AB - BA$ denotes the usual commutator. Using \eqref{coerL}, we bound 
\begin{equation}\label{lowbdLg}
\begin{aligned}
-\int_{\R^d} \pt_{\b}[\inn{v}^{-r} g] L[\inn{v}^{-r}\pt_{\b}g] \dd v 
&\gtrsim \norm{\ip (\inn{v}^{-r}\pt_{\b}g)}_{D}^2
\\
&\gtrsim \norm{\inn{v}^{-r}\pt_{\b}g}_{D}^2 - \norm{\inn{v}^{-r}\P g}_{D}^2 ,
\end{aligned}
\end{equation}
in which, using \eqref{ineq:Pfchart} and recalling \eqref{hat-abc-def}, we may bound 
$$
\begin{aligned}
\norm{\inn{v}^{-r}\P g}_{D} 
&\lesssim 
\norm{a[g]} + \norm{b[g]}+ \norm{c[g]} \lesssim \norm{\inn{v}^{-r-1/2}g}_{L^2_v} \lesssim \norm{\inn{v}^{-r}g}_{D} .
\end{aligned}$$
It remains to bound the last two integrals in \eqref{compLg}. Indeed, using the upper bound \eqref{upboundLg}, we obtain 
$$\Big| \int_{\R^d} [\inn{v}^{-r} ,\pt_{\b}]g L[\inn{v}^{-r}\pt_{\b}g] \dd v \Big| \lesssim \norm{\inn{v}^{-r}\pt_{\b}g}_{D} \norm{[\inn{v}^{-r} ,\pt_{\b}]g}_D
, $$
in which we may bound $|[\inn{v}^{-r} ,\pt_{\b}]g|\lesssim \sum_{\norm{\b'} < \norm{\b}}\norm{\inn{v}^{-r} \pt_{\b'}g}$. As for the last integral term, we claim that for all $\b >0$,
\begin{align}\label{ineq:lemma7-step1}
\norm{\int_{\R^d} \inn{v}^{-r} (\pt_{\b} g) [\inn{v}^{-r} \pt_{\b},L]g \dd v} \lesssim_{V,\b} \norm{\inn{v}^{-r}\pt_{\b} g}_{D} \sum_{\norm{\b'} < \norm{\b}}\norm{\inn{v}^{-r} \pt_{\b'}g}_{D}.
\end{align}
The estimate \eqref{weightLg1} would then follow, upon using the standard Young's inequality and the lower bound \eqref{lowbdLg}. To prove the claim \eqref{ineq:lemma7-step1},  recalling \eqref{splitL}, we decompose
\begin{align*}
\int_{\R^d} \inn{v}^{-r} (\pt_{\b} g) [\inn{v}^{-r} \pt_{\b},L]g \dd v = T^{r,\b}_{1} - T^{r,\b}_{2},
\end{align*}
in which 
\begin{align*}
T^{r,\b}_{1} :=& \int_{\R^d} \inn{v}^{-r} (\pt_{\b} g) [\inn{v}^{-r} \pt_{\b},(\nabla_{v} - v) \cdot (A(\nabla_{v}+v))]g \dd v,\\
T^{r,\b}_{2} :=& \int_{\R^d} \inn{v}^{-r} (\pt_{\b} g) [\inn{v}^{-r} \pt_{\b}, (\nabla_{v} - v) \cdot (\sqrt{\mu} \mathcal{B}_{0}(\nabla_{v}+v))]g \dd v.
\end{align*}
A direct computation yields
\begin{align*}
&[\inn{v}^{-r} \pt_{\b},\nabla_{v}] = -r v\inn{v}^{-(r+2)}\pt_{\b}, \quad [\inn{v}^{-r} \pt_{\b},v] = \inn{v}^{-r} \sum_{\substack{e_{j} \le \b \\ \norm{e_{j}}=1}}e_{j}\pt_{\b-e_{j}},\\
&[\inn{v}^{-r}\pt_{\b},A] = \sum_{\substack{\g \le \b \\ \g \neq 0}} \binom{\b}{\g} \inn{v}^{-r} \pt_{\g}A \pt_{\b-\g}.
\end{align*}
Therefore, we compute 
\begin{align*}
& [\inn{v}^{-r} \pt_{\b},(\nabla_{v} - v) \cdot (A(\nabla_{v}+v))] \\
&= -r v \inn{v}^{-r-2}\cdot\sum_{\g \le \b} \binom{\b}{\g} \pt_{\g}A\bigg( {(\nabla_{v} +v) \pt_{\b-\g}} +  {\sum_{\substack{e_{j} \le \b-\g \\ \norm{e_{j}}=1}} e_{j}\pt_{\b-\g-e_{j}}} \bigg) \\
&\quad+ {\inn{v}^{-r}\sum_{\substack{e_{j} \le \b \\ \norm{e_{j}}=1}}e_{j} \cdot \sum_{\g \le \b-e_{j}} \binom{\b-e_{j}}{\g}\pt_{\g}A\bigg((\nabla_{v} +v) \pt_{\b-\g-e_{j}} + \sum_{\substack{e_{k} \le \b-\g-e_{j} \\ \norm{e_{k}}=1}} e_{k}\pt_{\b-\g-e_{j}-e_{k}} \bigg)} \\
&\quad+ {(\nabla_{v}-v) \cdot \sum_{\substack{\g \le \b \\ \g \neq 0}} \binom{\b}{\g} \inn{v}^{-r} \pt_{\g}A \bigg((\nabla_{v}+v)\pt_{\b-\g}  + \sum_{\substack{e_{j} \le \b-\g \\ \norm{e_{j}}=1}} e_{j}\pt_{\b-\g-e_{j}} \bigg)} \\
&\quad+ (\nabla_{v} - v) \cdot A \bigg( {- r v \inn{v}^{-r-2} \pt_{\b}}  +  {\inn{v}^{-r} \sum_{\substack{e_{j} \le \b \\ \norm{e_{j}}=1}}e_{j}\pt_{\b-e_{j}}} \bigg).
\end{align*}
Let us first focus on $T^{r,\b}_{1}$. After shifting $(\nabla_{v}-v)$ from the commutator to the $\partial_\b g$ term via integration by parts, we apply Lemma \ref{lemma:A - properties}. A careful count of the derivatives reveals that no terms of the form $\nabla_{v} \partial_{\b} g \cdot A \nabla_{v} \partial_{\b} g$ exist; instead, every term lacks at least one derivative. By further utilizing $\langle v \rangle^{-1} \le 1$ and the commutativity of the weight with $A$, we obtain
\begin{align*}
\norm{T^{r,\b}_{1}} \lesssim \sum_{\g < \b}\norm{\inn{v}^{-r}\pt_{\b}g}_{D} \norm{\inn{v}^{-r}\pt_{\g}g}_{D}.
\end{align*}
On the other hand, for $T^{r,\b}_{2}$, we further write 
\begin{align*}
T^{r,\b}_{2} =& \int_{\R^d}\inn{v}^{-2r} \pt_{\b} g\pt_{\b} \Big((\nabla_{v}-v)\cdot (\sqrt{\mu}\mathcal{B}_{0}[(\nabla_{v}+v)g]) \Big) \dd v\\
&- \int_{\R^d}\inn{v}^{-r} \pt_{\b}g \Big((\nabla_{v}-v)\cdot (\sqrt{\mu}\mathcal{B}_{0}[(\nabla_{v}+v)\inn{v}^{-r}\pt_{\b}g]) \Big) \dd v\\
=& T^{r,\b}_{2,1} + T^{r,\b}_{2,2},
\end{align*}
where $T^{r,\beta}_{2,1}$ can be written as follows, upon taking integration by parts in $v$, 
\begin{align*}
T^{r,\b}_{2,1} = &- \int_{\R^d}(\nabla_{v}+v)(\inn{v}^{-r} \pt_{\b}g) \cdot \inn{v}^{-r}\pt_{\b} \Big( (\sqrt{\mu}\mathcal{B}_{0}[(\nabla_{v}+v)g]) \Big) \dd v\\
& - \sum_{\substack{e_{j} \le \b \\ \norm{e_{j}} = 1 }}\int_{\R^d}\inn{v}^{-r} \pt_{\b}g \cdot e_{j} \inn{v}^{-r}\pt_{\b} \Big( (\sqrt{\mu}\mathcal{B}_{0}[(\nabla_{v}+v)g]) \Big) \dd v \\
& +r \int_{\R^d}v(\inn{v}^{-r} \pt_{\b}g) \cdot \inn{v}^{-r-2}\pt_{\b} \Big( (\sqrt{\mu}\mathcal{B}_{0}[(\nabla_{v}+v)g]) \Big) \dd v.
\end{align*}
Applying Lemma \ref{lemma:B_0 - properties} and Lemma \ref{lemma:A,D - norm}, we obtain
\begin{align*}
\norm{T^{r,\b}_{2,1}} \lesssim \sum_{\norm{\b'} < \norm{\b}}\norm{\inn{v}^{-r}\pt_{\b}g}_{D} \norm{\inn{v}^{-r}\pt_{\b'}g}_{D}.
\end{align*}
Similarly, for $T^{r,\beta}_{2,2}$, we write upon integrating by parts for some $e_{j} \le \b$, 
\begin{align*}
T^{r,\beta}_{2,2} =& - \int_{\R^d}(\nabla_{v}+v) (\inn{v}^{-r} \pt_{\b-e_{j}}g) \cdot \pt_{e_{j}}\Big( (\sqrt{\mu}\mathcal{B}_{0}[(\nabla_{v}+v)\inn{v}^{-r}\pt_{\b}g]) \Big) \dd v \\
&- \int_{\R^d}(e_{j} -r vv_{j}\inn{v}^{-2})(\inn{v}^{-r} \pt_{\b-e_{j}}g) \cdot \Big( (\sqrt{\mu}\mathcal{B}_{0}[(\nabla_{v}+v)\inn{v}^{-r}\pt_{\b}g]) \Big) \dd v
\end{align*}
Applying Lemma \ref{lemma:B_0 - properties} and Lemma \ref{lemma:A,D - norm}, we obtain
\begin{align*}
\norm{T^{r,\b}_{2,2}} \lesssim \sum_{\norm{\b'} < \norm{\b}}\norm{\inn{v}^{-r}\pt_{\b}g}_{D} \norm{\inn{v}^{-r}\pt_{\b'}g}_{D}.
\end{align*}
This proves \eqref{ineq:lemma7-step1}, and therefore, the weighted coercivity estimate \eqref{weightLg1} as claimed.

\subsubsection*{Proof of \eqref{weightLg2}.} 

Finally, we focus on the case when $\b=0$, namely proving the weighted coercivity estimate \eqref{weightLg2}. First, consider $r\ge 0$. Note that $\norm{g}_{D} \ge \norm{\inn{v}^{-r}g}_{D}$. Therefore, using the upper bound \eqref{upboundLg}, we obtain 
\begin{align*}
-\int_{\R^d} \inn{v}^{-2r} g L[g] \dd v  \ge -C \norm{g}_{D}^2 \ge \norm{\inn{v}^{-r} g}_{D}^2
- (C+1) \norm{g}_{D}^2.
\end{align*}
Next, we consider $r <0$. In view of \eqref{splitL}, we compute 
\begin{align*}
-\int_{\R^d} \inn{v}^{-2r} g L[g] \dd v =& -\int_{\R^{d}} \inn{v}^{-2r} g \Big((\nabla_{v} -v) \cdot A (\nabla_{v} +v) g \\
&- (\nabla_{v} - v) \cdot (\sqrt{\mu} \mathcal{B}_{0}[(\nabla_{v} + v)g]) \Big) \dd v \\ 
=& \int_{\R^{d}} (\nabla_{v} +v) (\inn{v}^{-r} g) \cdot A (\nabla_{v} +v) (\inn{v}^{-r} g) \dd v \\
&-\int_{\R^{d}} \inn{v}^{-r} g [\inn{v}^{-r} ,(\nabla_{v} - v) \cdot (A(\nabla_{v}+v))]g \dd v \\
& -\int_{\R^{d}} \inn{v}^{-2r} g (\nabla_{v} - v) \cdot (\sqrt{\mu} \mathcal{B}_{0}[(\nabla_{v} + v)g] )] \dd v \\
=& T^{r}_{1} + T^{r}_{2} + T^{r}_{3}.
\end{align*}
Thanks to the structure of $A$ and the definition of the $A$-norm, we have
\begin{align*}
T^{r}_{1} \ge  \eta\norm{\inn{v}^{-r} g}_{D}^2,
\end{align*}
for some $\eta>0$. Following the same procedure as done in the previous step, we have
\begin{align*}
\norm{T^{r}_{2}} \le& \int_{\R^d} \norm{rv\inn{v}^{-r}g \cdot A(\nabla_{v}+v)(\inn{v}^{-(r+2)}g)} \dd v \\
&+ \int_{\R^d} \norm{r(r+2)v\inn{v}^{-r}g \cdot Av\inn{v}^{-(r+4)}g} \dd v \\
&+ \int_{\R^d} \norm{r(\nabla_{v}+v)(\inn{v}^{-r}g) \cdot Av\inn{v}^{-(r+2)}g} \dd v.
\end{align*}
Take any $M>0$. Recalling that $r<0$, we bound 
\begin{align*}
\int_{\R^d} \norm{rv\inn{v}^{-r}g \cdot A(\nabla_{v}+v)(\inn{v}^{-(r+2)}g)} \dd v \le& \int_{\norm{v} \le M} \norm{rv\inn{v}^{-r}g \cdot A(\nabla_{v}+v)(\inn{v}^{-(r+2)}g)} \dd v \\
&+ \int_{\norm{v} > M} \norm{rv\inn{v}^{-r}g \cdot A(\nabla_{v}+v)(\inn{v}^{-(r+2)}g)} \dd v \\
\lesssim & \langle M\rangle^{2|r|}\int_{\norm{v} \le M} \norm{vg \cdot A(\norm{\nabla_{v}g}) + \norm{{v}g})} \dd v \\
&+ \langle M\rangle^{-2} \int_{\norm{v} > M} \norm{v\inn{v}^{-r}g \cdot A(\nabla_{v}+v)(\inn{v}^{-r}g)} \dd v \\
\lesssim & \langle M\rangle^{2|r|} \norm{g}_{D}^2 + \langle M\rangle^{-2} \norm{\inn{v}^{-r}g}_{D}^2
\end{align*}
The other terms can be bounded in a similar manner. Therefore, taking $M$ sufficiently large so that $\langle M\rangle^{-2} \ll \eta$, 
we obtain 
\begin{align*}
\norm{T^{r}_{2}} \le  C_\eta\norm{g}_{D}^2 + \frac{\eta}{2} \norm{\inn{v}^{-r}g}_{D}^2.
\end{align*}
Finally, for $T^{r}_{3}$, using Lemma \ref{lemma:B_0 - properties}, we bound 
\begin{align*}
\norm{T^{r}_{3}} \le& \norm{\int_{\R^{d}} \inn{v}^{-2r} g (\nabla_{v} - v) \cdot (\sqrt{\mu} \mathcal{B}_{0}[(\nabla_{v} + v)g] )] \dd v} \\
\le & \norm{\int_{\R^{d}}  (\nabla_{v} + v(1-2r \inn{v}^{-2}))g  \cdot \inn{v}^{2|r|}(\sqrt{\mu} \mathcal{B}_{0}[(\nabla_{v} + v)g] )] \dd v} \\
\lesssim & \norm{g}_{D}^2.
\end{align*}
This completes the proof of \eqref{weightLg2}, and hence, the lemma. 
\end{proof}

\section{Nonlinear Collision Operator}\label{sec-nonop}

In this section, we shall estimate the nonlinear collision terms $ N(g_{1},g_{2},g_{3})$ in weighted energy norms. Precisely, we obtain the following proposition. 

\begin{proposition} \label{proposition:N upper bound} Let $N(g_1,g_2,g_3)$ be the nonlinear term defined as in \eqref{defNg123}. Assume $V \in L^1(\R^d) \cap \dot{H}^{{2}}(\R^d)$ be symmetric and positive definite so that $x V \in L^2(\R^d)$. In addition, assume that $g_{1}(x,\cdot)\in L^2(\R^{d})$ satisfies the following smallness condition, for some $r_0\ge0$, $\delta_0>0$, and some large enough constant $C_0$,
\begin{equation}
    \norm{\inn{v}^{-r_{0}} \inn{\nabla_{v}}^{\frac{3}{2}+\d_{0}}g_{1}(x,\cdot)}_{L^2_{v} } \le \frac{1}{C_0}.
\end{equation}
Then, for any $r \ge 0$, $\theta \in \R$, and any multi-index $\a$ and $\b$, there holds
 \begin{equation} \label{cor 7 - inequality}
\begin{aligned}
&\norm{\iint_{\T^d \times \R^d} \inn{v}^{-2\th} h\pt^{\a}_{\b}N(g_{1},g_{2},g_{3}) \;\dd x\dd v } 
\\
&\lesssim \normm{h}_{0,0,\th,D} \sum_{} \Big(\normm{g_{3}}_{\norm{\sigma_{3}},\norm{\g_{3}},\th,D} \normm{g_{2}}_{\norm{\sigma_{2}},\norm{\g_{2}},r,D} \prod_{j}(1+\normm{g_{1}}_{\norm{\sigma_{1,j}},\norm{\g_{1,j}},r,D}),
\\
&\quad+\normm{g_{3}}_{\norm{\sigma_{3}},\norm{\g_{3}},\th,D} \normm{g_{1}}_{\norm{\sigma_{1,1}},\norm{\g_{1,1}},r,D}  \prod_{j}(1+\normm{g_{1}}_{\norm{\sigma_{1,j}},\norm{\g_{1,j}},r,D}) \\
&\quad+ \normm{g_{2}}_{\norm{\sigma_{2}},\norm{\g_{2}},r,D} \normm{g_{1}}_{\norm{\sigma_{1,1}},\norm{\g_{1,1}},r,D} \prod_{j}(1+\normm{g_{1}}_{\norm{\sigma_{1,j}},\norm{\g_{1,j}},r,D})\Big)
\end{aligned}
\end{equation} 
for any functions $h$ and $g_j$ such that the right-hand side is finite. Here, the summation is taken over all partitions of the multi-indices $\a$ and $\b$ satisfying 
\begin{equation}\label{partitionab}\sum_{j}\a_{1,j} + {\a_{2}} + {\a_{3}} = {\a},\qquad \sum_{j}\b_{1,j} + {\b_{2}} + {\b_{3}} = {\b},
\end{equation}
and the pairs $(\sigma_{i},\gamma_{i})$ are defined as follows: 

\begin{itemize}

\item For any $i \in \{(1,1),\cdots (1,n),2\}$, if $(|\a_i|,|\b_i|)\not = (|\a|,|\b|)$, then set
\begin{equation}\label{defsigmai1}
(\sigma_i, \gamma_i) = \left\{ \begin{aligned}
(|\alpha_i|+\lfloor\frac{d}{2}+1 \rfloor , |\beta_i|+1) , &\qquad \mbox{if} \quad\norm{\a_i}+\norm{\b_i} < \frac{d}{2}+5,
\\
(|\alpha_i|, |\beta_i|+1) , &\qquad \mbox{if} \quad\norm{\a_i}+\norm{\b_i} \ge \frac{d}{2}+5.
\end{aligned}
\right.
\end{equation}
Otherwise, we set $(\sigma_i, \gamma_i) = (|\a|,|\b|)$ if $(|\a_i|,|\b_i|) = (|\a|,|\b|)$. 

\item For $i=3$, if $(\a_3,\b_3) =0$, set $(\sigma_3,\gamma_3) = (0,1)$, and
if $(\a_3,\b_3)\not =0$, then set 
\begin{equation}\label{defsigmai}
(\sigma_3, \gamma_3) = \left\{ \begin{aligned}
(|\alpha_3|, |\beta_3|) , &\qquad \mbox{if} \quad
\norm{\a_i}+\norm{\b_i} < \frac{d}{2}+5, \quad \forall ~i\in \{(1,1),\cdots (1,n),2\}, 
\\
(|\alpha_3|, |\beta_3|) , &\qquad \mbox{if} \quad\norm{\a_3}+\norm{\b_3} \ge \frac{d}{2}+5,
\\
(|\alpha_3|+\lfloor\frac{d}{2}+1 \rfloor , |\beta_3|) , &\qquad \mbox{if otherwise.} \end{aligned}
\right.
\end{equation}

\end{itemize}
Here, $\lfloor a\rfloor$ denotes the greatest integer that is less than or equal to $a$.  

\end{proposition}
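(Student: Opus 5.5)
We will prove the estimate by integrating by parts once in $v$ and then doing a pointwise-in-$x$ trilinear estimate. Sobolev embedding on $\T^d$ handles the low-order factors.

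\textbf{Reduction to a trilinear form.} Every term of $N(g_1,g_2,g_3)$ in \eqref{defNg123} has divergence form $(\nabla_v - v)\cdot \mathbf{G}$. We first apply $\pt^\a_\b$ and then move $(\nabla_v - v)$ onto $\inn{v}^{-2\th}h$. This gives $-(\nabla_v+v)(\inn{v}^{-2\th}h) = -\inn{v}^{-\th}\big[(\nabla_v+v)(\inn{v}^{-\th}h) - \th v\inn{v}^{-\th-2}h\big]$. The left factor is then controlled by $\normm{h}_{0,0,\th,D}$ through \eqref{ineq:weightedD} and \eqref{equivD}, as long as the vector field it pairs with is measured in the dual $A$-type weight. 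The integration by parts has to be arranged so that the pairing keeps the structure $\mathbf{h}\cdot A\,\mathbf{h}'$, or a matrix dominated as in \eqref{eq:hnabAh}.

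\textbf{Distributing derivatives.} By Leibniz, $\pt^\a_\b$ distributes over the kernel $\mathcal{B}_{\nabla_v F_{g_1}}$, which depends on $g_1$ only through $\e$, and over $g_2$ and $g_3$. Derivatives falling on $B(v,v-v_*;\nabla_vF_{g_1})$ hit $|\e|^{-2}$. Expanding produces products $\prod_j \pt^{\a_{1,j}}_{\b_{1,j}}\e$ divided by powers of $\e$, and this is the origin of the factor $\prod_j(1+\normm{g_1}\cdots)$. By Lemma \ref{lem-disp}(ii), which uses the smallness hypothesis on $g_1$, we have $|\e|\approx1$. By (iii), each factor $\pt^{\a_{1,j}}_{\b_{1,j}}\e$ is bounded by $\norm{\inn{v}^{-r}\inn{\nabla_v}^{|\b_{1,j}|+3/2+\d}\pt^{\a_{1,j}}g_1}_{L^2_v}$, and we bound this by the $D$-norm with $|\b_{1,j}|+1$ velocity derivatives (absorbing $\inn{v}^{1/2}$ into $r$). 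The shift $\g_i=|\b_i|+1$ in \eqref{defsigmai1} is exactly this loss of one velocity derivative. For the difference terms (second and third lines of \eqref{defNg123}) we use Lemma \ref{lem-disp}(iv) to extract a genuine factor of $g_1$, which yields the terms linear in $\normm{g_1}_{\sigma_{1,1},\g_{1,1},r,D}$. For the $v_*$-integrals we follow the homogeneous argument of \cite{duerinckx2023well}. With the kernel bounds in hand, $\mathcal{B}_{\nabla_vF_{g_1}}[\pt g_2]$ is bounded pointwise in $v$ by $\inn{v}^{-1}$ (matrix norm), and its $v\cdot$ component by $\inn{v}^{-2}$, as in \eqref{eq:smooth-A}. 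The $\sqrt{\mu_*}$ weight absorbs any polynomial weight on $g_2$, so $g_2$ can be measured with an arbitrary weight $r\ge0$. Terms with $\sqrt\mu\,\mathcal{B}$ are handled with Lemma \ref{lemma:B_0 - properties}. At this stage, for each $x$, the integrand is bounded by $|h|_{D,\th}\,|\pt g_3|_{D,\th}\,|\pt g_2|_{\cdot,r}\prod_j(\cdots)$.

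\textbf{Integration in $x$.} We use Hölder in $x$: the factor carrying the most derivatives goes in $L^2_x$ and all the others in $L^\infty_x$. The $L^\infty_x$ factors are controlled by $H^{\lfloor d/2+1\rfloor}_x$, which explains the shift $\sigma_i=|\a_i|+\lfloor d/2+1\rfloor$. The threshold $|\a_i|+|\b_i|\ge d/2+5$ is used to certify that at most one factor is of high order, since the total number of derivatives is at most $N=d+7$. That factor keeps $\sigma_i=|\a_i|$ and is put in $L^2_x$, while the others are low order and absorb the embedding. The case distinction for $g_3$ in \eqref{defsigmai} follows the same logic: $g_3$ takes the embedding only when some $g_1$ or $g_2$ factor is of high order. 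When $(\a_3,\b_3)=0$, the $\nabla_v g_3$ present in $N$ accounts for $\g_3=1$.

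\textbf{Main obstacle.} The main obstacle is the weight bookkeeping in the first step. We must check that after integrating by parts, the factor $\pt(\nabla_v g_3)$ or $\pt((\nabla_v+v)g_3)$ pairs with $h$ through an anisotropic matrix bounded by $A$, including both the $P_v$ part (decay $\inn{v}^{-3}$) and the $P_v^\perp$ part. Otherwise only the weaker $\inn{v}^{-3/2}$ control in \eqref{equivD} is available and a half power of $\inn{v}$ is lost. To handle this, we will first try to show that derivatives of $\mathcal{B}_{\nabla_vF_{g_1}}[\cdot]$ retain the degenerate structure $B(v,w)w=0$, so that the $v$-component gains $\inn{v}^{-2}$, mirroring \eqref{eq:smooth-A}. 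We will then use \eqref{eq:hnabAh}-type Cauchy–Schwarz with respect to $A$.
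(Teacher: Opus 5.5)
\textbf{The gap is at top order.} Your skeleton matches the paper's:
\begin{itemize}
\item integrate $(\nabla_v-v)$ by parts onto $\inn{v}^{-2\th}h$;
\item expand $\pt^\a_\b|\e|^{-2}$ by Fa\`a di Bruno;
\item prove a pointwise-in-$x$ bound;
\item treat the $P_v$ component through the degeneracy of the kernel, which is \eqref{equation:v v* relation} (your plan there works, since $v$-derivatives never hit $\d(k\cdot(v-v_*))$);
\item finish with H\"older in $x$ and $H^{\lfloor d/2+1\rfloor}_x\subset L^\infty_x$ on the low-order factors.
\end{itemize}
You bound every factor $\pt^{\a_{1,j}}_{\b_{1,j}}\e$ in $L^\infty_{k,v}$ via \eqref{epsilon - space and velocity derivative - estimate}. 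That costs $|\b_{1,j}|+\frac32+\d$ velocity derivatives of $\pt^{\a_{1,j}}g_1$, while $\normm{g_1}_{|\a_{1,j}|,|\b_{1,j}|,r,D}$ controls only $|\b_{1,j}|+1$ of them (cf.\ \eqref{vdnorm}). So you always get $\g_{1,j}=|\b_{1,j}|+1$.

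The statement, however, requires $(\sigma_{1,j},\g_{1,j})=(|\a|,|\b|)$ when $(|\a_{1,j}|,|\b_{1,j}|)=(|\a|,|\b|)$, and this case does occur. It arises in the Fa\`a di Bruno terms $-|\e|^{-4}(\pt^\a_\b\e)\bar\e$ and $-|\e|^{-4}\e\,\overline{\pt^\a_\b\e}$. It also arises in their analogues with $\pt^\a_\b(\e_{g_1}-\e_\mu)$ in $N_3,N_4$, where \eqref{epsilon differemce - velocity derivative - estimate} has the same cost. A loss here is fatal: for $|\a|+|\b|=N$ you would need $N+1$ derivatives of $g_1$, which $d_N[f]$ does not control. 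Your version therefore does not close the argument of Section \ref{pf of main thm}. Nor can this be fixed by sharpening Lemma \ref{lem-disp}, since a pointwise bound on a Hilbert transform along $\hat k\R$ needs more than $H^{1/2}$ there.

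\textbf{The missing ingredient} is \eqref{T - upper 3} (and \eqref{S - upper 3}). When all of $\pt^\a_\b$ falls on a single $\e$, measure $\pt^\a_\b\e$ in $L^2$ along lines, not in $L^\infty$.
\begin{itemize}
\item For fixed $k$, write $v=u\hat k+\eta$ and $v_*=u_*\hat k+\eta_*$. Then $\d(k\cdot(v-v_*))=|k|^{-1}\d(u-u_*)$ puts $v$ and $v_*$ on the same line.
\item As a function of $u$, $\pt^\a_\b\e$ is the Hilbert transform of $u_{**}\mapsto\int_{\hat k^\perp}\hat k\cdot\nabla_v\pt^\a_\b(\sqrt\mu\, g_1)\,\dd\eta_{**}$. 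Its $L^2_u$-boundedness gives a bound by $\norm{\inn{v}^{-r}\pt^\a\inn{\nabla_v}^{|\b|+1}g_1}_{L^2_v}$, i.e.\ no loss in the $D$-norm.
\item The cost moves to the low-order factors $\mathbf h_2=\nabla_v g_3$ and $g_2$. These are put in $L^\infty$ along $\hat k\R$ via $H^{1/2+\d}\subset L^\infty$; this is why the statement allows $(\sigma_3,\g_3)=(0,1)$ and $\g_2=1$ in that term.
\item The $\inn{v}^{-1}$ weight needed for the $A$-norms is recovered from $\int_{\mathbb S^{d-1}}\inn{\hat k\cdot v}^{-2r_0}\,\dd\sigma(\hat k)\lesssim\inn{v}^{-1}$ for $r_0>\frac12$, together with $V\in\dot H^{1/2}$.
\end{itemize}
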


Proposition \ref{proposition:N upper bound} provide uniform upper bounds on the nonlinear terms that will be sufficient to close the nonlinear iterative scheme developed in the next section. Let us comment on the apparance of the multi-indexes $(\a_i,\b_i)$ and the pairs $(\sigma_i,\gamma_i)$. First, the partitions \eqref{partitionab} are due to the standard Leibniz's rule for derivatives of a product involving three functions $g_1,g_2,g_3$ in $N(g_1,g_2,g_3)$, see \eqref{defNg123}, while $(\alpha_{1,j},\beta_{1,j})$ are due to derivatives of a composite function $ \e (k,k\cdot v; \nabla_{v} F_{g_1})$. As for the corresponding pairs $(\sigma_{i},\gamma_{i})$, either they are $(|\alpha_i|,|\beta_i|+1)$ (i.e. allowing one loss of $v$-derivatives unless they are already top derivatives), or $(|\alpha_i|+\lfloor\frac{d}{2}+1 \rfloor , |\beta_i|+1)$, where the loss of $x$-derivatives is due to the Sobolev embedding $H_x^{\frac{d}{2}+\delta} \subset L^\infty_x$, which were used precisely for the low norm with $\norm{\a_i}+\norm{\b_i} < \frac{d}{2}+5$. Similar definitions apply for $(\sigma_3,\gamma_3)$, except we do not allow the loss of $v$-derivatives, unless when $(\alpha_3,\beta_3)=0$. Finally, we note that we gain an arbitrary decay in $v$ for $g_1$ and $g_2$ thanks to the rapid decay of the Maxwellian $\mu(v)$. The rest of this section is devoted to prove Proposition \ref{proposition:N upper bound}.

\begin{remark}\label{rem-abi}
We note that if $\norm{\a_i}+\norm{\b_i} < \frac{d}{2}+5$, then the corresponding pair $(\sigma_i,\gamma_i)$ satisfies 
\begin{equation}\label{goodsg}
\norm{\sigma_{i}} +\norm{\g_{i}} \le d+6,
\end{equation}
since by construction, $\norm{\sigma_{i}} +\norm{\g_{i}} \le |\alpha_i|+\lfloor\frac{d}{2}+1 \rfloor + |\beta_i|+1 < d+7$ (and $\norm{\sigma_{i}},\norm{\g_{i}}$ are integers). On the other hand, if $\norm{\a_i}+\norm{\b_i} \ge \frac{d}{2}+5$, then $\norm{\sigma_{i}} +\norm{\g_{i}} \le |\alpha|+|\beta|$, since either $(\sigma_i,\gamma_i) = (|\alpha|,|\beta|)$ if $(|\a_i|,|\b_i|) = (|\a|,|\b|)$ or else $\norm{\sigma_{i}} +\norm{\g_{i}} \le |\alpha_i|+|\beta_i|+1 \le |\alpha|+|\beta|$. 

\end{remark}

\subsection{Linear Operator $\mathcal{B}_{\nabla_{v} F}[h]$}

We first study the linear operator $\mathcal{B}_{\nabla_{v} F}[h]$. Indeed, recalling \eqref{kernelB} and \eqref{defcB}, and applying Leibniz's rule, we can decompose the expression as follows:

\begin{equation} \label{B leibniz rule}
\pt_{\b}^{\a} \mathcal{B}_{\nabla_{v} F_{g}}[h] = \sum_{\substack{\a_{1} + \a_{2} = \a\\ \b_{1} + \b_{2} = \b}} \binom{\a}{\a_{1}} \binom{\b}{\b_{1}} G^{\a_1,\b_1,\a_{2},\b_{2}} (g,h)
\end{equation}
where 
\begin{equation}\label{defGOG}
\begin{aligned}
G^{\a_{1},\b_{1},\a_{2},\b_{2}} (g,h) := \iint_{\R^d \times \R^d} (k \otimes k) |\widehat{V}(k)|^2 \d (k \cdot (v-v_{*})) \pt_{\b_{1}}^{\a_{1}} \left( \frac{1}{\norm{\e(k,k\cdot v ; \nabla_{v} F_{g})}^2}\right) \\
\times \pt_{\a_{2}}^{\b_{2}} (\sqrt{\mu_{*}}h_{*}) \dd v_{*} \dd k.
\end{aligned}
\end{equation}
By Fa\`a di Bruno's formula, the derivatives of $|\e(k,k\cdot v;\nabla_{v} F_g)|^{-2}$ are computed by 
\begin{equation}
\begin{aligned}
{\pt_{\b_{1}}^{\a_{1}} \left( \frac{1}{\norm{\e(k,k\cdot v ; \nabla_{v} F_{g})}^2}\right)}
&=\sum_{\substack{ \sum_{i=1}^{s} \eta_{1,1,i} + \sum_{j=1}^{l} \eta_{1,2,j} = \a_{1}\\ \sum_{i=1}^{s} \eta_{2,1,i} + \sum_{j=1}^{l} \eta_{2,2,j} = \b_{1}}}\frac{C_{\eta}}{{\e}^{s+1} \bar{\e}^{l+1}}
\prod_{i=1}^{s} \prod_{j=1}^{l} ({\pt^{\eta_{1,1,i}}_{\eta_{2,1,i}} \e})(\overline{{\pt^{\eta_{1,2,j}}_{\eta_{2,2,j}} \e}}),
\end{aligned}
\end{equation}
for $\e = \e(k,k\cdot v; \nabla_{v}F_{g})$, in which we recall that $\pt^\eta = \pt_x^\eta$ and $\pt_\eta = \pt_v^\eta$. Since the summation derived from Fa\'a di Bruno's formula is too lengthy, we will omit the conditions under the summation sign in such expressions where no confusion arises. For convenience, we set $\Omega_{1} = (\a_{1},\b_{1})$, $\Omega_{2} = (\a_{2},\b_{2})$, and let $\Gamma = \{(\eta_{1,1,1},\eta_{2,1,1}),(\eta_{1,2,1},\eta_{2,2,1}),\cdots \}$ be the collection of all multi-indices. Define
\begin{equation}\label{expGOmega}
G^{\Omega_{1},\Omega_{2}} (g,h) = \sum_{\Gamma} C_{\Omega_{1},\Omega_{2},\Gamma} T^{\Omega_{1},\Omega_{2}}_{\G}(g,h),
\end{equation}
where 
\begin{equation} \label{TAGpi - definition}
\begin{aligned}
T^{\Omega_{1},\Omega_{2}}_{\G}(g,h) :&= \iint_{\R^d \times \R^d} (k \otimes k) |\widehat{V}(k)|^2 \d (k \cdot (v-v_{*})) \\
&\qquad\times \frac{C_{\eta}}{{\e}^{s+1} \bar{\e}^{l+1}}
\prod_{i=1}^{s} \prod_{j=1}^{l} ({\pt^{\eta_{1,1,i}}_{\eta_{2,1,i}} \e})(\overline{{\pt^{\eta_{1,2,j}}_{\eta_{2,2,j}} \e}})
\pt_{\a_{2}}^{\b_{2}} (\sqrt{\mu_{*}}h_{*}) \dd v_{*} \dd k,
\end{aligned}
\end{equation}
for $\e = \e(k,k\cdot v; \nabla_{v}F_{g})$. 
We obtain the following. 

\begin{lemma} \label{lemma:T - upper bound}
(Upper bound of $T^{\Omega_{1},\Omega_{2}}_{\G}$). Let $V \in L^1 \cap \dot{H}^{\frac{1}{2}}$ be symmetric and positive definite. Given $g(x,\cdot),h(x,\cdot) \in L^2_{loc}$, provided $g$ satisfies the following smallness condition, for some $r_0 \ge 0$, $\d >0$, and some large enough constant $C_{0}$,
\begin{equation}\label{smallnessg1}
    \norm{\inn{v}^{-r_{0}} \inn{\nabla_{v}}^{\frac{3}{2}+\d} g(x,\cdot)}_{L^2} \le \frac{1}{C_{0}},
\end{equation}
then, we have for all vector fields $\mathbf{h}_{1}$, $\mathbf{h}_{2}$, for all multi-index $\Omega_{1}$, $\Omega_{2}$, $\G$ and real number $r \ge 0$, and $\th \in \R$
\begin{equation}\label{T - upper 1}
\begin{aligned}
&\norm{\int_{\R^d} \inn{v}^{-2\th} \mathbf{h}_{1} \cdot T^{\Omega_{1},\Omega_{2}}_{\G}(g,h) \mathbf{h}_{2} \dd v} \\
&\lesssim 
\norm{\mathbf{h}_{1}}_{0,0,\th,A} \norm{\mathbf{h}_{2}}_{0,0,\th,A} \norm{h}_{\norm{\a_{2}},\norm{\b_{2}}+1,r,L^2} \prod_{(\eta_{1,i,j},\eta_{2,i,j}) \in \G}(1+\norm{g}_{\norm{\eta_{1,i,j}},\norm{\eta_{2,i,j}}+2,r,L^2}).
\end{aligned}
\end{equation}
Alternatively, we can exchange one derivative of $h$ with one derivative of $\mathbf{h}_{2}$, namely
\begin{equation}\label{T - upper 2}
\begin{aligned}
&\norm{\int_{\R^d}\inn{v}^{-2\th} \mathbf{h}_{1} \cdot T^{\Omega_{1},\Omega_{2}}_{\G}(g,h) \mathbf{h}_{2} \dd v} \\
&\lesssim 
\norm{\mathbf{h}_{1}}_{0,0,\th,A} \norm{\mathbf{h}_{2}}_{0,1,\th,A} \norm{h}_{\norm{\a_{2}},\norm{\b_{2}},r,L^2} \prod_{(\eta_{1,i,j},\eta_{2,i,j}) \in \G}(1+\norm{g}_{\norm{\eta_{1,i,j}},\norm{\eta_{2,i,j}}+2,r,L^2}).
\end{aligned}
\end{equation}
In particular, when all the derivatives hit $\epsilon$ or $\bar{\epsilon}$, namely $(s,l)=(1,0)$ or $(s,l)=(0,1)$ and $\eta_{1,1,1} = \a, ~ \eta_{2,1,1} = \b$ or $\eta_{1,2,1} = \a,~  \eta_{2,2,1} = \b$, we may exchange one derivative of $h$ with one derivative of $g$, namely
\begin{equation}\label{T - upper 3}
\begin{aligned}
\norm{\int_{\R^d}\inn{v}^{-2\th} \mathbf{h}_{1} \cdot T^{\Omega_{1},\Omega_{2}}_{\G}(g,h) \mathbf{h}_{2} \dd v} \lesssim 
\norm{\mathbf{h}_{1}}_{0,0,\th,A} \norm{\mathbf{h}_{2}}_{0,1,\th,A} 
\norm{h}_{0,1,r,L^2} \norm{g}_{\norm{\a},\norm{\b}+1,r,L^2}.
\end{aligned}
\end{equation}
\end{lemma}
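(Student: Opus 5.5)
The plan is to reduce to the homogeneous-type estimates of \cite{duerinckx2023well}, working pointwise in $x$ and treating the $\e$-factors as bounded multipliers. First, by Lemma \ref{lem-disp}(ii) and the smallness condition \eqref{smallnessg1}, $|\e(k,k\cdot v;\nabla_v F_g)|\approx 1$ uniformly in $k,v$, so $|\e|^{-s-1}|\bar\e|^{-l-1}\lesssim 1$. Next, by Lemma \ref{lem-disp}(iii) with $\delta<1/2$, each factor satisfies
\[
|\pt^{\eta_1}_{\eta_2}\e|\lesssim 1+\norm{\inn{v}^{-r}\inn{\nabla_v}^{|\eta_2|+\frac32+\delta}\pt^{\eta_1}g}_{L^2_v}\lesssim 1+\norm{g}_{|\eta_1|,|\eta_2|+2,r,L^2},
\]
uniformly in $(k,v)$. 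This gives the product factor in \eqref{T - upper 1}–\eqref{T - upper 2}. What remains inside $T^{\Omega_1,\Omega_2}_\G$ is then a kernel of the form $\iint (k\otimes k)|\widehat V(k)|^2\delta(k\cdot(v-v_*))\,m(k,v)\,\pt^{\a_2}_{\b_2}(\sqrt{\mu_*}h_*)\,\dd v_*\dd k$, where $m$ is a bounded multiplier. The difficulty is that $m$ is complex and not signed, so we cannot compare directly with $A$ by positivity.

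To handle this, I would bound the quadratic form directly:
\[
|\mathbf h_1\cdot T\mathbf h_2|\le \|m\|_\infty\int |\widehat V(k)|^2\,|k\cdot\mathbf h_1|\,|k\cdot\mathbf h_2|\int\delta(k\cdot(v-v_*))\,|\pt^{\a_2}_{\b_2}(\sqrt{\mu_*}h_*)|\,\dd v_*\,\dd k .
\]
Then I would apply Cauchy–Schwarz in $k$. This reduces the problem to showing that the matrix
\[
\mathcal{M}[\phi](v)=\iint (k\otimes k)|\widehat V|^2\delta(k\cdot(v-v_*))|\phi_*|\,\dd v_*\dd k,\qquad \phi=\pt^{\a_2}_{\b_2}(\sqrt{\mu}h),
\]
satisfies $e\cdot\mathcal M[\phi]e\lesssim \|\phi\|_{*}\,(\langle v\rangle^{-1}|P_v^\perp e|^2+\langle v\rangle^{-3}|P_ve|^2)$, which by Lemma \ref{lemma:A - properties} is $\lesssim \|\phi\|_*\, e\cdot Ae$. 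The weight $\inn{v}^{-2\th}$ is a scalar and just rides along, producing the $\th$-weighted $A$-norms.

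The \textbf{main obstacle} is this bound on $\mathcal M$: it requires controlling the hyperplane integral $\int_{k\cdot v_*=k\cdot v}|\phi|$. Pointwise in $v$, this needs $\phi$ in a trace-type space, and this is where the extra $v$-derivative $\|h\|_{|\a_2|,|\b_2|+1,r,L^2}$ is spent. I would follow Lemma 2.2/2.3 of \cite{duerinckx2023well}: write $\delta(k\cdot w)$ via Fourier in the direction $\hat k$, use a one-dimensional Sobolev trace bound $\sup_s\int_{\hat k\cdot v_*=s}|\phi|\lesssim \|\inn{v}^{N}\phi\|_{L^1}+\|\inn v^N\nabla\phi\|_{L^1}$, and bound the result by $\|\inn{v}^{-r}h\|_{H^{|\b_2|+1}}$ using the Maxwellian decay. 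For the anisotropic weights ($\langle v\rangle^{-3}$ along $v$), I would exploit the decay of $\sqrt\mu$ together with $|k\cdot e|$, exactly as in the derivation of the coercivity of $A$; the $\dot H^{1/2}$ condition on $V$ controls the resulting $k$-integral $\int|\widehat V|^2|k|\,\dd k$.

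For \eqref{T - upper 2}, instead of spending a derivative on the trace, I would integrate by parts along the direction of $\delta'$. Concretely, writing $\delta(k\cdot(v-v_*))$ and moving one $v_*$-derivative off $\phi$ onto the $v$-variable, the derivative lands on $\mathbf h_2$ (and on $m$ and the weights, which are controlled by the above bounds with one more index, absorbed into the $+2$). This gives $\norm{\mathbf h_2}_{0,1,\th,A}$ in place of the extra derivative on $h$. For \eqref{T - upper 3}, the only $\e$ factor carries all of $(\a,\b)$, so $\Omega_2=0$. I would apply the same trade, but now move half a derivative from the $g$-norm of $\pt^\a_\b\e$: by Lemma \ref{lem-disp}(iii) it costs $\norm{g}_{|\a|,|\b|+2}$, and I would aim instead for $|\b|+1$ by taking the $\e$-factor in $L^2_k$ rather than $L^\infty$ and using the $\dot H^{1/2}$-smoothing of $\widehat V$. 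I expect this last refinement to be delicate and would check it against Step 2 of Lemma 2.1 in \cite{duerinckx2023well}.
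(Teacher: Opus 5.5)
Your route to \eqref{T - upper 1} is sound, and it differs from the paper's. You apply Cauchy--Schwarz pointwise against $\mathcal M[|\phi|]$ and use a weighted trace bound, whereas the paper integrates out $k$ first. For your version to produce the $A$-weights, you need the hyperplane integrals to decay like $\inn{s}^{-M}$ in $s=\hat k\cdot v$. A bare $\sup_s$ bound only yields $\int|\widehat V|^2|k|\,\dd k$, with no decay in $v$. With the decay, the angular average over $\hat k$ gives $\inn{v}^{-1}|P_v^\perp e|^2+\inn{v}^{-3}|P_ve|^2$.

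The genuine gap is \eqref{T - upper 2}. Once the Dirac mass is resolved, $v$ and $v_*$ are coupled only through $u=\hat k\cdot v$. Moving the $\hat k$-derivative off $\phi$ by integration by parts therefore differentiates the entire $v$-side $\inn{v}^{-2\th}(k\cdot\mathbf h_1)(k\cdot\mathbf h_2)\,m(k,v)$. It lands on $\mathbf h_1$ as well, which \eqref{T - upper 2} does not permit. When it lands on a factor $\pt^{\eta_1}_{\eta_2}\e$ of $m$, Lemma \ref{lem-disp}(iii) charges $|\eta_2|+\frac52+\d$ derivatives of $g$, so it is not absorbed into the $+2$.

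What you are missing is that the derivative in \eqref{T - upper 1} is not intrinsic. It is an artifact of bounding each hyperplane integral by its supremum before averaging over directions. Integrating in $k$ first gives $\int(k\otimes k)|\widehat V(k)|^2\d(k\cdot w)\,\dd k=c_V|w|^{-1}P_w^\perp$. Hence $|T|\lesssim\int|v-v_*|^{-1}|\pt^{\a_2}_{\b_2}(\sqrt{\mu_*}h_*)|\,\dd v_*$ times the $g$-product. For $d\ge3$ this kernel is in $L^2_{loc}$ and no derivative is needed at all. For $d=2$ the paper puts this integral in $L^4_v$ by Hardy--Littlewood--Sobolev, then uses H\"older $L^2\times L^4\times L^4$ with $H^1(\R^2)\subset L^4$ applied to $\mathbf h_2$ only. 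Within your own framework, you could instead keep the $\hat k$-marginal of $\phi$ in $L^2_u$, which costs no derivative, and put $\mathbf h_2$ in $L^\infty_u$ via $H^{\frac12+\d}(\R)\subset L^\infty$.

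For \eqref{T - upper 3} you have not given an argument. The mechanism you point to, an $L^2_k$ bound on the $\e$-factor with $\dot H^{\frac12}$-smoothing of $\widehat V$, is not what saves the half derivative. The assumption $V\in\dot H^{\frac12}$ only serves to make $\int|k||\widehat V(k)|^2\,\dd k$ finite. The actual point is that $\pt^\a_\b\e=\widehat V(k)\int\frac{\hat k\cdot\nabla_v\pt^\a_\b(\sqrt\mu g)_{**}}{\hat k\cdot(v-v_{**})-i0^+}\,\dd v_{**}$ is a Hilbert transform, in the single variable $u=\hat k\cdot v$, of the $\hat k$-marginal of $\hat k\cdot\nabla_v\pt^\a_\b(\sqrt\mu g)$. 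Write $v=u\hat k+\eta$ and $v_*=u_*\hat k+\eta_*$; the Dirac mass forces $u_*=u$. One then takes:
\begin{itemize}
\item this factor in $L^2_u$, where the Hilbert transform is bounded on $L^2(\R)$, so the cost is exactly $|\b|+1$ derivatives of $g$;
\item $\mathbf h_1$ in $L^2$;
\item $\mathbf h_2$ and $\sqrt\mu h$ in $L^\infty_u$ via $H^{\frac12+\d}(\R)\subset L^\infty$.
\end{itemize}
Weights $\inn{\hat k\cdot v}^{\mp r_0}$ with $r_0>\frac12$, together with $\int_{\mathbb S^{d-1}}\inn{\hat k\cdot v}^{-2r_0}\,\dd\sigma(\hat k)\lesssim\inn{v}^{-1}$, then recover the $A$-weights. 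The pointwise bound of Lemma \ref{lem-disp}(iii) must be avoided here precisely because it is an $L^\infty_u$ bound, and therefore costs the extra $\frac12+\d$.
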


In the norms appearing in the lemma, it is important to note that only the number of $v$-derivatives increases, while the number of $x$-derivatives remains unchanged. This is because our lemma is based on pointwise estimates in $x$, and thus it does not affect the structural properties related to $x$. Another crucial point is that, through this lemma, we can establish the $A$-norm for $\mathbf{h}_1$ and $\mathbf{h}_2$ with respect to the weights prescribed in the equation, while for $g$ and $h$, we obtain a decay of $\langle v \rangle^{-r}$ for any positive $r$. The sufficient decay of $g$ and $h$ stems from the structural fact that they are multiplied by the Maxwellian.

Furthermore, the structure of the three inequalities presented in the lemma is as follows. Examining how much the velocity derivatives of $\mathbf{h}_2$, $h$, and $g$ increase within the norm reveals that: in the \eqref{T - upper 1}, the derivative of $\mathbf{h}_2$ remains unchanged, while the derivatives of $h$ and $g$ increase by one and two, respectively. In the \eqref{T - upper 2}, the derivative of $h$ remains unchanged, while the derivatives of $\mathbf{h}_2$ and $g$ increase by one and two, respectively. Finally, in the \eqref{T - upper 3}, the derivative of $g$ increases by one, and the derivatives of $\mathbf{h}_2$ and $h$ each increase by one. Each inequality is applied when the highest derivative hits $\mathbf{h}_2$, $h$, and $g$, respectively.
In addition, when $\mathbf{h}_1$ and $\mathbf{h}_2$ are given as vector fields in the form of $v f$ (as seen in \eqref{defNg123}, where they are defined via $\nabla f$ and $v f$), the presence of $k \otimes k$ and $\delta(k \cdot (v-v_{*}))$ in the collision kernel $B$ allows us to exchange $v$ for $v_{*}$ during the integration process. This implies that the weight can be transferred to $h$, thereby yielding an additional decay of $\inn{v}$ for vector fields given in the form $v f$.

\begin{proof}
[Proof of Lemma \ref{lemma:T - upper bound}]
First, we will prove \eqref{T - upper 1}.
Recalling $\e = \e(k,k\cdot v; \nabla_{v}F_{g})$ and using Lemma \ref{lem-disp}, precisely \eqref{epsilon - space and velocity derivative - estimate}, we bound 
\begin{equation*}
\begin{aligned}
&\norm{\prod_{i=1}^{s} \prod_{j=1}^{l} ({\pt^{\eta_{1,1,i}}_{\eta_{2,1,i}} \e})(\overline{{\pt^{\eta_{1,2,j}}_{\eta_{2,2,j}} \e}})} \\
&\lesssim \prod_{i=1}^{s} \prod_{j=1}^{l}
\left(1+\norm{\pt^{\eta_{1,1,i}}\inn{\nabla_{v}}^{\norm{\eta_{2,1,i}}+\frac{3}{2}+ \d}g}_{r,L^2}\right) \left(1+\norm{\pt^{\eta_{1,2,j}}\inn{\nabla_{v}}^{\norm{\eta_{2,2,j}}+\frac{3}{2}+ \d}g}_{r,L^2}\right) ,
\end{aligned}
\end{equation*}
for any $\delta>0$ and $r \in \R$. Take $\delta=1/2$. Combining and recalling the norm $\norm{\cdot}_{N,M,r,L^2}$ in \eqref{weighted norm definition}, we thus have 
\begin{equation}\label{bdprod2}
\begin{aligned}
\norm{\prod_{i=1}^{s} \prod_{j=1}^{l} ({\pt^{\eta_{1,1,i}}_{\eta_{2,1,i}} \e})(\overline{{\pt^{\eta_{1,2,j}}_{\eta_{2,2,j}} \e}})}
&\lesssim 
\prod_{(\eta_{1,i,j},\eta_{2,i,j}) \in \G}(1+\norm{g}_{\norm{\eta_{1,i,j}},\norm{\eta_{2,i,j}}+2,r,L^2}).
\end{aligned}
\end{equation}

Next, writing $k = \frac{k\cdot v}{|v|^2}v + k^\perp$ and recalling that $ \d (k \cdot v) $ is the one-dimensional Dirac delta function, we compute 
\begin{align*}
\int_{\R^d} {(k \otimes k) |\widehat{V}(k)|^2 \d (k \cdot v)} \dd k =& \frac{1}{\norm{v}}\int_{\R^d} {(k \otimes k) |\widehat{V}(k)|^2 \d (k \cdot \frac{v}{\norm{v}})}  \dd k \\
=& \frac{1}{\norm{v}}\int_{k^\perp} {(k^\perp \otimes k^\perp) |\widehat{V}(k^\perp)|^2}  \dd k^\perp
= \frac{c_V}{\norm{v}}P^{\perp}_{v},
\end{align*}
for constant $c_V = (2\pi)^{d-2}\int_0^\infty |\hat V(r)| r^{d-2}\; dr$, where $P_v^\perp = \mathbb{I} -\frac{v\otimes v}{|v|^2}.$ Similarly, for any function $\varphi(k)$, we bound 
\begin{align*}
\Big|\int_{\R^d} {(k \otimes k) |\widehat{V}(k)|^2 \d (k \cdot v)} \varphi(k) \dd k\Big| 
&=  \frac{1}{\norm{v}}\Big|\int_{k^\perp} {(k^\perp \otimes k^\perp) |\widehat{V}(k^\perp)|^2} \varphi(k^\perp) \dd k^\perp\Big|
\\
&\le  \frac{c_V}{\norm{v}} \sup_k|\varphi(k)|.  
\end{align*}
Using this, we shall now bound $T^{\Omega_{1},\Omega_{2}}_{\G}$. Indeed, recalling \eqref{TAGpi - definition} and using \eqref{bdprod2}, we bound 
\begin{equation}\label{TAGPi step 0 bound}
\begin{aligned}
\norm{T^{\Omega_{1},\Omega_{2}}_{\G}(g,h)} 
\lesssim & \int_{\R^d} \norm{v-v_{*}}^{-1}  \norm{\pt^{\g_{1}}_{\g_{2}}(\sqrt{\mu_{*}} h_{*}) } \dd v_{*}
\times \prod_{(\eta_{1,i,j},\eta_{2,i,j}) \in \G}(1+\norm{g}_{\norm{\eta_{1,i,j}},\norm{\eta_{2,i,j}}+2,r,L^2}). 
\end{aligned}
\end{equation}
We claim that for any $\delta>0$ and $r\ge 0$, 
\begin{equation}\label{inth}
\int_{\R^d} \norm{v-v_{*}}^{-1}  \norm{\pt^{\g_{1}}_{\g_{2}}(\sqrt{\mu_{*}} h_{*}) } \dd v_{*}
\lesssim
\inn{v}^{-1} \norm{\inn{v}^{-r} \inn{\nabla_{v}}^{\norm{\g_{2}}+\d} \pt^{\g_{1}}h}_{L^2_{v}}.
\end{equation}
Indeed, using the fact that $\mu(v)$ decays rapidly in $v$, we bound 
$$ \int_{\R^d} \norm{v-v_{*}}^{-1}  \norm{\pt^{\g_{1}}_{\g_{2}}(\sqrt{\mu_{*}} h_{*}) } \dd v_{*} \lesssim \Big( \int_{\R^d} \norm{v-v_{*}}^{-p'} \langle v_*\rangle^{-d-1}\Big)^{1/p'}\norm{\inn{v}^{-r} \inn{\nabla_{v}}^{\norm{\g_{2}}} \pt^{\g_{1}}h}_{L^p_{v}}$$
for any $r\ge 0$ and any pair $p, p'$ so that $\frac{1}{p}+\frac{1}{p'}=1$. Now, for any small $\delta>0$, we take $p = 2 + \frac{2\delta}{d-2\delta}$ and $p' = 2 - \frac{2\delta}{d}$, so that the above integral in $v_*$ is finite. Using the Sobolev embedding $H^\delta \subset L^{2 +\frac{2\delta}{d-2\delta}}$, we thus obtain the claim \eqref{inth}, without the pre-factor $\langle v\rangle^{-1}$. To include this pre-factor, it suffices to consider the case when $|v|\ge 1$, and use  
\begin{equation} \label{ineq:v-v*split}
|v-v_*|^{-1} \le 2|v|^{-1} + 2|v|^{-1} \langle v_*\rangle|v-v_*|^{-1}.
\end{equation}
The last inequality is direct by considering $|v-v_*|\ge |v|/2$ and $|v-v_*|\le |v|/2$, and noting that $|v_*|\ge |v|/2$ in the latter case. Therefore, repeating the above calculations, we obtain the claim \eqref{inth} as stated. 

As a result of \eqref{TAGPi step 0 bound} and \eqref{inth}, we have obtained 
\begin{equation} \label{TAGpi step 1 bound}
\begin{aligned}
\norm{T^{\Omega_{1},\Omega_{2}}_{\G}(g,h)} \lesssim \inn{v}^{-1} \norm{\inn{v}^{-r} \pt^{\g_{1}}\inn{\nabla_{v}}^{\norm{\g_{2}}+\d} h}_{L^2_{v}}  \prod_{(\eta_{1,i,j},\eta_{2,i,j}) \in \G}(1+\norm{g}_{\norm{\eta_{1,i,j}},\norm{\eta_{2,i,j}}+2,r,L^2})
,\end{aligned}
\end{equation}
for any $\delta>0$ and $r\ge 0$. Therefore, taking $\delta=1$, for all vector fields $\mathbf{h}_{1}, \mathbf{h}_{2}$, we can 
bound 
\begin{equation}\label{bdhT12}
\begin{aligned}
&\norm{\int_{\R^3}\inn{v}^{-2\th}\mathbf{h}_{1} \cdot T^{\Omega_{1},\Omega_{2}}_{\G}(g,h) \mathbf{h}_{2} \dd v} \\
&\lesssim 
\norm{\inn{v}^{-\frac{1}{2}}\inn{v}^{-\th}\mathbf{h}_{1}}_{L^2_{v}} \norm{\inn{v}^{-\frac{1}{2}} \inn{v}^{-\th}\mathbf{h}_{2}}_{L^2_{v}} \norm{h}_{\norm{\g_{1}},\norm{\g_{2}}+1,r,L^2} \prod_{(\eta_{1,i,j},\eta_{2,i,j}) \in \G}(1+\norm{g}_{\norm{\eta_{1,i,j}},\norm{\eta_{2,i,j}}+2,r,L^2}).
\end{aligned}
\end{equation}
In order to deduce \eqref{T - upper 1}, we need to obtain bounds in terms of the $A$-norm. To this end, we write 
\begin{equation}\label{dechj} \mathbf{h}_{j} = P_v\mathbf{h}_{j} + P_v^\perp\mathbf{h}_{j} \end{equation}
for $j=1,2$, where $P_v, P_v^\perp$ are orthogonal projections onto $v$ and $v^\perp$, respectively, see \eqref{eq:def-projv}. For $P_v^\perp$ components, using \eqref{eq:equiv-L2A}, we bound  
$$
\norm{\inn{v}^{-\frac{1}{2}}\inn{v}^{-\th}P_v^\perp\mathbf{h}_{j}}_{L^2_{v}} \le \norm{\mathbf{h}_{j}}_{0,0,\th,A}.$$
For $P_v$ components, we note that  
due to the presence of the Dirac delta $\d (k \cdot (v-v_{*}))$ in the collision kernel, we may write  
\begin{equation} \label{equation:v v* relation}
\begin{split}
(k \otimes k) v \d (k \cdot (v-v_{*})) &= (k \otimes k) v_{*} \d (k \cdot (v-v_{*})), 
\\
v\cdot(k \otimes k)  \d (k \cdot (v-v_{*})) &= v_*\cdot (k \otimes k) \d (k \cdot (v-v_{*})).
\end{split}
\end{equation}
Namely, the very same bound \eqref{TAGpi step 1 bound} remains to hold when $T^{\Omega_{1},\Omega_{2}}_{\G}$ is replaced by $T^{\Omega_{1},\Omega_{2}}_{\G}v$ or $v\cdot T^{\Omega_{1},\Omega_{2}}_{\G}$, upon using the rapid decay of $\mu(v_*)$ in $v_*$. As a result, we gain an extra factor of $\langle v\rangle^{-1}$ for the terms involving $P_v$, which are sufficient to deduce the bounds in the $A$-norm, thanks to \eqref{eq:equiv-L2A}. This yields  
\begin{equation*}
\begin{aligned}
&\norm{\int_{\R^3}\inn{v}^{-2\th}\mathbf{h}_{1} \cdot T^{\Omega_{1},\Omega_{2}}_{\G}(g,h) \mathbf{h}_{2} \dd v} \\
&\lesssim 
\norm{\mathbf{h}_{1}}_{0,0,\th,A} \norm{\mathbf{h}_{2}}_{0,0,\th,A} \norm{h}_{\norm{\a_{2}},\norm{\b_{2}}+1,r,L^2} \prod_{(\eta_{1,i,j},\eta_{2,i,j}) \in \G}(1+\norm{g}_{\norm{\eta_{1,i,j}},\norm{\eta_{2,i,j}}+2,r,L^2}),
\end{aligned}
\end{equation*}
giving \eqref{T - upper 1} as stated.

Next, we prove \eqref{T - upper 2}; namely, we may exchange one derivative of $h$ with one derivative of $\mathbf{h}_{2}$ in the previous estimates. Precisely, recalling \eqref{TAGPi step 0 bound}, in the case $d \ge 3$, the term $|v - v_*|^{-1}$ belongs to $L^2_{loc}$. Thus, we do not need to lose any derivatives in \eqref{TAGpi step 1 bound}, and we obtain the following result, consistent with the previous step:
\begin{equation*}
\begin{aligned}
&\norm{\int_{\R^3}\inn{v}^{-2\th}\mathbf{h}_{1} \cdot T^{\Omega_{1},\Omega_{2}}_{\G}(g,h) \mathbf{h}_{2} \dd v} \\
&\lesssim  \norm{\mathbf{h}_{1}}_{0,0,\th,A} \norm{\mathbf{h}_{2}}_{0,0,\th,A} \norm{h}_{\norm{\a_{2}},\norm{\b_{2}},r,L^2} \prod_{(\eta_{1,i,j},\eta_{2,i,j}) \in \G}(1+\norm{g}_{\norm{\eta_{1,i,j}},\norm{\eta_{2,i,j}}+2,r,L^2}).
\end{aligned}
\end{equation*}
which in particular yields \eqref{T - upper 2}, since $\norm{\mathbf{h}_{2}}_{0,0,\th,A}  \le \norm{\mathbf{h}_{2}}_{0,1,\th,A} $. On the other hand, in the case $d=2$, the mapping $v_{*} \mapsto |v-v_{*}|^{-1}$ does not belong to $L^2_{\text{loc}}(\mathbb{R}^d)$. Therefore, by the Hardy-Littlewood-Sobolev inequality, we obtain the following estimate:
\begin{align*}
\norm{\int_{\R^2} \norm{\cdot-v_{*}}^{-1}  \norm{\pt^{\a_{2}}_{\b_{2}}(\sqrt{\mu_{*}} h_{*}) } \dd v_{*}}_{L^4} \lesssim \norm{\pt^{\a_{2}}_{\b_{2}}(\sqrt{\mu_{*}} h_{*}) }_{L^{4/3}} \lesssim  \sum_{\g \le \b_{2}}\norm{\langle v_*\rangle^{-r}\pt^{\a_{2}}_{\g} h_{*} }_{L^2},
\end{align*}
upon using the rapid decay of $\mu(v)$ in $v$. 
Therefore, recalling \eqref{TAGPi step 0 bound}, we obtain 
$$
\begin{aligned}
\norm{T^{\Omega_{1},\Omega_{2}}_{\G}(g,h)}_{L^4_v} 
\lesssim & \norm{h}_{\norm{\a_{2}},\norm{\b_{2}},r,L^2} \prod_{(\eta_{1,i,j},\eta_{2,i,j}) \in \G}(1+\norm{g}_{\norm{\eta_{1,i,j}},\norm{\eta_{2,i,j}}+2,r,L^2}). 
\end{aligned}
$$
Next, following a similar argument as done for \eqref{bdhT12}, we obtain 
\begin{equation*}
\begin{aligned}
&\norm{\int_{\R^2}\inn{v}^{-2\th}\mathbf{h}_{1} \cdot T^{\Omega_{1},\Omega_{2}}_{\G}(g,h) \mathbf{h}_{2} \dd v}
\\&\lesssim 
\norm{\inn{v}^{-\th} \inn{v}^{-\frac{1}{2}}\mathbf{h}_{1}}_{L^2_v}
\norm{\inn{v}^{-\th} \inn{v}^{-\frac{1}{2}}\mathbf{h}_{2}}_{L^4_v} \norm{T^{\Omega_{1},\Omega_{2}}_{\G}(g,h)}_{L^4_v}\\
&\lesssim
\norm{\inn{v}^{-\th} \inn{v}^{-\frac{1}{2}}\mathbf{h}_{1}}_{L^2_v}
\norm{\inn{v}^{-\th} \inn{v}^{-\frac{1}{2}}\mathbf{h}_{2}}_{H^1_v}  \norm{h}_{\norm{\a_{2}},\norm{\b_{2}},r,L^2} \prod_{(\eta_{1,i,j},\eta_{2,i,j}) \in \G}(1+\norm{g}_{\norm{\eta_{1,i,j}},\norm{\eta_{2,i,j}}+2,r,L^2}),
\end{aligned}
\end{equation*}
in which we have used the Sobolev embedding $\norm{\inn{v}^{-\th} \inn{v}^{-\frac{1}{2}}\mathbf{h}_{2}}_{L^4_v} 
\lesssim \norm{\inn{v}^{-\th} \inn{v}^{-\frac{1}{2}}\mathbf{h}_{2}}_{H^1_v}$ which is valid in $\R^2$ with weights $\langle v\rangle^a$ since $\partial_v \langle v\rangle^a \lesssim \langle v\rangle^a$ for any $a\in \R$. Finally, to deduce the bounds in terms of the $A$-norm, we proceed as in the previous case, namely decomposing $\mathbf{h}_j$ as in \eqref{dechj} and using \eqref{equation:v v* relation}. This yields \eqref{T - upper 2}.

Finally, we focus on \eqref{T - upper 3}. Note that this is the case when all the derivative act on the $\e$ with $\eta_{1,1,1} = \a, ~ \eta_{2,1,1} = \b$ or $\eta_{1,2,1} = \a,~  \eta_{2,2,1} = \b$. Precisely, we have  
\begin{equation*}
T^{\Omega_{1},\Omega_{2}}_{\G}(g,h) =
\begin{cases}
& -\iint_{\R^d \times \R^d} (k \otimes k)  \widehat{V}(k)^2 \d (k \cdot (v-v_{*})) \frac{1}{\norm{\e}^{4}} 
({\pt^{\a}_{\b} \e})(\overline{ \e})
(\sqrt{\mu_{*}}h_{*}) \dd v_{*} \dd k,
\\
&
-\iint_{\R^d \times \R^d} (k \otimes k) \widehat{V}(k)^2 \d (k \cdot (v-v_{*})) \frac{1}{\norm{\e}^{4}} 
({\e})(\overline{ \pt^{\a}_{\b} \e})
(\sqrt{\mu_{*}}h_{*}) \dd v_{*} \dd k.
\end{cases}
\end{equation*}
The two cases are similar, and we will only consider the first case. By the definition of $\e$, see \eqref{def-efunction}, we have
\begin{equation*}
\pt^{\a}_{\b} \e = \widehat{V}(k)\int_{\R^d} \frac{\hat{k}\cdot \nabla_{v}\pt_{\b}^{\a}(\sqrt{\mu} g)_{**}}{\hat{k} \cdot (v-v_{**})- i0^+} \dd v_{**},
\end{equation*}
with $\hat{k} = k/\norm{k}$. 
Thus, we write 
\begin{equation}\label{writeTh12}
\begin{aligned}
\int_{\R^3}\inn{v}^{-2\th}\mathbf{h}_{1} \cdot T^{\Omega_{1},\Omega_{2}}_{\G}(g,h)\mathbf{h}_{2} \dd v = \int_{\R^d} (k \otimes k)  \widehat{V}(k)^2 \Big( \iint_{\R^d\times \R^d}\d (k \cdot (v-v_{*})) \mathcal{E}(k,v,v_*)
\; \dd v \dd v_* \Big) \; \dd k
\end{aligned}
\end{equation} 
in which $\mathcal{E}(k,v,v_*)$ satisfies 
\begin{equation}\label{bdEc}|\mathcal{E}(k,v,v_*)
| \lesssim \norm{\inn{v}^{-\th}\mathbf{h}_{1}} \norm{\inn{v}^{-\th}\mathbf{h}_{2}} \sqrt{\mu_{*}} \norm{h_{*}}  \norm{\int_{\R^d} \frac{\hat{k}\cdot \nabla_{v}\pt_{\b}^{\a}(\sqrt{\mu} g)_{**}}{\hat{k} \cdot (v-v_{**})- i0^+} \dd v_{**}}. 
\end{equation}
Fix $k\in \R^d \setminus\{0\}$, and set $\hat{k} = k/\norm{k}$. We first study the integrals over $v,v_*\in\R^d$ in the expression \eqref{writeTh12}. Indeed, we may write $\R^d = \hat k\R\oplus\hat k^\bot$ with the decomposition $v= u\hat{k}+ \eta$ and $v_{*}= u_{*}\hat{k}+ \eta_{*}$ for $u,u_*\in \R$ and $\eta, \eta_* \in k^\bot$. Note that the maps $v\mapsto (u,\eta)$ and $v_*\mapsto (u_*,\eta)$ have Jacobian determinant equal to one. Therefore, recalling that $\d (\cdot)$ is the one-dimensional Dirac delta function, we compute 
$$
\begin{aligned} 
\iint_{\R^d\times \R^d}\d (k \cdot (v-v_{*})) \mathcal{E}(k,v,v_*)
\; \dd v \dd v_* 
&= \iint_{(\hat k\R\oplus\hat k^\bot)^2}\frac{1}{|k|}\d (u-u_*) \mathcal{E}(k,u\hat{k}+ \eta,u_*\hat{k}+ \eta_*)
\;  \dd u \dd\eta \dd u_{*} \dd \eta_{*}
\\
&= \frac{1}{|k|}\iint_{\R\times k^\perp\times k^\perp}\mathcal{E}(k,u\hat{k}+ \eta, u\hat{k}+ \eta_*)
\;  \dd u \dd\eta \dd \eta_{*}.
\end{aligned}$$
Therefore, for all $r_{0}\ge0$, using \eqref{bdEc} and the H\"older's inequality, we bound 
\begin{equation*}
\begin{aligned}
&\Big| \iint_{\R^d\times \R^d}\d (k \cdot (v-v_{*})) \mathcal{E}(k,v,v_*)
\; \dd v \dd v_* \Big|
\\
&\le \frac{1}{\norm{k}} \iint_{\R\times k^\perp\times k^\perp} 
\norm{\inn{u\hat{k}+\eta}^{-\th}\mathbf{h}_{1}(u,\eta)} \norm{\inn{u\hat{k}+\eta_{}}^{-\th}\mathbf{h}_{2}(u,\eta_{})} \\
&\qquad \times
(\sqrt{\mu} \norm{h})(u,\eta_{*}) \norm{\int_{\R^d} \frac{\hat{k}\cdot \nabla_{v}\pt_{\b}^{\a}(\sqrt{\mu} g)_{**}}{u-\hat{k} \cdot v_{**}- i0^+} \dd v_{**}} \dd u \dd\eta \dd \eta_{*} \\
&\lesssim \frac{1}{\norm{k}} \norm{\inn{\hat{k}\cdot v}^{-r_{0}} \inn{v}^{-\th}\mathbf{h}_{1}}_{L^2(\hat{k}^{\perp};L^2(\hat{k}\R))} \norm{\inn{\hat{k}\cdot v}^{-r_{0}} \inn{v}^{-\th}\mathbf{h}_{2}}_{L^2(\hat{k}^{\perp};L^\infty(\hat{k}\R))} 
\\
&\qquad \times \norm{\inn{\hat{k}\cdot v}^{2 r_{0}} \sqrt{\mu}h}_{L^1(\hat{k}^{\perp};L^\infty(\hat{k}\R))} \norm{\int_{\R^d} \frac{\hat{k}\cdot \nabla_{v}\pt_{\b}^{\a}(\sqrt{\mu} g)_{**}}{u - \hat{k} \cdot v_{**}- i0^+} \dd v_{**}}_{L^2_u(\hat{k}\R)},
\end{aligned}
\end{equation*}
in which $v= u\hat{k}+ \eta$ and $v_{*}= u_{*}\hat{k}+ \eta_{*}$. Now, using the boundedness of the Hilbert transform in $L^2(\hat k\R)$ and the rapid decay of $\mu$, we deduce for all $r\ge0$,
$$ \norm{\int_{\R^d} \frac{\hat{k}\cdot \nabla_{v}\pt_{\b}^{\a}(\sqrt{\mu} g)_{**}}{u - \hat{k} \cdot v_{**}- i0^+} \dd v_{**}}_{L^2_u(\hat{k}\R)} \lesssim \norm{\inn{v}^{-r} \pt^{\a} \inn{\nabla_{v}}^{\norm{\b}+1}g}_{L^2}.
$$
This, together with the Sobolev embedding $H^{1/2+\delta} \subset L^\infty$ on $\hat k\R$, for any $\delta>0$ and the fact that $|\hat{k}\cdot v| \le |v|$, yields
\begin{equation*}
\begin{aligned}
&\Big| \iint_{\R^d\times \R^d}\d (k \cdot (v-v_{*})) \mathcal{E}(k,v,v_*)
\; \dd v \dd v_* \Big|
\\&\lesssim 
\frac{1}{\norm{k}} \norm{\inn{\hat{k}\cdot v}^{-r_{0}} \inn{v}^{-\th}\mathbf{h}_{1}}_{L^2_{v}} \norm{\inn{\hat{k}\cdot v}^{-r_{0}} \inn{v}^{-\th}\inn{\nabla_{v}}^{\frac{1}{2}+\d}\mathbf{h}_{2}}_{L^2_{v}} 
\\
&\quad \times \norm{\inn{v}^{-r} \inn{\nabla_{v}}^{\frac{1}{2} +\d}h}_{L^2} \norm{\inn{v}^{-r} \pt^{\a} \inn{\nabla_{v}}^{\norm{\b}+1}g}_{L^2}.
\end{aligned}
\end{equation*}
Using this, we are now ready to bound \eqref{writeTh12}. Indeed, choosing $r_0 >\frac{1}{2}$, we note that
\begin{equation*}
\int_{\mathbb{S}^{d-1}} \inn{\hat{k} \cdot v}^{-2r_{0}} \dd \sigma(\hat{k}) = \norm{\mathbb{S}^{d-2}} \int_{0}^{\pi} (1+ \norm{v}^2 \cos^2 \th)^{-r_0} \sin \th \dd \th \lesssim \inn{v}^{-1},
\end{equation*}
and therefore, recalling that $V$ is symmetric and $V \in\dot{H}^{\frac{1}{2}}$, we obtain 
\begin{equation*}
\begin{aligned}
&\iint_{\R \times \R^d}\norm{k} \widehat{V}(k)^2 \norm{\inn{\hat{k}\cdot v}^{-r_{0}} \inn{v}^{-\th}\mathbf{h}_{1}}_{L^2_{v}} \norm{\inn{\hat{k}\cdot v}^{-r_{0}} \inn{v}^{-\th}\inn{\nabla_{v}}^{\frac{1}{2}+\d}\mathbf{h}_{2}}_{L^2_{v}} \dd k\\
&\lesssim  \left(\int_{\R^d}\norm{k} \widehat{V}(k)^2 \inn{\hat{k}\cdot v}^{-2r_{0}} \inn{v}^{-2\th}\norm{\mathbf{h}_{1}}^2 \dd v \dd k \right)^{\frac{1}{2}}\\
&\qquad \times \left(\int_{\R^d}\norm{k} \widehat{V}(k)^2 \inn{\hat{k}\cdot v}^{-2r_{0}} \inn{v}^{-2\th}\norm{\inn{\nabla_{v}}^{\frac{1}{2}+\d} \mathbf{h}_{2}}^2 \dd v \dd k\right)^{\frac{1}{2}}\\
&\lesssim \norm{\inn{v}^{-\frac{1}{2}}\inn{v}^{-\th}\mathbf{h}_{1}}_{L^2_{v}} \norm{\inn{v}^{-\frac{1}{2}}\inn{v}^{-\th} \inn{\nabla_{v}}^{\frac{1}{2}+\d}\mathbf{h}_{2}}_{L^2_{v}} .
\end{aligned}
\end{equation*}
This proves that 
$$
\begin{aligned}
\Big|\int_{\R^3}\inn{v}^{-2\th}\mathbf{h}_{1} \cdot T^{\Omega_{1},\Omega_{2}}_{\G}(g,h)\mathbf{h}_{2} \dd v
\Big| 
&\lesssim \norm{\inn{v}^{-\frac{1}{2}}\inn{v}^{-\th}\mathbf{h}_{1}}_{L^2_{v}} \norm{\inn{v}^{-\frac{1}{2}}\inn{v}^{-\th} \inn{\nabla_{v}}^{\frac{1}{2}+\d}\mathbf{h}_{2}}_{L^2_{v}} 
\\&\qquad  \times \norm{\inn{v}^{-r} \inn{\nabla_{v}}^{\frac{1}{2} +\d}h}_{L^2} \norm{\inn{v}^{-r} \pt^{\a} \inn{\nabla_{v}}^{\norm{\b}+1}g}_{L^2}
\end{aligned}$$
for any $r\ge 0$, $\delta>0$, and $\theta \in \R$, which proves \eqref{T - upper 3} if $\mathbf{h}_{j} = P_v^\perp \mathbf{h}_{j}$ (upon taking $\delta=1/2$). In general, we use the decomposition \eqref{dechj} and the relation \eqref{equation:v v* relation} to gain an extra factor of $\langle v\rangle^{-1}$ for $P_v$ components as done in the previous case. This completes the proof of the lemma. 
\end{proof}

To simplify the notation, we will denote $\e(k,k\cdot v ; \nabla_{v} F_{g})$ and $\e(k,k\cdot v ; \nabla_{v} \mu)$ by $\e_{g}$ and $\e_{\mu}$, respectively. Similar to $\mathcal{B}_{\nabla_{v} F_g}[h]$, we can get the estimate of $\mathcal{B}_{\nabla_{v} F_g}[h] - \mathcal{B}_{\nabla_{v} \mu}[h]$.
\begin{equation} \label{B-B leibniz rule}
\pt_{\b}^{\a} (\mathcal{B}_{\nabla_{v} F_g}[h] - \mathcal{B}_{\nabla_{v} \mu}[h]) = \sum_{\substack{\a_{1} + \a_{2} = \a\\ \b_{1} + \b_{2} = \b}} \binom{\a}{\a_{1}} \binom{\b}{\b_{1}} H^{\a_1,\b_1,\a_{2},\b_{2}} (g,h)
\end{equation}
where 
\begin{equation}\label{defGBOG}
\begin{aligned}
H^{\a_1,\b_1,\a_{2},\b_{2}} (g,h) :&= \iint_{\R^d \times \R^d} (k \otimes k) |\widehat{V}(k)|^2 \d (k \cdot (v-v_{*}))  \\
&\qquad\times \pt_{\b_{1}}^{\a_{1}} \left( \frac{(\e_{g}-\e_{\mu})\overline{\e_{g}} + (\overline{\e_{g}}-\overline{\e_{\mu}}){\e_{\mu}}}{\norm{\e_{g}}^2 \norm{\e_{\mu}}^2}\right)  \pt_{\a_{2}}^{\b_{2}} (\sqrt{\mu_{*}}h_{*}) \dd v_{*} \dd k.
\end{aligned}
\end{equation}
Here, we compute 
\begin{align*}
&\pt_{\b_{1}}^{\a_{1}} \left( \frac{(\e_{g}-\e_{\mu})\overline{\e_{g}} + (\overline{\e_{g}}-\overline{\e_{\mu}}){\e_{\mu}}}{\norm{\e_{g}}^2 \norm{\e_{\mu}}^2}\right)\\ 
\\&= \sum_{\substack{ \eta_{1,1} + \eta_{1,2} = \a_{1}\\ \ \eta_{2,1} +  \eta_{2,2} = \b_{1}}}{C_{\eta}}
\Bigg(\Big( {\pt^{\eta_{1,1}}_{\eta_{2,1}} (\e_{g}-\e_{\mu})}{{\pt^{\eta_{1,2}}_{\eta_{2,2}} \frac{\overline{\e_{g}}}{\norm{\e_{g}}^2 \norm{\e_{\mu}}^2}}}\Big)
+ \Big( {\pt^{\eta_{1,1}}_{\eta_{2,1}} (\overline{\e_{g}}-\overline{\e_{\mu}})}{{\pt^{\eta_{1,2}}_{\eta_{2,2}} \frac{{\e_{\mu}}}{\norm{\e_{g}}^2 \norm{\e_{\mu}}^2}}}\Big) \Bigg)
\end{align*}
Similar to \eqref{expGOmega} and \eqref{TAGpi - definition}, we can define
\begin{equation}\label{expHOmega}
H^{\Omega_{1},\Omega_{2}} (g,h) = \sum_{\Gamma} C_{\Omega_{1},\Omega_{2},\Gamma} S^{\Omega_{1},\Omega_{2}}_{\G}(g,h),
\end{equation}
where 
\begin{equation} \label{SAGpi - definition}
\begin{aligned}
S^{\Omega_{1},\Omega_{2}}_{\G}(g,h) :&= \iint_{\R^d \times \R^d} (k \otimes k) |\widehat{V}(k)|^2 \d (k \cdot (v-v_{*})) (\pt^{\eta_{1,1}}_{\eta_{2,1}} (\e_{g}-\e_{\mu})+ \pt^{\eta_{1,1}}_{\eta_{2,1}} (\overline{\e_{g}}-\overline{\e_{\mu}}))\\
&\qquad\times  \frac{C_{\eta}}{{\e_{g}}^{s+1} \bar{\e_{g}}^{l+1} {\e_{\mu}}^{p+1}\bar{\e_{\mu}}^{q+1}}\\
&\qquad\times 
\prod ({\pt^{\eta_{1,1,i}}_{\eta_{2,1,i}} \e_{g}})(\overline{{\pt^{\eta_{1,2,j}}_{\eta_{2,2,j}} \e_{g}}}) ({\pt^{\eta_{1,3,k}}_{\eta_{2,3,k}} \e_{\mu}})(\overline{{\pt^{\eta_{1,4,t}}_{\eta_{2,4,t}} \e_{\mu}}})
\pt_{\a_{2}}^{\b_{2}} (\sqrt{\mu_{*}}h_{*}) \dd v_{*} \dd k.
\end{aligned}
\end{equation}
Similar to Lemma \ref{lemma:T - upper bound}, we have the following lemma. Since the proof follows the same argument as that of Lemma \ref{lemma:T - upper bound}, we thus avoid repeating the details.

\begin{lemma} \label{lemma:S - upper bound}
(Upper bound of $S^{\Omega_{1},\Omega_{2}}_{\G}$) 
Assume that the same conditions as in Lemma \ref{lemma:T - upper bound} hold for $V$ and $g$. Then, we have for all vector fields $\mathbf{h}_{1}$, $\mathbf{h}_{2}$, for all multi-index $\Omega_{1}$, $\Omega_{2}$, $\G$ and real number $r \ge 0$, and $\th \in \R$
\begin{equation}\label{S - upper 1}
\begin{aligned}
&\norm{\int_{\R^d} \inn{v}^{-2\th} \mathbf{h}_{1} \cdot S^{\Omega_{1},\Omega_{2}}_{\G}(g,h) \mathbf{h}_{2} \dd v} \\
&\lesssim 
\norm{\mathbf{h}_{1}}_{0,0,\th,A} \norm{\mathbf{h}_{2}}_{0,0,\th,A} \norm{h}_{\norm{\a_{2}},\norm{\b_{2}}+1,r,L^2} \norm{g}_{\norm{\eta_{1,1}},\norm{\eta_{2,1}}+2,r,L^2} \prod_{(\eta_{1,i,j},\eta_{2,i,j}) \in \G}(1+\norm{g}_{\norm{\eta_{1,i,j}},\norm{\eta_{2,i,j}}+2,r,L^2}).
\end{aligned}
\end{equation}
Alternatively, we can exchange one derivative of $h$ with one derivative of $\mathbf{h}_{2}$, namely
\begin{equation}\label{S - upper 2}
\begin{aligned}
&\norm{\int_{\R^d}\inn{v}^{-2\th} \mathbf{h}_{1} \cdot S^{\Omega_{1},\Omega_{2}}_{\G}(g,h) \mathbf{h}_{2} \dd v} \\
&\lesssim 
\norm{\mathbf{h}_{1}}_{0,0,\th,A} \norm{\mathbf{h}_{2}}_{0,1,\th,A} \norm{h}_{\norm{\a_{2}},\norm{\b_{2}},r,L^2} \norm{g}_{\norm{\eta_{1,1}},\norm{\eta_{2,1}}+2,r,L^2}\prod_{(\eta_{1,i,j},\eta_{2,i,j}) \in \G}(1+\norm{g}_{\norm{\eta_{1,i,j}},\norm{\eta_{2,i,j}}+2,r,L^2}).
\end{aligned}
\end{equation}
In addition, if all the derivatives (with respect to $\a$ and $\b$) act on exactly one $\e$, we can also exchange one derivative of $h$ with one derivative of $g$, namely
\begin{equation}\label{S - upper 3}
\begin{aligned}
\norm{\int_{\R^d}\inn{v}^{-2\th} \mathbf{h}_{1} \cdot S^{\Omega_{1},\Omega_{2}}_{\G}(g,h) \mathbf{h}_{2} \dd v} \lesssim 
\norm{\mathbf{h}_{1}}_{0,0,\th,A} \norm{\mathbf{h}_{2}}_{0,1,\th,A} 
\norm{h}_{0,1,r,L^2} \norm{g}_{\norm{\a},\norm{\b}+1,r,L^2}.
\end{aligned}
\end{equation}
\end{lemma}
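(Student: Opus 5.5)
The plan is to follow the proof of Lemma \ref{lemma:T - upper bound} line by line, with one change. In the Fa\`a di Bruno expansion defining $S^{\Omega_{1},\Omega_{2}}_{\G}$ there is now a distinguished factor $\pt^{\eta_{1,1}}_{\eta_{2,1}}(\e_g-\e_\mu)$. I will bound it by Lemma \ref{lem-disp}(iv) instead of (iii), so that it contributes a genuine factor of $g$ rather than $1+\cdots$. The remaining factors are handled as follows:
\begin{itemize}
\item Denominators $\e_g^{s+1}\bar\e_g^{l+1}\e_\mu^{p+1}\bar\e_\mu^{q+1}$: bounded above uniformly by Lemma \ref{lem-disp}(i)--(ii), using the smallness \eqref{smallnessg1}.
\item Factors $\pt\e_g$: bounded by \eqref{epsilon - space and velocity derivative - estimate}, which gives $(1+\norm{g}_{\norm{\eta_{1,i,j}},\norm{\eta_{2,i,j}}+2,r,L^2})$ after taking $\delta=1/2$, exactly as in \eqref{bdprod2}.
\item Factors $\pt\e_\mu$: bounded by constants, since $\mu$ is smooth and rapidly decaying.
\end{itemize}
Combining these gives a pointwise-in-$(k,v,x)$ bound on the symbol. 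Compared with \eqref{bdprod2}, it carries the extra factor $\norm{g}_{\norm{\eta_{1,1}},\norm{\eta_{2,1}}+2,r,L^2}$.

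With this symbol bound in hand, the $k$-integration goes as before. Integrating $(k\otimes k)|\widehat V(k)|^2\d(k\cdot(v-v_*))$ against a bounded function of $k$ yields $\lesssim |v-v_*|^{-1}$. I then apply \eqref{inth} with the split \eqref{ineq:v-v*split} to get the analogue of \eqref{TAGpi step 1 bound}, that is, a pointwise bound $\inn{v}^{-1}\norm{h}_{\norm{\a_2},\norm{\b_2}+1,r,L^2}$ times the $g$-factors.
\begin{itemize}
\item For \eqref{S - upper 1}: decompose $\mathbf{h}_j=P_v\mathbf{h}_j+P_v^\perp\mathbf{h}_j$ and use \eqref{equation:v v* relation}. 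This trades $v$ for $v_*$ and gains $\inn{v}^{-1}$ on the $P_v$ components, which gives the $A$-norms via \eqref{eq:equiv-L2A}.
\item For \eqref{S - upper 2}: for $d\ge3$, use that $|v-v_*|^{-1}\in L^2_{loc}$, so no derivative is lost on $h$. For $d=2$, use Hardy--Littlewood--Sobolev into $L^4_v$ together with the embedding $H^1_v\subset L^4_v$ on $\inn{v}^{-\th-1/2}\mathbf{h}_2$, which moves the derivative onto $\mathbf{h}_2$.
\end{itemize}

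The main obstacle is \eqref{S - upper 3}. Here all derivatives fall on one $\e$, so one factor must carry $\norm{g}_{\norm{\a},\norm{\b}+1}$, one derivative fewer than Lemma \ref{lem-disp} provides. There are two cases.
\begin{itemize}
\item \emph{The derivatives hit the difference factor $\pt^{\a}_{\b}(\e_g-\e_\mu)$.} This is the Hilbert-transform representation $\widehat V(k)\int \hat k\cdot\nabla_v\pt^\a_\b(\sqrt\mu g)_{**}/(\hat k\cdot(v-v_{**})-i0^+)\,\dd v_{**}$. I will repeat the argument after \eqref{writeTh12} verbatim:
  \begin{itemize}
  \item decompose $v=u\hat k+\eta$ and integrate out the delta;
  \item use $L^2(\hat k\R)$-boundedness of the Hilbert transform on the $g$-factor;
  \item put $H^{1/2+\d}\subset L^\infty$ on $\hat k\R$ onto $\mathbf{h}_2$ and $h$;
  \item average $\inn{\hat k\cdot v}^{-2r_0}$ over the sphere to recover $\inn{v}^{-1}$, using $V\in\dot H^{1/2}$.
  \end{itemize}
  The remaining undifferentiated quotient $\overline{\e_g}/(|\e_g|^2|\e_\mu|^2)$ or $\e_\mu/(|\e_g|^2|\e_\mu|^2)$ is bounded by a constant in $(k,v)$, so it does not affect the argument.
\item \emph{The derivatives hit $\e_g$ inside the quotient while the difference factor is undifferentiated.} Then $\pt^\a_\b\e_g=\pt^\a_\b(\e_g-\e_\mu)+\pt^\a_\b\e_\mu$. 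The first part is handled as in the previous case, with the undifferentiated difference bounded by Lemma \ref{lem-disp}(iv) and absorbed using smallness. The $\e_\mu$ part is harmless. Care is needed so that this absorption still produces the stated right-hand side of \eqref{S - upper 3}, namely no extra product factor and only $\norm{g}_{\norm{\a},\norm{\b}+1,r,L^2}$. If necessary, I would use the smallness of $g$ in the low norm to bound the leftover difference by a constant.
\end{itemize}
Finally, the $P_v$ components are treated via \eqref{equation:v v* relation} as before.
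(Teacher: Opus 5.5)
Your proposal is correct and takes the same approach as the paper, which only says the proof repeats that of Lemma \ref{lemma:T - upper bound}; you supply the one new ingredient, bounding the distinguished factor $\pt^{\eta_{1,1}}_{\eta_{2,1}}(\e_g-\e_\mu)$ by Lemma \ref{lem-disp}(iv) to get a genuine factor of $g$. One point in \eqref{S - upper 3} should be made explicit: in the $\pt_\b\e_\mu$ sub-case (necessarily $\a=0$, $|\b|\ge 1$), the required $g$-factor must come from the undifferentiated difference, via $\norm{\e_g-\e_\mu}\lesssim\norm{g}_{0,2,r,L^2}\le\norm{g}_{0,|\b|+1,r,L^2}$, so that difference must not be absorbed into a constant by smallness there.
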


\subsection{Nonlinear Estimates}

In this section, we establish nonlinear estimates in weighted spaces. Precisely, we obtain the following lemma. 

\begin{lemma} \label{lem-boundN} Let $N(g_1,g_2,g_3)$ be the nonlinear term defined as in \eqref{defNg123}. Assume that $V$ and $g_1, g_2, g_3$ satisfy the same conditions as in Proposition \ref{proposition:N upper bound}. Then, for any $r \ge 0$, $\theta \in \R$, and any multi-index $\a$ and $\b$, there hold
\begin{equation} \label{cor 7 - inequality 1}
\begin{aligned}
\norm{\int_{\R^d} \inn{v}^{-2\th} h\pt^{\a}_{\b}N(g_{1},g_{2},g_{3}) \dd v } 
&\lesssim \norm{h}_{0,0,\th,D} \sum \Big(\norm{g_{3}}_{\norm{\a_{3}},\norm{\g_{3}},\th,D} \norm{g_{2}}_{\norm{\a_{2}},\norm{\g_{2}},r,D} \prod_{j}(1+\norm{g_{1}}_{\norm{\a_{1,j}},\norm{\g_{1,j}},r,D}) \\
&\quad+\norm{g_{3}}_{\norm{\a_{3}},\norm{\g_{3}},\th,D} \norm{g_{1}}_{\norm{\a_{1,1}},\norm{\g_{1,1}},r,D}  \prod_{j}(1+\norm{g_{1}}_{\norm{\a_{1,j}},\norm{\g_{1,j}},r,D}) \\
&\quad+ \norm{g_{2}}_{\norm{\a_{2}},\norm{\g_{2}},r,D} \norm{g_{1}}_{\norm{\a_{1,1}},\norm{\g_{1,1}},r,D} \prod_{j}(1+\norm{g_{1}}_{\norm{\a_{1,j}},\norm{\g_{1,j}},r,D})\Big),
\end{aligned}
\end{equation}
where the summation is taken over all partitions of the multi-indices $\a$ and $\b$ satisfying $\sum_{j}\a_{1,j} + {\a_{2}} + {\a_{3}} = {\a}$ and $ \sum_{j}\b_{1,j} + {\b_{2}} + {\b_{3}} = {\b},
$
and the sequence $\{\gamma_{i}\}$ is defined as follows: 

\begin{itemize}

\item For any $i \in \{(1,1),\cdots (1,n),2\}$, set $\gamma_i = |\beta_i|+1$ if $(\a_i,\b_i)\not = (\a,\b)$, otherwise set $\gamma_i = |\b|$. 

\item For $i=3$, set $\gamma_3 = \beta_3$ if $(\a_3,\b_3)\not =0$, otherwise set $\gamma_3=1$. 
 
\end{itemize}

\end{lemma}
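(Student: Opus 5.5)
The plan is to integrate by parts once in $v$ to move the outer divergence $(\nabla_v - v)\cdot$ onto the test function. Then I apply Leibniz's rule to the three products in \eqref{defNg123} and estimate each resulting bilinear form with Lemma \ref{lemma:T - upper bound} or Lemma \ref{lemma:S - upper bound}. Integrating by parts gives
\begin{equation*}
\int_{\R^d}\inn{v}^{-2\th} h\,\pt^{\a}_{\b}N(g_1,g_2,g_3)\,\dd v = -\int_{\R^d}\Big((\nabla_v+v)h - 2\th v\inn{v}^{-2}h\Big)\cdot \inn{v}^{-2\th}\pt^{\a}_{\b}\mathbf{J}\,\dd v,
\end{equation*}
where $\mathbf{J}$ is the vector field inside the divergence in \eqref{defNg123}, and where $\pt^\a_\b$ commutes with $(\nabla_v - v)$ up to the lower-order term $\pt_{\b-e_j}$, which is treated identically. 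The vector field $\mathbf{h}_1:=(\nabla_v+v)h$, together with the harmless correction $v\inn{v}^{-2}h$, satisfies $\norm{\mathbf{h}_1}_{0,0,\th,A}\lesssim \norm{h}_{0,0,\th,D}$ by \eqref{weighted D norm definition} and \eqref{equivD}.

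Next I distribute $\pt^\a_\b$. For the first line of \eqref{defNg123}, I write $\pt^\a_\b(\mathcal{B}_{\nabla_v F_{g_1}}[g_2]\nabla_v g_3)=\sum \pt^{\a'}_{\b'}\mathcal{B}_{\nabla_v F_{g_1}}[g_2]\,\nabla_v\pt^{\a_3}_{\b_3}g_3$. I expand $\pt^{\a'}_{\b'}\mathcal{B}$ via \eqref{B leibniz rule}, \eqref{expGOmega} into terms $T^{\Omega_1,\Omega_2}_\G(g_1,g_2)$, with $(\a_2,\b_2)=\Omega_2$ and the Fa\`a di Bruno indices producing the $(\a_{1,j},\b_{1,j})$. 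Setting $\mathbf{h}_2=\nabla_v\pt^{\a_3}_{\b_3}g_3$, the bound \eqref{T - upper 1} gives
\[
\norm{h}_{0,0,\th,D}\,\norm{g_3}_{|\a_3|,|\b_3|,\th,D}\,\norm{g_2}_{|\a_2|,|\b_2|+1,r,L^2}\prod_j\big(1+\norm{g_1}_{|\a_{1,j}|,|\b_{1,j}|+2,r,L^2}\big).
\]
Here $\norm{\mathbf{h}_2}_{0,0,\th,A}\le\norm{g_3}_{|\a_3|,|\b_3|,\th,D}$. The $g_1,g_2$ factors carry $\sqrt\mu$-decay, so an $L^2$ norm with arbitrary polynomial weight and $k$ derivatives is bounded by the $D$-norm with $k-1$ derivatives and any weight $r$, via \eqref{equivD}, \eqref{ineq:weightedD} and $\norm{\inn{v}^{-r}\nabla_v f}\lesssim\norm{\inn{v}^{-r+3/2}f}_D$. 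This converts $\norm{g_2}_{|\a_2|,|\b_2|+1,r,L^2}$ into $\norm{g_2}_{|\a_2|,|\b_2|,r',D}$. Likewise $\norm{g_1}_{\cdot,|\b_{1,j}|+2,r,L^2}$ becomes $\norm{g_1}_{\cdot,|\b_{1,j}|+1,r',D}$, which is precisely $\g_{1,j}=|\b_{1,j}|+1$.

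The term $\mathcal{B}[\nabla_v g_2]g_3$ is handled the same way. Here $\mathbf{h}_2$ is $\pt^{\a_3}_{\b_3}g_3$ itself, controlled by the $D$-norm through its $vf$ component after using \eqref{equation:v v* relation} to trade $v$ for $v_*$. In both cases, when $(\a_3,\b_3)=0$ the factor $g_3$ appears with one derivative, giving $\g_3=1$, consistent with the statement. The second and third lines of \eqref{defNg123} are differences and are expanded via \eqref{B-B leibniz rule}, \eqref{expHOmega} into $S^{\Omega_1,\Omega_2}_\G$. Lemma \ref{lemma:S - upper bound} supplies an extra factor $\norm{g_1}_{|\a_{1,1}|,\cdot,r,L^2}$ in place of a $(1+\cdot)$ factor, which gives the second and third sums in \eqref{cor 7 - inequality 1}. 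In the third line the vector $\sqrt\mu\,\mathcal{B}[\cdot]$ absorbs all weights. I place $(\nabla_v+v)g_2$ as the argument $h$ and consider two cases: when $g_3$ is absent, the pairing is between $g_2$ and $g_1$; when $\mathbf{h}_2$ is built from the constant $\sqrt\mu$ coefficient, the second line produces $g_3$ against $g_1$.

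The main obstacle is the top-order cases, where the derivative count must not exceed $|\b|$. Applying \eqref{T - upper 1} blindly would require $|\b|+1$ derivatives on $g_2$ or $|\b|+2$ on $g_1$. I handle these cases as follows.
\begin{itemize}
\item When $(\a_2,\b_2)=(\a,\b)$, I instead use \eqref{T - upper 2}. This moves the extra derivative onto $\mathbf{h}_2$, which then involves only undifferentiated $g_3$ and yields $\g_3=1$.
\item When all derivatives fall on a single $\e$, so that $(\a_{1,1},\b_{1,1})=(\a,\b)$, I use \eqref{T - upper 3} or \eqref{S - upper 3}. These cost only one extra derivative, giving $\norm{g_1}_{|\a|,|\b|+1,r,L^2}\lesssim\norm{g_1}_{|\a|,|\b|,r',D}$, so $\g_{1,1}=|\b|$. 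The lower-order factors $g_2$ and $g_3$ then take one derivative each, matching $\g_2=1$ and $\g_3=1$.
\end{itemize}
Checking that every Leibniz and Fa\`a di Bruno term falls into one of these regimes, including the terms where $\pt^\a_\b$ lands on $\sqrt\mu_*$ or on the commutator with $v$, is the bookkeeping I expect to be most delicate.
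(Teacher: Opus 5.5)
Your proposal is correct and follows essentially the same route as the paper. You integrate by parts to move $(\nabla_v - v)$ onto $\inn{v}^{-2\th}h$, then expand via Leibniz and Fa\`a di Bruno. You use \eqref{T - upper 1} in the generic case, \eqref{T - upper 2} when $(\a_2,\b_2)=(\a,\b)$, and \eqref{T - upper 3} or \eqref{S - upper 3} when all derivatives fall on a single $\e$. The difference terms are handled by Lemma \ref{lemma:S - upper bound}, and $L^2$ norms are converted to $D$ norms via \eqref{vdnorm}.

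The only cosmetic difference is the case where $(\a_1,\b_1)=(\a,\b)$ but the derivatives are spread over several $\e$ factors. The paper uses \eqref{T - upper 2} there, whereas your implicit use of \eqref{T - upper 1} works equally well.
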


\begin{remark} 
Note that the estimates stated in \eqref{cor 7 - inequality 1} are pointwise in $x$, and therefore no loss of spatial derivatives is present. In addition, the loss of one $v$-derivative is allowed, except for top derivatives (i.e. when $(\alpha_i,\beta_i) = (\alpha,\beta)$) and for $i=3$ (i.e. when derivatives hit $g_3$).  

\end{remark}

\begin{proof}[Proof of Lemma \ref{lem-boundN}] In view of \eqref{defNg123}, we write 
$$N = \sum_{j=1}^4 N_{j},$$
where 
$$
\begin{aligned} 
N_1 &= (\nabla_{v} - v) \cdot (\mathcal{B}(\nabla_{v} F_{g_{1}})[g_{2}] \nabla_{v} g_{3})
,\\
N_2 &= - (\nabla_{v} -v) \cdot ( \mathcal{B}(\nabla_{v} F_{g_{1}}) [\nabla_{v} g_{2}]g_{3}),
\\
N_3&=(\nabla_{v} -v) \cdot \Big ( \mathcal{B}_{\nabla_{v} F_{g_{1}}}[\sqrt{\mu}] - \mathcal{B}_{\nabla_{v} \mu} [\sqrt{\mu}] \Big) (\nabla_{v} + v)g_{3},
\\N_4 &= 
- (\nabla_{v} -v) \cdot \Bigg(\sqrt{\mu} \Big( \mathcal{B}_{\nabla_{v} F_{g_{1}}} [(\nabla_{v} +v) g_{2}] - \mathcal{B}_{\nabla_{v} \mu} [(\nabla_{v} +v) g_{2}] \Big)\Bigg).
\end{aligned}$$
Let us first treat the nonlinear term $N_1$. By definition, we compute 
\begin{align}
\int_{\R^d}
 \inn{v}^{-2\th} h\pt^{\a}_{\b}N_1 \dd v  
=& \int_{\R^d}\inn{v}^{-2\th} h \pt^{\a}_{\b} \left((\nabla_{v} - v) \cdot \mathcal{B}(\nabla_{v} F_{g_{1}})[g_{2}] \nabla_{v} g_{3} \right)\dd v  \nonumber \\
=& -\int_{\R^d}\inn{v}^{-2\th} (\nabla_{v} + v)h \cdot \pt^{\a}_{\b}\left(\mathcal{B}(\nabla_{v} F_{g_{1}})[g_{2}] \nabla_{v} g_{3}\right) \dd v  \label{cor 7 pf eq 0}\\
&- \int_{\R^d}\sum_{e_{i} \le \b} \inn{v}^{-2\th} h e_{i}  \pt^{\a}_{\b-e_{i}}\left(\mathcal{B}(\nabla_{v} F_{g_{1}})[g_{2}] \nabla_{v} g_{3}\right)_{i} \dd v  \label{cor 7 pf eq 1}\\
&- \int_{\R^d}(\nabla_{v} \inn{v}^{-2\th}) h \cdot \pt^{\a}_{\b}\left(\mathcal{B}(\nabla_{v} F_{g_{1}})[g_{2}] \nabla_{v} g_{3}\right) \dd v .\label{cor 7 pf eq 2}
\end{align}
Recalling \eqref{B leibniz rule}, we compute
\begin{equation} \label{corollary 8 -eq1}
\begin{aligned}
&\int_{\R^d} \inn{v}^{-2\th} (\nabla_{v} +v) h \cdot \pt^{\a}_{\b} \big(\mathcal{B}(\nabla_{v} F_{g_{1}})[g_{2}] \nabla_{v} g_{3} \big) \; \dd v  \\
&= \sum_{\substack{\a_{1} +\a_{2} +\a_{3} = \a\\ \b_{1} +\b_{2} +\b_{3} = \b} }  C_{\a_{i},\b_{i}}\int_{\R^d}\inn{v}^{-2\th} (\nabla_{v} + v)h \cdot G^{\a_{1},\b_{1},\a_2, \b_2}(g_{1},g_{2}) \nabla_{v}\pt^{\a_{3}}_{\b_{3}}g_{3} \; \dd v,
\end{aligned}
\end{equation}
for $G^{\Omega_{1},\Omega_{2}}$ being defined as in \eqref{defGOG}. Consider the case when $\alpha_1 = \alpha$ and $\beta_1 = \beta$ (and hence $\alpha_2=\alpha_3=0$ and $\beta_2=\beta_3=0$).  This case can be decomposed as in \eqref{expGOmega}. If all derivatives are applied to either $\epsilon$ or $\bar{\epsilon}$ (i.e., when $\eta_{1,1,1} = \alpha, \eta_{2,1,1} = \beta$ or $\eta_{1,2,1} = \alpha, \eta_{2,2,1} = \beta$), we apply \eqref{T - upper 3}, yielding the desired estimates. Here, recalling  \eqref{equivD}, we note that 
\begin{equation}\label{vdnorm}\norm{g_1}_{\norm{\a},\norm{\b}+1,r + \frac32,L^2} \lesssim  \norm{g_1}_{\norm{\a},\norm{\b},r,D} \end{equation}
which has no loss of $v$-derivatives. If otherwise (i.e. not all derivatives hit $\epsilon$ or $\bar{\epsilon}$), we use \eqref{T - upper 2} to obtain the following estimate, for any $r \ge 0$. 
\begin{equation} \label{corollary 8 -eq2}
\begin{aligned}
&\Big| \int_{\R^d}\inn{v}^{-2\th} (\nabla_{v} + v)h \cdot G^{\a,\b,0,0} (g_{1},g_{2}) \nabla_{v}g_{3} \; \dd v\Big|
\\
&\lesssim \norm{(\nabla_{v} +v)h}_{0,0,\th,A} \norm{\nabla_{v} g_{3}}_{0,1,\th,A} 
\norm{g_{2}}_{0,1,r+\frac32,L^2}\\
& \quad \times \Big(\norm{g_{1}}_{\norm{\a},\norm{\b}+1,r+\frac32,L^2} + \prod_{\substack{\sum M_{j} = \norm{\a} \\
\sum N_{j} = \norm{\b} \\N_{j}+M_{j} < \norm{\a} + \norm{\b}}}(1+\norm{g_1}_{N_{j},M_{j}+2,r+\frac32,L^2})\Big),
\end{aligned}
\end{equation}
noting the constraint $N_{j}+M_{j} < \norm{\a} + \norm{\b}$ holds due to the fact not all derivatives hit on $\epsilon$ or $\bar{\epsilon}$. Using again \eqref{vdnorm}, we obtain the lemma for this case. 

Next, we consider the case when $\alpha_2 = \alpha$ and $\beta_2 = \beta$, using \eqref{T - upper 2}, for any $r \ge 0$, we bound 
\begin{equation} \label{corollary 8 -eq3}
\begin{aligned}
&\int_{\R^d}\inn{v}^{-2\th} (\nabla_{v} + v)h \cdot G^{0,0,\a,\b} (g_{1},g_{2}) \nabla_{v}g_{3} \; \dd v
\\&\lesssim \norm{(\nabla_{v} +v)h}_{0,0,\th,A} \norm{\nabla_{v} g_{3}}_{0,1,\th,A}\norm{g_{2}}_{\norm{\a},\norm{\b},r+\frac32,L^2} (1+\norm{g_1}_{0,2,r+\frac32,L^2}),
\end{aligned}
\end{equation}
which again yields the lemma in this case. Finally, in all the remaining cases, using \eqref{T - upper 1}, for any $r \ge 0$, we bound 
\begin{equation} \label{corollary 8 -eq4}
\begin{aligned}
&\int_{\R^d}\inn{v}^{-2\th} (\nabla_{v} + v)h \cdot G^{\a_{1},\b_{1},\a_{2},\b_{2}} (g_{1},g_{2}) \nabla_{v}\pt^{\a_{3}}_{\b_{3}}g_{3} \; \dd v
\\&\lesssim \norm{(\nabla_{v} +v)h}_{0,0,\th,A} \norm{\nabla_{v} g_{3}}_{\norm{\a_{3}},\norm{\b_{3}},\th,A} \norm{g_{2}}_{\norm{\a_{2}},\norm{\b_{2}}+1,r+\frac32,L^2}\prod_{\substack{\sum M_{j} = \norm{\a_{1}} \\
\sum N_{j} = \norm{\b_{1}}}}(1+\norm{g_{1}}_{N_{j},M_{j}+2,r+\frac32,L^2}).
\end{aligned}
\end{equation}

We note that the above estimates have no impact on the $x$-derivatives, and therefore the derivative indexes $\alpha_i$ remain unchanged as in the partitions of $\alpha$ into $\{\alpha_i\}$. Next, introduce $\gamma_i$ as stated in the lemma. We note that employing the estimates \eqref{corollary 8 -eq2}, \eqref{corollary 8 -eq3}, and \eqref{corollary 8 -eq4} are to ensure that the total number of derivatives does not exceed $|\a| + |\b|$. Moreover, since each estimate increases the number of $v$-derivatives by at most one, this leads to the definition $\gamma_i = |\beta_i|+1$, except for the top derivatives. In addition, examining all of our estimates reveals that \eqref{corollary 8 -eq2}, \eqref{corollary 8 -eq3}, and \eqref{corollary 8 -eq4} do not increase the derivative counts of $g_1$, $g_2$, and $g_3$, respectively. Collecting \eqref{corollary 8 -eq2}, \eqref{corollary 8 -eq3} and \eqref{corollary 8 -eq4}, together with \eqref{vdnorm}, 
we obtain 
\begin{equation} \label{corollary 8 -eq5}
\begin{aligned}
&\norm{\int_{\R^d} \inn{v}^{-2\th} (\nabla_{v} + v)h \cdot \pt^{\a}_{\b}\left(\mathcal{B}(\nabla_{v} F_{g_{1}})[g_{2}] \nabla_{v} g_{3}\right) \dd v} \\
&\lesssim \norm{h}_{0,0,\th,D} \sum \norm{g_{3}}_{\norm{\a_{3}},\norm{\g_{3}},\th,D} \norm{g_{2}}_{\norm{\a_{2}},\norm{\g_{2}},r,D} \prod_{j}(1+\norm{g_{1}}_{\norm{\a_{1,j}},\norm{\g_{1,j}},r,D}),
\end{aligned}
\end{equation}
for any $\alpha,\beta$, as desired, upon recalling $\norm{f}^2_{D} = \norm{v f }^2_{A} + \norm{\nabla_{v} f}^2_{A} 
$. This yields the lemma for the nonlinear term $N_1$. The term $N_{2}$ can be estimated in the same manner as $N_{1}$. For $N_{3}$ and $N_{4}$, we can bound them by substituting $\mu$ into $g_{2}$ and $g_{3}$ respectively, applying Lemma \ref{lemma:S - upper bound} instead of Lemma \ref{lemma:T - upper bound}, and then following a similar procedure done above for $N_{1}$. Combining all these bounds completes the proof of the lemma.
\end{proof}

\subsection{Proof of Proposition \ref{proposition:N upper bound}}

We are now ready to prove Proposition \ref{proposition:N upper bound}, providing the nonlinear estimates in the weighted spaces. Indeed, using \eqref{cor 7 - inequality 1}, we have  
\begin{equation} \label{corollary 8 -eq6}
\begin{aligned}
&\norm{\iint_{\T^d \times \R^d} \inn{v}^{-2\th} h\pt^{\a}_{\b}N(g_{1},g_{2},g_{3}) \;\dd x\dd v } 
\\
&\lesssim \int_{\T^d} \norm{h}_{0,0,\th,D} \sum \Big(\norm{g_{3}}_{\norm{\a_{3}},\norm{\g_{3}},\th,D} \norm{g_{2}}_{\norm{\a_{2}},\norm{\g_{2}},r,D} \prod_{j}(1+\norm{g_{1}}_{\norm{\a_{1,j}},\norm{\g_{1,j}},r,D}) \\
&\quad+\norm{g_{3}}_{\norm{\a_{3}},\norm{\g_{3}},\th,D} \norm{g_{1}}_{\norm{\a_{1,1}},\norm{\g_{1,1}},r,D}  \prod_{j}(1+\norm{g_{1}}_{\norm{\a_{1,j}},\norm{\g_{1,j}},r,D}) \\
&\quad+ \norm{g_{2}}_{\norm{\a_{2}},\norm{\g_{2}},r,D} \norm{g_{1}}_{\norm{\a_{1,1}},\norm{\g_{1,1}},r,D} \prod_{j}(1+\norm{g_{1}}_{\norm{\a_{1,j}},\norm{\g_{1,j}},r,D})\Big) \; \dd x.
\end{aligned}
\end{equation}
It remains to bound the integral in $x$. We shall apply Holder's inequality to deduce $L^2$ bounds on the top derivatives, and $L^\infty$ bounds on the terms with few spatial derivatives, which can then be bounded by $H^{d/2+\delta}$ norms due to the Sobolev embeddings, leading to the introduction of $\sigma_i = |\alpha_i|+\lfloor \frac{d}{2}+1\rfloor$. Specifically, we first focus on the case when $\norm{\a} + \norm{\b} \ge \frac{d}{2} + 5$. If $|\a_{1,j}| + |\b_{1,j}| < \frac{d}{2} +5$ for all $j$ and $|\a_{2}| + |\b_{2}| < \frac{d}{2} +5$ then, we apply the $L^2$ norm to the $h$, $g_3$ and the $L^\infty$ norm to the remaining terms then we have $\sigma_{3} = \a_{3}$ and $(|\g_{3}| = |\b_{3}|)$ or ($ |\g_{3}|=1$ and $ |\a_{3}|=0$). For $\sigma_{1,j}$ and $\sigma_{2}$, we have
\begin{align*}
|\sigma_{i}| + |\g_{i}| \le |\a_{i}|+\lfloor\frac{d}{2}+1 \rfloor + |\b_{i}|+1 \le d+6.
\end{align*}
On the other hand, if $|\a_{1,j}| + |\b_{1,j}| \ge \frac{d}{2} +5$ for some $j$, or $|\a_{2}| + |\b_{2}| \ge \frac{d}{2} +5$ then, we apply the $L^2$ norm to the $h$ $g_{1,j}$ or $g_{2}$ and the $L^\infty$ norm to the remaining terms. 
Finally, in the case when $\norm{\a} + \norm{\b} < \frac{d}{2} + 5$, we apply \eqref{T - upper 1} and \eqref{S - upper 1} regardless of how the derivatives are distributed. Then, applying the Sobolev inequality to $g_1$ and $g_2$ as in the previous step, we conclude the proof of the proposition.

\section{Energy Estimates}\label{sec-L2}

In this section, we shall derive weighted $L^2$ energy estimates and estimates on $\P f$. According to Lemma \ref{lemma:A,D - norm}, see \eqref{ineq:Pfchart}, we obtain $\norm{\inn{v}^{-r}\pt_{\b}^{\a}\P f}_{L^{p}_{v}} \approx \norm{\pt^{\a}a[f]} + \norm{\pt^{\a}b[f]}+ \norm{\pt^{\a}c[f]}$, which implies that to derive an estimate for $\P f$, it is sufficient to control only $a, b$, and $c$.

\subsection{Projection Estimates}

Let us first examine the properties of $\mathbf{P} f$ and then introduce the Burnett functions, both of which are essential for the proof. Indeed, let the Burnett functions be defined by
\begin{equation}\label{0-Burnette-function}
\begin{aligned}
&A_{ij}(v) := \Big(v_{i}v_{j} - \frac{\delta_{ij}}{d}\norm{v}^2 \Big)\sqrt{\mu},
\qquad
B_{i}(v) := v_{i}\frac{2\norm{v}^2-(d+2)}{\sqrt{d+2}} \sqrt{\mu},\quad  i,j=1,\cdots d.
\end{aligned}
\end{equation}
For each $i,j=1,\cdots, d$,  $A_{ij}(v)$ and $B_{i}(v)$ are orthogonal to every basis element $\chi_{k} $ of $\ker L$ defined as in \eqref{base-hat-chi}: namely, 
\begin{equation}\label{Burnette-orthogonal}
\int_{\R^d} \chi_{k}(v) A_{ij}(v) \dd v=0, \quad
\int_{\R^d} \chi_{k}(v) B_{i}(v) \dd v=0, \quad k=0,\cdots, d+1.
\end{equation}
The following lemma describes the macroscopic part $\P f$ and shows that it is governed by the $v$-independent functions $a, b,$ and $c$.

\begin{proposition} \label{proposition:0-macro-L2-estimate} Let $f$ be a solution of 
\begin{equation} \label{LB equation f g form}
    \pt_t f + v \cdot \nabla_{x} f = L[f] + g
\end{equation}
that satisfies the mass, momentum and energy conservations 
\begin{equation} \label{0-f-a-conservation-law}
\begin{aligned}
\iint_{\T^d\times\R^d} \begin{pmatrix}
    1 \\
    v_{i} \\ 
    \norm{v}^2
\end{pmatrix}\sqrt{\mu}
f(t,x,v) \dd v \dd x =& 0 \;\;  \text{ for all }  i\in\{1,\cdots d\}, \; \text{and}\; t\in [0, T],
\end{aligned}
\end{equation}
with $0<T\leq \infty$. In addition, assume that there is a norm $\normmb{\cdot}$ so that $g$ in \eqref{LB equation f g form} satisfies
\begin{equation} \label{ineq:prop12 assumption}
\iint_{\T^d \times \R^d} \psi \pt^{\a} g \dd v \dd x \lesssim \normm{ \psi}_{L^2_{x,v}} \normmb{\pt^{\a}g},
\end{equation}
for any test function $\psi$, and 
\begin{equation*}
\begin{aligned}
\iint_{\T^d\times\R^d}\begin{pmatrix}
    1 \\
    v_{i} \\ 
    \norm{v}^2
\end{pmatrix}\sqrt{\mu}
g \dd v \dd x =& 0 \;\;  \text{ for all }  i\in\{1,\cdots d\}.
\end{aligned}
\end{equation*}
Then,  for all $0\leq s\leq t \leq T$, the following estimates hold:
\begin{align} \label{0-P-f-macro-L2}
\int_{s}^{t} \normm{ \P \pt^{\a} f}_{L^{2}_{x,v}}^2\dd \tau \lesssim \; 
\big[ G_{0} (t)-  G_{0} (s)\big]
+  \int_{s}^{t}   \Big( \normm{\ip \pt^{\a} f}_{D}^{2} + \normmb{\pt^{\a}g}\Big) \dd \tau, 
\end{align}
where $|G_{0}(t)| \lesssim \normm{\pt^{\a} f(t)}_{L^{2}_{x,v}}^2$.
\end{proposition}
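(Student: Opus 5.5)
Apply $\pt^{\a}$ to \eqref{LB equation f g form}; since $L$ and the transport commute with $\pt^\a_x$, the function $\pt^\a f$ solves the same equation with source $\pt^\a g$. It therefore suffices to treat $\a=0$, writing $f$ for $\pt^\a f$ and $g$ for $\pt^\a g$. The conservation laws \eqref{0-f-a-conservation-law} and the zero moments of $g$ are preserved, so $a,b,c$ have zero spatial mean on $\T^d$. I will follow the test function method of \cite{esposito2013non, Esposito2017, jung2025global}: test the equation against elliptic test functions built from $\phi_a,\phi_b,\phi_c$, the solutions of Poisson problems $-\Delta_x \phi = a$ (resp. $b_j$, $c$) on $\T^d$. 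These problems are solvable because of the zero means, and elliptic regularity gives $\normm{\phi}_{H^2_x}\lesssim \normm{a}_{L^2_x}$, etc.

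The weak formulation is as follows. For a test function $\psi(x,v)$, integrating against $\psi$ gives
\begin{equation*}
\frac{d}{dt}\iint \psi f \,\dd x\dd v - \iint f\, v\cdot\nabla_x\psi \,\dd x\dd v = \iint \psi L[f]\,\dd x\dd v + \iint \psi g\,\dd x\dd v .
\end{equation*}
Since $\psi$ will depend on time through $\phi$, an extra term $\iint f \pt_t\psi$ appears. I write $f=\P f+\ip f$. The $L$ term only sees $\ip f$, and by Corollary \ref{corollary:Upper bound of L} it is bounded by $\normm{\ip f}_D \normm{\ip \psi}_D$. The $g$ term is bounded by \eqref{ineq:prop12 assumption}, giving $\normm{\psi}_{L^2}\normmb{g}$.

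I choose the test functions in a standard way, with constants tuned so that the $\P f$ transport term produces exactly the target quantity.
\begin{itemize}
\item[(c)] Take $\psi_c=(|v|^2-\beta_c)\sqrt\mu\, v\cdot\nabla_x\phi_c$, with $\beta_c$ chosen so that $a$ and $b$ drop out. Using \eqref{Burnette-orthogonal}, the transport term then gives $\normm{c}^2_{L^2}$ plus terms in $\ip f$.
\item[(b)] Take $\psi_b=\sum_{i,j}\pt_j\phi_{b_i} A_{ij}$-type combinations, e.g.\ $(v_iv_j-\delta_{ij}\beta_b)\sqrt\mu\,\pt_j\phi_{b_i}$ plus a correction. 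This produces $\normm{b}^2$ plus a $c$ contribution that is controlled by the previous step.
\item[(a)] Take $\psi_a=(|v|^2-\beta_a)\sqrt\mu\, v\cdot\nabla_x\phi_a$, with $\beta_a$ chosen to kill the $c$ contribution. This yields $\normm{a}^2$.
\end{itemize}
Each $\psi$ is $\sqrt\mu$ times a polynomial in $v$, so its $D$ and $L^2$ norms are bounded by $\normm{\nabla_x\phi}_{L^2}\lesssim\normm{\P f}_{L^2}$, which can be absorbed.

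The main obstacle is the time-derivative terms $\iint f\pt_t\psi$, which require $\pt_t\phi$, i.e.\ $\pt_t a,\pt_t b,\pt_t c$. To estimate $\normm{\nabla_x\pt_t\phi}_{L^2}$, I will use the local conservation laws obtained by testing the equation with $\chi_k$ (pure functions of $x$). By the moment assumptions these give, e.g., $\pt_t a+\nabla\cdot b=0$, $\pt_t b+\nabla(a+c)+\nabla\cdot\langle v\otimes v\sqrt\mu,\ip f\rangle=\langle v\sqrt\mu,g\rangle$, and similarly for $c$. Consequently $\normm{\nabla_x\pt_t\phi}_{L^2}\lesssim \normm{b}+\normm{\ip f}_{D}+\normmb{g}$, and the cross terms $a\cdot b$ etc.\ are handled by the ordering and by Young's inequality with small parameters. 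The $c$-dependence inside $\pt_t b$ is the delicate one, so I would weight the three estimates as $\epsilon^2(a) + \epsilon(b)+ (c)$.

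Finally, I integrate in time over $[s,t]$. The boundary terms $G_0=\sum\iint\psi f$ satisfy $|G_0|\lesssim\normm{f}^2_{L^2}$. Summing and absorbing the small multiples of $\normm{\P f}^2$ gives \eqref{0-P-f-macro-L2}, with the $g$ contribution appearing as $\normmb{g}$ (or its square after Young's inequality).
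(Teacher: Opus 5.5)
Your plan follows the paper's argument. The test functions are the same: your $\psi_a$ with $\beta_a=d+2$ is exactly the paper's $\psi_a$, and your $\psi_c$ with $\beta_c=\frac{d+2}{2}$ is a multiple of $\sum_i\pt_i\phi_c B_i$. You control $\pt_t\phi$ through the local conservation laws, which is what the paper's auxiliary tests against $\pt_t\varphi$ times a collision invariant amount to. You also use the same hierarchy, with the $c$-estimate on top.

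The one step that does not go through as argued is the final weighting $\epsilon^2(a)+\epsilon(b)+(c)$. Write $X_p=\int_s^t\normm{p}^2_{L^2_x}\dd\tau$ for $p=a,b,c$.
\begin{itemize}
\item Suppose $\psi_b$ has a component along $\chi_{d+1}$ (your ``correction'', and the paper's choice). Then $\iint f\,\pt_t\psi_b$ contains $\int c\,\nabla_x\cdot\pt_t\phi_b$.
\item The momentum law gives $\nabla_x\cdot\pt_t\phi_b=\frac{a}{\sqrt2}+\frac{c}{\sqrt d}+(\ip f,\,g\text{ terms})$. So the normalized $b$-estimate reads $X_b\le K\int_s^t\!\int_{\T^d}a\,c\,\dd x\,\dd\tau+\dots$ with an $O(1)$ constant $K$.
\item After weighting, this cross term has size $\epsilon K\sqrt{X_aX_c}$. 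The left side only provides $\epsilon^2X_a+X_c\ge 2\epsilon\sqrt{X_aX_c}$. The weight of $(b)$ is exactly the geometric mean of the other two, so shrinking $\epsilon$ gains nothing.
\item Young's inequality with a small parameter does not help either. The bound $\epsilon K\sqrt{X_aX_c}\le\epsilon\eta X_a+\frac{\epsilon K^2}{4\eta}X_c$ needs $\eta<\epsilon$, which forces a coefficient larger than $K^2/4$ on $X_c$.
\end{itemize}
Absorption therefore requires $K<2$, and your sketch does not check this. For your explicit example $\beta_b=\frac12$ without correction one finds $K=\sqrt{2/d}$, so it happens to close. For the paper's $\psi_b$, however, $K=(d-2)/\sqrt{2d}$, which exceeds $2$ once $d\ge 12$.

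The robust fix is to make the weight of $(a)$ linear rather than quadratic in $\epsilon$. If $(a)$ has the form $X_a\le C_aX_b+\dots$, apply Young in $(b)$ with $\eta=\frac{1}{8C_a}$. Then combine $\frac{\epsilon}{4C_a}(a)+\epsilon(b)+(c)$, with $\epsilon$ small depending on $C_a$ and the constants in $(b)$. This is precisely the paper's combination $(8C_aC_b)^{-1}(a)+(2C_b)^{-1}(b)+(c)$.

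Alternatively, drop the correction and take $\psi_b=\sum_{i,j}\pt_j\phi_{b_i}A_{ij}$. Then $\pt_t\psi_b\perp\ker L$, so $\iint f\,\pt_t\psi_b$ only sees $\ip f$. The transport term still yields $\frac{\sqrt2}{4}\normm{b}^2_{L^2_x}+\sqrt2\big(\frac14-\frac{1}{2d}\big)\normm{\nabla_x(\nabla_x\cdot\phi_b)}^2_{L^2_x}\ge\frac{\sqrt2}{4}\normm{b}^2_{L^2_x}$ for $d\ge 2$. With no macroscopic cross term left in $(b)$, a weighting of your type closes.
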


\begin{proof}
The proof employs the test function method in \cite{esposito2013non, Esposito2017} combined with the elliptic theory. Indeed, taking spacial derivatives to \eqref{LB equation f g form} we have
\begin{equation} \label{LB equation f derivative form}
    \pt_t \pt^{\a} f + v \cdot \nabla_{x} \pt^{\a} f = \pt^{\a} L[f] + \pt^{\a}g
\end{equation}
Multiplying \eqref{LB equation f derivative form} by a test function $\psi_{p}$ yields the weak formulation
\begin{equation}\label{0-test-equation-uniform-form}
\begin{aligned}
\underbrace{\iint_{\T^{d} \times \R^{d}}  \psi_{p} \pt_t \pt^{\a} f \dd v \dd x}_{:=\Xi_{p}^{1}}
- \underbrace{\iint_{\T^{d} \times \R^{d}}  (v \cdot \nabla_x \psi_{p}) \pt^{\a} f \dd v \dd x}_{:=\Xi_{p}^{2}}
%----------------
=  \underbrace{\iint_{\T^{d} \times \R^{d}} \Big[\psi_{p} \pt^{\a} L[f] + \psi_{p} \pt^{\a} g\Big] \dd v \dd x}_{:=\Xi_{p}^{3}}.
\end{aligned}
\end{equation}
Here the index $p\in \{a,b,c\}$ marks estimates of $a, b$ and $c$. To estimate $\P f$, by the representation \eqref{Pf-abc-hat}, it suffices to estimate $a, b$ and $c$ by Lemma \ref{lemma:A,D - norm}, see \eqref{ineq:Pfchart}.

\subsection*{Step 1)  Estimate for $\int_{s}^{t}\|b\|_{L^{2}_{x}}\dd \tau$.}

In \eqref{0-test-equation-uniform-form} we choose the test function
\begin{equation}\label{0-psi-b-2-definition}
\begin{aligned}
 \psi_{b} := & \sum_{i,j=1}^{d} \pt_{j} \varphi_{b,i} A_{ij}(v) - \sum_{i=1}^{d} \pt_{i}\varphi_{b,i}\chi_{d+1}(v) \frac{d-2}{2\sqrt{2d}}
 \\
=&\sum_{i,j=1}^{d} \pt_{j} \varphi_{b,i} v_iv_j\sqrt{\mu}
 -\sum_{i=1}^{d} \pt_{i} \varphi_{b,i}\frac{2|v|^2-(d-2)}{4}\sqrt{\mu},
\end{aligned}
\end{equation}
where the vector-valued function $\varphi_{b}$ satisfies the elliptic equation
\begin{equation}\label{0-b-2-elliptic-equation}
\begin{aligned}
- \Delta_x \varphi_{b} = \pt^{\a} b \;\;  &\text{in } \T^d, \quad \int_{\T^d} \varphi_{b} \dd x = 0.
\end{aligned}
\end{equation}
The standard elliptic theory (e.g., \cite{gilbarg1977elliptic}) guarantees a unique solution of \eqref{0-b-2-elliptic-equation} with the estimate
\begin{equation}\label{0-b-2-elliptic-estimate}
\begin{aligned}
\normm{\nabla^2_x \varphi_{b} }_{L^{2}_{x}} + \normm{\nabla_x \varphi_{b} }_{L^{2}_{x}} + \normm{\varphi_{b} }_{L^{2}_{x}} & \lesssim \normm{\pt^{\a} b}_{L^{2}_{x}}.
\end{aligned}
\end{equation}
We now estimate each term $ \Xi_{b}^{k}$ in \eqref{0-test-equation-uniform-form}, with $\psi_p = \psi_b$.
For $ \Xi_{b}^{1}$, integration by parts yields
\begin{equation*}
\begin{aligned}
\int_{s}^{t}  \Xi_{b}^{1}\dd \tau = &\; \big[ G_{b} (t) - G_{b} (s)\big] - \int_{s}^{t}\iint_{\T^d \times \R^{d}} \pt_t\psi_{b} \pt^{\a} f \dd v \dd x \dd \tau,
\end{aligned}
\end{equation*}
where $G_{b}$ is $\iint_{\T^d \times \R^{d}} \psi_{b} \pt^{\a} f \dd v \dd x$. Using the elliptic estimates \eqref{0-b-2-elliptic-estimate}, we bound 
\begin{equation}\label{bdGb}
\normm{G_{b}}_{L^2_{x,v}} \lesssim \normm{\psi_{b}}_{L^2_{x,v}} \normm{\pt^{\a} f}_{L^2_{x,v}} \lesssim \normm{\nabla \varphi_{b}}_{L^2_{x}} \normm{\pt^{\a} f}_{L^2_{x,v}} \lesssim \normm{\pt^{\a} f}_{L^2_{x,v}}^2.
\end{equation}
The contributions from $a$ and $b$ vanish due to \eqref{Burnette-orthogonal} and the identity $\int_{\R^{d}}\chi_{d+1}\pt^{\a} f\dd v= \pt^{\a} c$.
Using \eqref{0-b-2-elliptic-estimate} and Corollary \ref{corollary:Upper bound of L}, we obtain 
\begin{equation} \label{0-Theta1 - b estimate}
\begin{aligned}
 \Big| \int_{s}^{t}  \Xi_{b}^{1} \Big|
 \leq  &    \big[ G_{b} (t) - G_{b}(s) \big] +  \int_{s}^{t} \normm{\pt_t \nabla_x \varphi_{b}}_{L^2_x} \big( \normm{\pt^{\a} c}_{L^2_x} + \normm{\ip \pt^{\a} f}_{D} \big),
\end{aligned}
\end{equation}
where $G_b$ satisfies \eqref{bdGb} as claimed. 

To handle $\Xi_{b}^{2}$, we use the expression in the second line of \eqref{0-psi-b-2-definition} and split
\begin{equation}\label{0-Theta3 - b estimate-psi-b-2}
\begin{aligned}
- v\cdot \nabla_x  \psi_{b}
=&- \sum_{i,j,k=1}^{d}  \pt_{j} \pt_{k} \varphi_{b,i}
\P \left (v_i v_j v_k \sqrt{\mu} \right)
 +  \sum_{i,k=1}^{d} \pt_{i} \pt_k  \varphi_{b,i}  v_k \frac{2|v|^2-(d-2)}{4} \sqrt{\mu}\\
%---------------
%---------------
& - \sum_{i,j,k=1}^{d}  \pt_{j} \pt_{k} \varphi_{b,i}
\ip \left (v_i v_j v_k \sqrt{\mu} \right)\\
:=& {K}_1+{K}_2+{K}_3.
\end{aligned}
\end{equation}
A direct calculation yields
\begin{equation}\label{b-estimate-K1-final-form}
\begin{aligned}
{K}_1
= & -\sum_{l=1}^d v_l\sqrt{\mu} \Big( \sum_{i=j=k=l} + \sum_{i =  j \neq k=l} + \sum_{i = k \neq j=l} + \sum_{i = l \neq j=k}  \Big)
\pt_{j} \pt_{k}{\varphi}_{b,i} \int_{\mathbb{R}^d}v_i v_j v_k v_l \mu \dd v\\
= & -\frac{1}{4}\sum_{l=1}^d v_l\sqrt{\mu} \Big(3\pt_{l} \pt_{l}{\varphi}_{b,l} + 2\sum_{i\neq l} \pt_{i} \pt_{l}{\varphi}_{b,i}
+ \sum_{i\neq l} \pt_{i} \pt_{i}{\varphi}_{b,l}\Big) \\
= & -\frac{1}{4}\sum_{l=1}^d v_l\sqrt{\mu} \Big( 2\sum_{i =1}^{d} \pt_{i} \pt_{l}{\varphi}_{b,i}
+ \Delta {\varphi}_{b,l}\Big),
\end{aligned}
\end{equation}
where  we used the identity
\begin{equation*}
\begin{aligned}
&\int_{\mathbb{R}^d}v_i^2 v_j^2\mu \dd v=
 \left\{
   \begin{array}{ll}
     \frac{3}{4}, & \hbox{if $i=j$,} \\
     \frac{1}{4}, & \hbox{if $i\neq j$.}
   \end{array}
 \right.
\end{aligned}
\end{equation*}
Moreover, using $\int_{\mathbb{R}^d}v_i^2(2|v|^2-(d-2)) \mu \dd v=2$, we obtain
\begin{equation}\label{0-pf-b-left2}
\begin{aligned}
 \iint_{\T^d \times\R^d} {K}_2  \mathbf{P}f
= & \int_{\T^d} \frac{1}{2}\sum_{l =1}^{d}\sum_{i =1}^{d}\pt^{\a} b_{l} \pt_{i} \pt_{l}{\varphi}_{b,i} ,
\end{aligned}
\end{equation}
Combining \eqref{0-Theta3 - b estimate-psi-b-2}, \eqref{b-estimate-K1-final-form} and \eqref{0-pf-b-left2}, we find
\begin{equation}\label{0-Theta3 - b estimate}
\begin{aligned}
{\Xi}_{b}^{2}
=&\iint_{\T^{d}\times\mathbb{R}^d} ({K}_1 + {K}_2)  \mathbf{P}\pt^{\a} f \dd v \dd x+ E_{b}\\
=&-\frac{1}{4}\sum_{l=1}^d\int_{\T^d}\pt^{\a} {b}_l\Delta{\varphi}_{b,l} \dd x+ E_{b}  =\frac{1}{4}\|{\pt^{\a} b}\|_{L^2_x}^2+ E_{b},
\end{aligned}
\end{equation}
 where we used \eqref{0-b-2-elliptic-equation}. By \eqref{0-b-2-elliptic-estimate} and Corollary \ref{corollary:Upper bound of L}, the remainder satisfies
\begin{equation}\label{0-E-b-2-estimate-L2}
\begin{aligned}
 |E_{b}|=&
 \Big | \iint_{\T^d\times\mathbb{R}^d} {K}_3 \ip \pt^{\a} f \dd v \dd x\Big |
 \lesssim   \normm{\ip \pt^{\a} f}_{D} \normm{\pt^{\a} {b}}_{L^{2}_{x}},
 \end{aligned}
\end{equation}
for any $r\ge 0$. Finally, for $\Xi_{b}^{3}$, Corollary \ref{corollary:Upper bound of L} and \eqref{ineq:prop12 assumption} give
\begin{equation} \label{0-Theta4-b2-estimate}
\begin{aligned}
\norm{\Xi_{b}^{3}}
\lesssim   \normm{\pt^{\a} b}_{L^{2}_{x}} \Big(\normm{\ip \pt^{\a} f}_{D} + \normmb{\pt^{\a}g} \Big).
\end{aligned}
\end{equation}

It remains to estimate $ \normm{\pt_t \nabla_x
{\varphi}_{b}}_{L^2_x} $ appearing in \eqref{0-Theta1 - b estimate}. For this,  we choose
${\psi}_{b} = \pt_t{\varphi}_{b} \cdot v \sqrt{ \mu }$ in \eqref{0-test-equation-uniform-form}  and denote the resulting three terms by $\hat{\Xi}_{b}^{k}$ ($k=1,2,3$). Clearly, $\hat{\Xi}_{b}^{3}=0$. Using \eqref{0-b-2-elliptic-equation}, we obtain
\begin{equation} \label{0-b-t-theta1-estimate}
\begin{aligned}
\hat{\Xi}_{b}^{1}
=  \int_{\T^d} \pt_t {\varphi}_{b}\cdot  \pt_t \pt^{\a}{b}  \dd x
=  - \int_{\T^d} \pt_t {\varphi}_{b}\cdot  \Delta_x \pt_t {\varphi}_{b}  \dd x
=  \normm{\nabla_x \pt_t {\varphi}_{b}}_{L^2_{x}}^2.
\end{aligned}
\end{equation}
A Poincar\'{e}'s inequality and Corollary \ref{corollary:Upper bound of L} yield the following inequality.
\begin{equation} \label{0-b-t-theta3-estimate}
\begin{aligned}
\norm{ \hat{\Xi}_{b}^{2}}
 \lesssim & \normm{\nabla_x \pt_t{\varphi}_{b}}_{L^2_{x}} \big(\normm{\pt^{\a}{a}}_{L^2_x} + \normm{\pt^{\a}{c}}_{L^2_x} + \normm{\ip \pt^{\a} f}_{D}  \big).
\end{aligned}
\end{equation}
Collecting \eqref{0-test-equation-uniform-form} and \eqref{0-b-t-theta1-estimate}--\eqref{0-b-t-theta3-estimate} yields
\begin{equation}\label{0-varphib - pt t estimate}
\begin{aligned}
\normm{\nabla_x \pt_t {\varphi}_{b} }_{L^2_{x}} \lesssim& \normm{\pt^{\a}{a}}_{L^2_{x}} + \normm{\pt^{\a} {c}}_{L^2_{x}}   + \normm{\ip \pt^{\a} f}_{D}.
\end{aligned}
\end{equation}
Finally,  inserting \eqref{0-varphib - pt t estimate} into \eqref{0-Theta1 - b estimate} and  combining it with \eqref{0-test-equation-uniform-form} and \eqref{0-Theta3 - b estimate}--\eqref{0-Theta4-b2-estimate}, we obtain
\begin{equation} \label{0-tildeb - l2 estimate final}
\begin{aligned}
\int_{s}^{t} \normm{\pt^{\a} {b}}_{L^2_{x}}^2 
&\leq C_{b} \Big \{ 
 \big[ G_{b} (t)-   G_{b} (s)\big]
 +  \delta_b \int_{s}^{t} \normm{\pt^{\a} {a}}_{L^2_x}^2\\
&\quad
 +  \int_{s}^{t}   \big ( \normm{\pt^{\a} {c}}_{L^2_x}^2+ \normm{\ip \pt^{\a} f}_{D}^{2} +  \normmb{\pt^{\a}g}^2  \Big \},
\end{aligned}
\end{equation}
where the small constant $\delta_b>0$ arises from H\"{o}lder's inequality.

\subsection*{Step 2)  Estimates for $\int_{s}^{t}\|a\|_{L^{2}_{x}}\dd \tau$.}

In \eqref{0-test-equation-uniform-form}, we consider the test function
\begin{equation} \label{0-psi-a-2-definition}
  \psi_{a} := \sum_{i=1}^{d} \pt_{i} \varphi_{a} \big[\frac{\sqrt{d+2}}{2} B_{i}(v) - \frac{d+2}{2\sqrt{2}}\chi_{i}(v)\big] =  \sum_{i=1}^{d} \pt_{i} \varphi_{a}  v_i(|v|^2 - (d+2))\sqrt{\mu},
\end{equation}
  where $\varphi_{a}$ satisfy the elliptic equations
\begin{align}
- \Delta_x \varphi_{a} = \pt^{\a}a \;\text{ in }\T^d,\qquad \int_{\T^d} \varphi_{a} \dd x =0, \label{0-a-2-elliptic-equation}
\end{align}
According to the standard elliptic theory (cf. \cite{gilbarg1977elliptic}), $\varphi_{a}$ satisfies
\begin{align}
\normm{\nabla^2 \varphi_{a}}_{L^2_x} + \normm{\nabla \varphi_{a}}_{L^2_x} + \normm{\varphi_{a}}_{L^2_x} &\lesssim \normm{a}_{L^2_x}, \label{0-a-2-elliptic-estimate}
\end{align}
We now estimate each term $ \Xi_{a}^{k}$ in \eqref{0-test-equation-uniform-form}.
For $ \Xi_{a}^{1}$, integration by parts similar to \eqref{0-Theta1 - b estimate} yields
\begin{equation} \label{0-Theta1 - a estimate}
\begin{aligned}
\int_{s}^{t}   \norm{ \Xi_{a}^{1}  }
 \leq  &   \left[G_{a} (t)-  G_{a} (s)\right] +  \int_{s}^{t}\normm{\pt_t \nabla_x \varphi_{a,2}}_{L^2_x}  \big( \normm{\pt^{\a} b}_{L^2_x} + \normm{\ip \pt^{\a} f}_{D}\big),
\end{aligned}
\end{equation}
where the contributions from $a$ and $c$ again vanish due to the orthogonality relation.

To handle $\Xi_{a}^{2}$, we use the last expression in \eqref{0-psi-a-2-definition} and split
\begin{equation}\label{0-Theta3 - a estimate-psi-a-2}
\begin{aligned}
- v\cdot \nabla_x  \psi_{a}
=&- \sum_{i,j=1}^{d}  \pt_{i}\pt_{j} \varphi_{a}
\P \left (v_i v_j \norm{v}^2 \sqrt{\mu} \right)
 +  (d+2)\sum_{i,j=1}^{d} \pt_{i} \pt_j  \varphi_{a} \P \left( v_{i} v_{j} \sqrt{\mu}\right)\\
%---------------
%---------------
& - \sum_{i,j=1}^{d}  \pt_{i} \pt_{j} \varphi_{a}
\ip \left (v_i v_j (\norm{v}^2 -  (d+2)) \right).
\end{aligned}
\end{equation}
A direct calculation yields
\begin{equation}\label{ac-estimate-K1-final-form}
\begin{aligned}
\inn{v_{i}v_{j}\norm{v}^2 \sqrt{\mu},\chi_{0}}=\frac{d+2}{4}\d_{ij}, \quad & \inn{v_{i}v_{j}\norm{v}^2 \sqrt{\mu},\chi_{k}}=0, \quad \inn{v_{i}v_{j}\norm{v}^2 \sqrt{\mu},\chi_{d+1}}=\frac{d+2}{\sqrt{2d}}\d_{ij}, \\
\inn{v_{i}v_{j} \sqrt{\mu},\chi_{0}}= \frac{1}{2}\d_{ij}, \quad & \inn{v_{i}v_{j} \sqrt{\mu},\chi_{k}}=0, \quad \inn{v_{i}v_{j} \sqrt{\mu},\chi_{d+1}}=\frac{1}{\sqrt{2d}}\d_{ij},
\end{aligned}
\end{equation}
for $k=1, \cdots d$.
Combining \eqref{ac-estimate-K1-final-form} and \eqref{0-Theta3 - a estimate-psi-a-2}, we have
\begin{align}
&\Xi_{a}^{2}
=  \int_{\T^d} \frac{3(d+2)}{4}\Delta_x \varphi_{a,2} \pt^{\a} a \dd x   + E_{a}
= -\frac{3(d+2)}{4} \normm{ \pt^{\a} a }^2_{L^{2}_{x}}  + E_{a}, \label{0-Theta3 - a estimate}
\end{align}
where $E_{a}$ arises from the $\ip \pt^{\a} f$ part and can be controlled similarly to what was done in \eqref{0-E-b-2-estimate-L2}.
The terms $\Xi_{a}^{3}$ are estimated analogously to \eqref{0-Theta4-b2-estimate}.

To estimate $ \normm{\pt_t \nabla_x \varphi_{a}}_{L^2_x}$ appearing in \eqref{0-Theta1 - a estimate},  we choose $\psi_{a} = \pt_t\varphi_{a} \sqrt{\mu}$ in \eqref{0-test-equation-uniform-form}. Arguing similarly to the derivation of \eqref{0-varphib - pt t estimate} and Poincar\'{e}'s inequality, we obtain
\begin{align}
& \normm{\nabla_x \pt_t \varphi_{a} }_{L^2_x} \lesssim  \normm{\pt^{\a} b}_{L^2_x}.  \label{0-varphia - pt t estimate}
\end{align}
Inserting \eqref{0-varphia - pt t estimate} into \eqref{0-Theta1 - a estimate} and  combining it with \eqref{0-test-equation-uniform-form} and the estimates for $\Xi_{a,2}^{3}$, we obtain 
\begin{equation} \label{0-tildea - l2 estimate final}
\begin{aligned}
\int_{s}^{t} \normm{a}_{L^2_{x}}^2  \leq
 C_{a} \Big \{
 G_{a} (t)-  G_{a} (s) +  \int_{s}^{t}   \big ( \normm{\pt^{\a} {b}}_{L^2_x}^2+ \normm{\ip \pt^{\a} f}_{D}^{2} +  \normmb{\pt^{\a}g}^2 \Big \}.
\end{aligned}
\end{equation}

\subsection*{Step 3)  Estimate for $\int_{s}^{t}\|{c}\|_{L^{2}_{x}}\dd \tau$.}

Finally, we derive the $L^2$ estimates on $c$. Similarly as done above, in \eqref{0-test-equation-uniform-form}, we choose
\begin{equation}\label{0-psi-b-n-definition}
\begin{aligned}
 {\psi}_{c}(t,x,v) := &  \sum_{i=1}^{d} \pt_{i} {\varphi}_{c}(t,x) {B}_{i}(v),
\end{aligned}
\end{equation}
where ${\varphi}_{c}(x)$ satisfy the elliptic equations
\begin{align}
&- \Delta_x {\varphi}_{c} = \pt^{\a}{c} \;\;
\text{in } \T^d, \quad 
\int_{\T^d} {\varphi}_{c} \dd x =0. \label{0-c-2-elliptic-equation}
\end{align}
These solutions satisfy elliptic estimates analogous to \eqref{0-a-2-elliptic-estimate}. As a result, for $ \Xi_{c}^{1}$, we obtain, similar to \eqref{0-Theta1 - a estimate},
\begin{equation} \label{0-Theta1 - c estimate}
\begin{aligned}
\Big | \int_{s}^{t}    \Xi_{c}^{1}  \dd \tau \Big|
\lesssim  &\;  \big[G_{c} (t)-  G_{c} (s)\big] + \int_{s}^{t}\normm{\pt_t \nabla_x \varphi_{c}}_{L^2_x}  \normm{\ip \pt^{\a} f}_{D},
\end{aligned}
\end{equation}
where the contribution from ${\P}f$ vanishes due to \eqref{Burnette-orthogonal}. Similar to $\Xi_{a}^{2}$, by \eqref{ac-estimate-K1-final-form} we have
\begin{equation}
\begin{aligned}
\Xi_{c}^{2}
= & -\frac{\sqrt{d+2}}{\sqrt{2d}}\int_{\T^d} \pt^{\a}c \Delta_x \varphi_{c}  + E_{c}
= \frac{\sqrt{d+2}}{\sqrt{2d}} \normm{ \pt^{\a}c }^2_{L^{2}_{x}}  + E_{c},
\end{aligned}
\end{equation}
where the remainder $E_{c}$ is due to $\ip f$, while the terms $\Xi_{c}^{3}$ is estimated similarly to \eqref{0-Theta4-b2-estimate}. Finally, to bound $ \normm{\pt_t \nabla_x \varphi_{c}}_{L^2_x} $ in \eqref{0-Theta1 - c estimate}, we choose $\psi_{c} = \pt_t\varphi_{c}  \chi_4(v)$ in \eqref{0-test-equation-uniform-form}. Inserting $\psi_{c}$ into \eqref{0-Theta1 - c estimate} and  combining it with \eqref{0-test-equation-uniform-form} and the estimates for $\Xi_{c}^{2}, \Xi_{c}^{3}$, we obtain
\begin{equation} \label{0-tildec - l2 estimate final}
\begin{aligned}
\int_{s}^{t} \normm{c}_{L^2_{x}}^2  \leq
 C_{c} \Big \{
 G_{c} (t)-  G_{c} (s) +  \int_{s}^{t}   \big ( \d_{c}\normm{\pt^{\a} {b}}_{L^2_x}^2+ \normm{\ip \pt^{\a} f}_{D}^{2} +  \normmb{\pt^{\a}g}^2  \Big \},
\end{aligned}
\end{equation}
where the small constant $\delta_c>0$   arises from Young's inequality.

\subsection*{Step 4)  Combination of the estimates for $a$,  $b$ and  $c$.}

Choose $\delta_{b} = (8 C_{a} C_{b})^{-1}$ and $\delta_{c} = (8 C_{b} C_{c})^{-1}$.
A direct computation of
$$
(8C_{a}C_{b})^{-1} \times \eqref{0-tildea - l2 estimate final} + (2 C_{b})^{-1}  \times\eqref{0-tildeb - l2 estimate final} +\eqref{0-tildec - l2 estimate final}
$$
yields \eqref{0-P-f-macro-L2}, upon recalling \eqref{ineq:Pfchart}. This ends the proof of the proposition. 
\end{proof}

\subsection{$L^2$ Estimates}

To obtain an energy estimate for the nonlinear equation, we again consider the following linear equation:
\begin{equation} \label{linear eq f - lemma10}
\begin{cases}
&\pt_t f + v \cdot \nabla_{x} f -L[f] = g\\    
&f|_{t=0} = f_{0}.
\end{cases}
\end{equation}

\begin{lemma} \label{weighted energy estimate} (Weighted energy estimate)
Let $f$ be a solution of \eqref{linear eq f - lemma10}. Then, the following energy estimates hold for $\th \in \R$ and any multi-index $\a$ and $\b$.
\begin{enumerate}

    \item Unweighted energy estimate for $\pt^\a f$

    \begin{equation*}
\begin{aligned}
\pt_{t} \normm{\pt^\a f}^2_{L^2_{x,v}} + \normm{\pt^{\a} \ip f}_{D}^2
\lesssim \int_{\T^d \times \R^d} \pt^{\a} f \pt^{\a} g \dd x \dd v.
\end{aligned}
\end{equation*}

    \item Weighted energy estimate for $\pt^\a f$

\begin{equation*}
\begin{aligned}
\pt_{t} \normm{\inn{v}^{-\th}\pt^\a f}^2_{L^2_{x,v}} + \normm{\inn{v}^{-\th} \pt^{\a} f}_{D}^2
\lesssim \normm{\pt^{\a} f}_{D}^2 + \int_{\T^d \times \R^d} \inn{v}^{-2 \th}\pt^{\a} f \pt^{\a} g \dd x \dd v.
\end{aligned}
\end{equation*}

    \item Weighted energy estimate for $\pt^\a_{\b}f$

\begin{equation*}
\begin{aligned}
\pt_{t} \normm{\inn{v}^{-\th}\pt^\a_{\b} f}^2_{L^2_{x,v}} + \normm{\inn{v}^{-\th} \pt^\a_{\b} f}_{D}^2 
\lesssim&  \normm{\inn{v}^{-\th+1} \pt^{\a+1}_{\b-1} f}_{D}^2+ \sum_{\norm{\b'} < \norm{\b}}\normm{\inn{v}^{-\th} \pt^{\a}_{\b'} f}_{D}^2
\\&+ \int_{\T^d \times \R^d} \inn{v}^{-2 \th}\pt^\a_{\b} f \pt^\a_{\b} g \dd x \dd v.
\end{aligned}
\end{equation*}

\end{enumerate}
\end{lemma}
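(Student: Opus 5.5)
The plan is a standard $L^2$ energy method applied to the differentiated equation, using the coercivity results of Lemma \ref{lemma:Coerceivity of L}. In each case we apply the relevant derivative to \eqref{linear eq f - lemma10}, multiply by the correspondingly weighted derivative of $f$, and integrate over $\T^d\times\R^d$. Three facts drive the argument. First, the transport term vanishes for pure $x$-derivatives, because $\pt^\a$ commutes with $v\cdot\nabla_x$ and $\iint \langle v\rangle^{-2\th} h\, v\cdot\nabla_x h=0$ by periodicity. Second, $\pt^\a$ commutes with $L$, since $L$ acts only in $v$. Third, the $g$-terms are simply left on the right-hand side as stated.

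For (1), this gives $\frac12\pt_t\normm{\pt^\a f}^2_{L^2_{x,v}}-\iint \pt^\a f\, L[\pt^\a f]=\iint\pt^\a f\pt^\a g$. Applying \eqref{coerL} pointwise in $x$ and integrating, and noting that $\pt^\a$ commutes with $\P$, yields the claim.

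For (2), the same computation with weight $\inn{v}^{-2\th}$ is combined with \eqref{weightLg2} applied to $\pt^\a f$ for each $x$. This gives $\eta\normm{\inn{v}^{-\th}\pt^\a f}_D^2-C_\eta\normm{\pt^\a f}_D^2$ on the left. The negative term is moved to the right, and we divide by $\eta$.

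For (3), $\b>0$, a commutator appears: $\pt_\b(v\cdot\nabla_x f)=v\cdot\nabla_x\pt_\b f+\sum_{e_j\le\b}\binom{\b}{e_j}\pt^{e_j}_{x}\pt_{\b-e_j}f$. The leading part again integrates to zero. For the linear part we use \eqref{weightLg1}, which bounds $-\iint\inn{v}^{-2\th}\pt^\a_\b f\,\pt_\b L[\pt^\a f]$ from below by $\eta\normm{\inn{v}^{-\th}\pt^\a_\b f}_D^2-C\sum_{|\b'|<|\b|}\normm{\inn{v}^{-\th}\pt^\a_{\b'}f}_D^2$. The remaining task is to bound the transport commutator $\iint\inn{v}^{-2\th}\pt^\a_\b f\,\pt^{\a+e_j}_{\b-e_j}f$.

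Handling this commutator is the main obstacle: the dissipation norm $D$ only controls $L^2$ with the weight $\inn{v}^{-1/2}$ (see \eqref{equivD}), so the product must be split across two $D$-norms with matched weights. Writing the integrand as $(\inn{v}^{-\th-1/2}\pt^\a_\b f)(\inn{v}^{-\th+1/2}\pt^{\a+e_j}_{\b-e_j}f)$, Cauchy--Schwarz together with \eqref{equivD} gives the bound $\normm{\inn{v}^{-\th}\pt^\a_\b f}_D\,\normm{\inn{v}^{-\th+1}\pt^{\a+e_j}_{\b-e_j}f}_D$. This uses $\norm{\inn{v}^{-1/2}\inn{v}^{-\th+1}h}_{L^2_v}\lesssim\norm{\inn{v}^{-\th+1}h}_D$, which holds with room to spare; this is exactly why the weight gains one power, as indicated in Figure \ref{fig:graph_b}. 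Young's inequality then absorbs $\tfrac{\eta}{2}\normm{\inn{v}^{-\th}\pt^\a_\b f}_D^2$ into the left-hand side, and dividing by $\eta$ completes the proof. If necessary, \eqref{ineq:weightedD} lets us pass freely between $\inn{v}^{-\th}$ inside and outside the $D$-norm.
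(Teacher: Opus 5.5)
Your proposal is correct and follows the paper's proof essentially step for step: you differentiate \eqref{linear eq f - lemma10}, test against $\inn{v}^{-2\th}\pt^\a_\b f$, use that $\pt^\a$ commutes with $L$ and that the leading transport term integrates to zero, and invoke \eqref{coerL}, \eqref{weightLg2} and \eqref{weightLg1} pointwise in $x$. The transport commutator $\pt^{\a+e_j}_{\b-e_j}f$ is handled exactly as in the paper: you split the weight as $\inn{v}^{-\th-1/2}\cdot\inn{v}^{-\th+1/2}$, apply the lower bound $\norm{\inn{v}^{-1/2}h}_{L^2_v}\lesssim\norm{h}_D$ from \eqref{equivD} (a direct consequence of that bound rather than one with room to spare), and finish with Young's inequality.
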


\begin{proof}
Applying $\partial^{\alpha}$ to \eqref{linear eq f - lemma10}, we obtain
\begin{equation}\label{pta f energy estimate}
\pt_{t} \pt^{\a} f + v\cdot \nabla_{x} \pt^{\a} f  - \pt^{\a} L[f] = \pt^{\a} g.
\end{equation}
Multiplying $\inn{v}^{-2\th} \pt^{\a} f$ and integrating the identity over the phase space,  we have
\begin{equation*}
\begin{aligned}
\pt_{t} \normm{\pt^\a f}^2_{L^2_{x,v}} + \int_{\T^d \times \R^d}{\inn{v}^{-2\th} \pt^\a f L\pt^\a f} \dd x\dd v
\lesssim \int_{\T^d \times \R^d} \inn{v}^{-2\th}\pt^{\a} f \pt^{\a} g \dd x \dd v.
\end{aligned}
\end{equation*}
Note that $L$ commutes with $\partial^{\alpha}$. Therefore, by applying Lemma \ref{lemma:Coerceivity of L}, the first two statements of Lemma \ref{weighted energy estimate} follow. As for the last statement, we apply  $\partial^{\alpha}_{\beta}$ to \eqref{linear eq f - lemma10} and obtain
\begin{equation} \label{ptab f energy estimate}
\pt_{t} \pt^{\a}_{\b} f + v\cdot \nabla_{x} \pt^{\a}_{\b} f + \sum_{\substack{e_{i} \le \b\\ \norm{e_{i}}=1
}}\pt^{\a+e_{i}}_{\b-e_{i}} f - \pt^{\a}_{\b} L[f] = \pt^{\a}_{\b} g.
\end{equation}
We multiply \eqref{ptab f energy estimate} by $\inn{v}^{-2 \th} \partial^{\a}_{\b} f$ and integrate over $v$ and $x$. Applying again Lemma \ref{lemma:Coerceivity of L}, we obtain
\begin{equation*}
\begin{aligned}
\pt_{t} \normm{\inn{v}^{-\th}\pt^\a_{\b} f}^2_{L^2_{x,v}} + \normm{\inn{v}^{-\th} \pt^\a_{\b} f}_{D}^2  
&\lesssim   \normm{\inn{v}^{-1/2}\inn{v}^{-\th}\pt^\a_{\b} f}_{L^2_{x,v}} \normm{\inn{v}^{-1/2} \inn{v}^{-\th+1} \pt^{\a+1}_{\b-1} f}_{L^2_{x,v}}\\&\quad + \sum_{\norm{\b'} < \norm{\b}}\normm{\inn{v}^{-\th} \pt^{\a}_{\b'} f}_{D}^2
+ \int_{\T^d \times \R^d} \inn{v}^{-2 \th}\pt^\a_{\b} f \pt^\a_{\b} g \dd x \dd v,
\end{aligned}
\end{equation*}
Noting
$\normm{\inn{v}^{-1/2}f }_{L^2_{x,v}}\lesssim \normm{f}_{D}$, see \eqref{equivD}, we thus obtain the last statement as stated. This completes the proof of the lemma.  
\end{proof}

\section{Proof of the Theorem 1}\label{pf of main thm}

In this section, we prove our main results stated in Theorem \ref{theo-main}. We establish the global a priori estimate \eqref{0-uniform-bound} for small initial perturbations $  \normmm{{f}_{0}} \leq  \d_0$.
For this, assume that a solution $f$ to \eqref{LBeqs} exists on $[0, T]$ with $0 < T \leq \infty$. Recall that the bootstrap norm $\normmm{\cdot}$ in \eqref{normmm -definition} is defined by
\begin{equation}\label{defBignorm}
\begin{aligned}
 \normmm{f}(t) :=&\sup_{s\in [0,t]}\mathscr{E}_{d+7}^{\frac{1}{2}}[f](s)  + \mathscr{D}_{d+7}^{\frac{1}{2}}[f](t),
\end{aligned}
\end{equation}
where the iterative energy and dissipation functionals $\mathscr{E}_{d+7}^{\frac{1}{2}}[f](t)$ and $\mathscr{D}_{d+7}^{\frac{1}{2}}[f](t)$ are given as in \eqref{energy definition}. To proceed, we first note that 
\begin{equation} \label{d and e relation in section4}
e_{N}[\inn{v}^{-1/2}f](t) \le d_{N}[f](t)  \le e_{N}[\inn{v}^{-1/2}f](t) + e_{N+1}[\inn{v}^{1/2}f](t),
\end{equation}
for any $N\ge 0$, which follow directly from the definition in \eqref{energy definition} and the estimate \eqref{equivD}. In what follows, we fix $N=d+7$.

%Using \eqref{equivD}, we compute 
%$$
%\begin{aligned}
%{d}_{N}[f](t) 
%&= \sum_{N_{1}+N_{2} \le N}  \normm{f(t)}_{N_{1},N_{2},-N+N_{1}+2N_{2},D}^{2} 
%\\
%&\lesssim
% \sum_{N_{1}+N_{2} \le N}  \normm{\langle v\rangle^{-1/2}f(t)}_{N_{1},N_{2},-N+N_{1}+2N_{2},L^2}^{2}  
% +  \sum_{N_{1}+N_{2} \le N}  \normm{\langle v\rangle^{-1/2}\nabla_vf(t)}_{N_{1},N_{2},-N+N_{1}+2N_{2},L^2}^{2} 
%\\
%& \lesssim e_{N}[\inn{v}^{-1/2}f](t)+  \sum_{N_{1}+N_{2} \le N}  \normm{\langle v\rangle^{-1/2}f(t)}_{N_{1},N_{2}+1,-N+N_{1}+2N_{2},L^2}^{2} 
%\end{aligned}
%$$
%Changing $N_2 \mapsto \tilde N_2 = N_2+1$, we get 
%$$
% \begin{aligned}
% &
% \sum_{N_{1}+N_{2} \le N}  \normm{\langle v\rangle^{-1/2}f(t)}_{N_{1},N_{2}+1,-N+N_{1}+2N_{2},L^2}^{2} 
% \\
% &\lesssim \sum_{N_{1}+\tilde N_{2} \le N+1}  \normm{\langle v\rangle^{-1/2}f(t)}_{N_{1},\tilde N_{2},-N+N_{1}+2\tilde N_{2} - 2,L^2}^{2} 
% \\
% &\lesssim \sum_{N_{1}+\tilde N_{2} \le N+1}  \normm{\langle v\rangle^{1/2}f(t)}_{N_{1},\tilde N_{2},-(N+1)+N_{1}+2\tilde N_{2} ,L^2}^{2} 
%\\& \le e_{N+1}[\inn{v}^{1/2}f](t)
%  \end{aligned}$$
%upon noting the weight $\langle v\rangle^{-r}$ in the norm $\|\cdot \|_{N_1,N_2,r,L^2}$.  
% 

\subsection*{STEP 1) Nonlinear term control.}

In this step, we shall prove that 
\begin{equation}\label{claimbd}
\norm{\iint_{\T^d \times \R^d} \inn{v}^{-2(-N+\norm{\a}+2\norm{\b})} \pt^{\a}_{\b}h \pt^{\a}_{\b}N(f,f,f) \dd v \dd x} \lesssim d_{N}[h]d_{N}[f]e_{N}[f](1+e_{N}(f))^{N}
\end{equation}
for all multi-indexes $\alpha,\beta$ with $|\alpha|+|\beta|\le N$. Indeed, using Proposition \ref{proposition:N upper bound}, with $\theta = -N+\norm{\a}+2\norm{\b}$, we have
\begin{equation} \label{ineq:section 5 step1}
\begin{split}
&\norm{\iint_{\T^d \times \R^d} \inn{v}^{-2(-N+\norm{\a}+2\norm{\b})} \pt^{\a}_{\b}h \pt^{\a}_{\b}N(f,f,f) \dd v \dd x} 
\\&\lesssim  \normm{\pt^{\a}_{\b}h}_{0,0,-N+\norm{\a}+2\norm{\b},D} 
\Big(\sum \normm{f}_{\norm{\s_{3}},\norm{\g_{3}},-N+\norm{\a}+2\norm{\b},D}
\normm{f}_{\norm{\s_{2}},\norm{\g_{2}},r,D} \prod_{j}(1+\normm{f}_{\norm{\s_{1,j}},\norm{\g_{1,j}},r,D}) \\
&\qquad+ \normm{f}_{\norm{\s_{3}},\norm{\g_{3}},-N+\norm{\a}+2\norm{\b},D} \normm{f}_{\norm{\s_{1,1}},\norm{\g_{1,1}},r,D}  \prod_{j \neq 1}(1+\normm{f}_{\norm{\s_{1,j}},\norm{\g_{1,j}},r,D})\\
&\qquad+ \normm{f}_{\norm{\s_{2}},\norm{\g_{2}},r,D} \normm{f}_{\norm{\s_{1,1}},\norm{\g_{1,1}},r,D}  \prod_{j \neq 1}(1+\normm{f}_{\norm{\s_{1,j}},\norm{\g_{1,j}},r,D}) \Big),
\end{split}
\end{equation}
for any $r\ge 0$. We now bound each term on the right-hand side. By the definition of the dissipation norm, we have $\normm{\pt^{\a}_{\b}h}_{0,0,-N+\norm{\a}+2\norm{\b},D}\le d_{N}[h]$. It thus remains to focus on bounding the norms of $f$. We first claim that 
\begin{equation}\label{claima123}
\begin{aligned}
\normm{f}_{\norm{\sigma_{1,j}},\norm{\gamma_{1,j}},r,D} + \normm{f}_{\norm{\sigma_{2}},\norm{\gamma_{2}},r,D} + 
\normm{f}_{\norm{\sigma_{3}},\norm{\gamma_{3}},-N+\norm{\a}+2\norm{\b},D} \lesssim& d_{N}[f]
\end{aligned}
\end{equation}
for large enough $r\ge 0$. Indeed, in view of Remark \ref{rem-abi}, either $|\sigma_i|+|\gamma_i| \le d+6$ or $|\sigma_i|+|\gamma_i| \le |\alpha|+|\beta|$, both of which imply that $|\sigma_i|+|\gamma_i|\le N$, recalling $N=d+7$. Therefore, $\normm{f}_{\norm{\sigma_i},\norm{\gamma_i},r,D} \le d_{N}[f]$, upon taking $r$ large enough so that $r\ge -N + |\sigma_i|+2|\gamma_i|$. As for the norm involving $(\sigma_3, \gamma_3)$, we observe that 
in the case when $(\sigma_3, \gamma_3) = (|\alpha_3|, |\beta_3|)$, then we clearly have 
$\normm{f}_{\norm{\a_{3}},\norm{\b_{3}},-N+\norm{\a}+2\norm{\b},D} \lesssim d_{N}[f]
$ as desired. On the other hand, in the case when $(\sigma_3, \gamma_3) =(|\alpha_3|+\lfloor\frac{d}{2}+1 \rfloor , |\beta_3|)$, then by construction, see \eqref{defsigmai}, we have $\norm{\a_3}+\norm{\b_3} < \frac{d}{2}+5$ and there is an $i\in \{(1,1),\cdots (1,n),2\}$ so that 
$\norm{\a_i}+\norm{\b_i} \ge \frac{d}{2}+5$. As a result, we have 
\begin{equation}\label{lowgamma3}
\begin{aligned}
|\a| + 2|\b| 
&\ge |\a_{i}| + |\a_{3}| + 2(|\b_{i}| +|\b_{3}|)  
\\
&\ge \frac{d}{2}+5 + |\s_{3}| -\lfloor\frac{d}{2}+1 \rfloor   +2|\g_{3}| 
\\&\ge |\s_{3}| + 2|\g_{3}| +4.
\end{aligned}
\end{equation}
Therefore, in this case, we bound 
$$
\normm{f}_{\norm{\sigma_{3}},\norm{\gamma_{3}},-N+\norm{\a}+2\norm{\b},D} \le \normm{f}_{\norm{\sigma_{3}},\norm{\gamma_{3}},-N+\norm{\s_3}+2\norm{\g_3},D} \lesssim d_{N}[f]
$$
upon recalling that $|\s_3|+|\g_3|\le N$, see Remark \ref{rem-abi}. This completes the proof of the claim \eqref{claima123}. 

In order to prove \eqref{claimbd}, we need to improve the bounds on the right-hand side to include the energy norm $e_N[f]$. In the case when $|\alpha_i|+|\beta_i|<\frac{d}{2}+5$, using \eqref{equivD}, we have
\begin{align}\label{dbdeN}
\normm{f}_{\norm{\s_{i}},\norm{\g_{i}},r,D} \lesssim \normm{f}_{\norm{\s_{i}},\norm{\g_{i}},r+\frac{1}{2}, L^2} + \normm{f}_{\norm{\s_{i}},\norm{\g_{i}}+1,r+\frac{1}{2}, L^2}  \lesssim e_{N}[f],
\end{align}
since $|\s_i|+ |\g_i| \le d+6 \le N-1$ by Remark \ref{rem-abi}, upon taking $r$ large enough. This, together with \eqref{claima123}, proves \eqref{claimbd} in the case when $|\alpha_i|+|\beta_i|<\frac{d}{2}+5$ for all $i\in \{(1,1),\cdots (1,n),2\}$ (i.e. not including $i=3$). 

It remains to consider the case when $|\alpha_{i_0}|+|\beta_{i_0}|\ge\frac{d}{2}+5$ for some $i_0\in \{(1,1),\cdots (1,n),2\}$. Note that necessarily, due to the partitions \eqref{partitionab}, $|\alpha_i|+|\beta_i| < \frac{d}{2}+5$ for all $i\not =i_0$ (including $i=3$). As a result, \eqref{dbdeN} holds for all $i \not =i_0$. Therefore, in view of the right-hand side of \eqref{ineq:section 5 step1}, it suffices to prove that 
\begin{equation}\label{clgamma3}
\normm{f}_{\norm{\s_{3}},\norm{\g_{3}},-N+\norm{\a}+2\norm{\b},D} \le e_{N}[f]. 
\end{equation}
This indeed holds, since in this case, we note that \eqref{lowgamma3} is valid and therefore, similar to \eqref{dbdeN}, we bound 
\begin{align*}
\normm{f}_{\norm{\s_{3}},\norm{\g_{3}},-N+\norm{\a}+2\norm{\b},D}   
&\lesssim \normm{f}_{\norm{\s_{3}},\norm{\g_{3}},-N+\norm{\s_3}+2\norm{\g_3}+4, L^2} + \normm{f}_{\norm{\s_{3}},\norm{\g_{3}}+1,-N+\norm{\s_3}+2\norm{\g_3}+4, L^2}  
\\&\lesssim e_{N}[f],
\end{align*}
upon noting that $|\sigma_3|+|\gamma_3|\le N-1$ (since $|\alpha_3|+|\beta_3| < \frac{d}{2}+5$ in this case; see Remark \ref{rem-abi}). This completes the proof of \eqref{claimbd}. 

\subsection*{STEP 2) Pure spacial derivative control.}

We assume that the initial perturbations satisfy 
\begin{equation} 
\begin{aligned}
\iint_{\T^d\times\R^d} \begin{pmatrix}
    1 \\
    v_{i} \\ 
    \norm{v}^2
\end{pmatrix}
f(0,x,v) \dd v \dd x =& 0 \;\;  \text{ for all }  i\in\{1,\cdots d\}.
\end{aligned}
\end{equation}
Multiplying
$\begin{pmatrix}
    1 \\
    v_{i} \\ 
    \norm{v}^2
\end{pmatrix}$ to \eqref{LBeqs}, 
we obtain 
\begin{equation*}
\begin{pmatrix}
    1 \\
    v_{i} \\ 
    \norm{v}^2
 \end{pmatrix} \pt_t f + \begin{pmatrix}
    1 \\
    v_{i} \\ 
    \norm{v}^2
\end{pmatrix} v\cdot \nabla f = \begin{pmatrix}
    1 \\
    v_{i} \\ 
    \norm{v}^2
\end{pmatrix}(L[f]+N(f))
\end{equation*}
Since $L[f]$ and $N(f)$ are perpendicular to the kernel of $L$, the mass, momentum and energy conservations remain to valid for all times, namely \eqref{0-f-a-conservation-law}. Thus, by Proposition \ref{proposition:0-macro-L2-estimate}, we have
\begin{equation}\label{Pf - l2 estimate final}
\begin{aligned}
\int_{s}^{t} \normm{ \P \pt^{\a} f}_{L^{2}_{x,v}}^2\dd \tau &\lesssim \; 
\big[ G_{0} (t)-  G_{0} (0)\big]
+  \int_{0}^{t} \normm{\ip \pt^{\a} f}_D^{2} \; ds
 \\&\quad  +\normmm{f}(t)^{4}(1+\normmm{f}(t))^{2N},
\end{aligned}
\end{equation}
where $\normmm{f}(t)$ is defined as in \eqref{defBignorm}. Note that $\normmm{f}(t)$ is an increasing function in $t$. According to the unweighted energy estimate in Lemma \ref{weighted energy estimate} we have
\begin{equation} \label{energy estimate final 1}
\normm{\pt^\a f}^2_{L^2_{x,v}}(t)  + \int_{0}^{t} \normm{\ip \pt^{\a} f}_{D}^2
 \dd s \lesssim \normmm{f_{0}}^2 +\normmm{f}(t)^{4}(1+\normmm{f}(t))^{2N}.
\end{equation}
Combining \eqref{Pf - l2 estimate final} and  \eqref{energy estimate final 1}, we derive
\begin{equation} \label{energy estimate final pure spacial 1}
\normm{\pt^\a f}^2_{L^2_{x,v}}(t)  + \int_{0}^{t} \normm{\pt^{\a} f}_{D}^2
 \dd s \lesssim \normmm{f_{0}}^2 +\normmm{f}(t)^{4}(1+\normmm{f}(t))^{2N}.
\end{equation}
Next, applying the weighted energy estimate for $\pt^{\a}f$ in Lemma \ref{weighted energy estimate}, we thus obtain 
\begin{equation} \label{energy estimate final pure spacial 2}
\normm{\inn{v}^{N- \norm{\a}}\pt^\a f}^2_{L^2_{x,v}}(t)  + \int_{0}^{t} \normm{\inn{v}^{N- \norm{\a}} \pt^{\a} f}_{D}^2
 \dd s \lesssim \normmm{f_{0}}^2 +\normmm{f}(t)^{4}(1+\normmm{f}(t))^{2N}.
\end{equation}

\subsection*{STEP 3) Close the bootstrap.}

To close the nonlinear iterative scheme, we need to derive weighted estimates for derivatives. Indeed, we recall the results from 
Lemma \ref{weighted energy estimate}
that 
\begin{equation*}
\begin{aligned}
\pt_{t} \normm{\inn{v}^{-\th}\pt^\a_{\b} f}^2_{L^2_{x,v}} + \normm{\inn{v}^{-\th} \pt^\a_{\b} f}_{D}^2 
\lesssim&  \normm{\inn{v}^{-\th+1} \pt^{\a+1}_{\b-1} f}_{D}^2+ \sum_{\norm{\b'} < \norm{\b}}\normm{\inn{v}^{-\th} \pt^{\a}_{\b'} f}_{D}^2
\\&+ \int_{\T^d \times \R^d} \inn{v}^{-2 \th}\pt^\a_{\b} f \pt^\a_{\b} g \dd x \dd v.
\end{aligned}
\end{equation*}
Take $\th=-N + \norm{\a} + 2\norm{\b}$. Since $\th -1 = -N + \norm{\a} +1 + 2(\norm{\b}-1)$, the $v$-weight in $\normm{\inn{v}^{-\th+1} \pt^{\a+1}_{\b-1} f}_{D}$ is consistent with those in $\normmm{\cdot}$. Proceeding by induction in $|\beta|$ and using the pure spatial derivative estimates in \eqref{energy estimate final pure spacial 2}, we obtain 
$$
\begin{aligned}
&\normm{\inn{v}^{N- \norm{\a}- 2\norm{\b}}\pt^\a_{\b} f}^2_{L^2_{x,v}}(t)  + \int_{0}^{t} \normm{\inn{v}^{N- \norm{\a}- 2\norm{\b}} \pt^{\a}_{\b} f}_{D}^2
 \dd s 
\lesssim \normmm{f_{0}}^2 +\normmm{f}(t)^{4}(1+\normmm{f}(t))^{2N},
\end{aligned}
$$
for any $|\alpha|+|\beta|\le N$. Combining and recalling \eqref{defBignorm}, we find universal constants $C_0,C_1$ so that 
\begin{equation}
\normmm{f}^2(t) \le C_0 \normmm{f_{0}}^2 +C_1\normmm{f}(t)^{4}(1+\normmm{f}(t))^{2N}
\end{equation}
for any $t\ge 0$, provided that the right hand side remains finite. By the standard continuous induction, this yields $\normmm{f}^2(t) \le C_0 \normmm{f_{0}}^2$ for all $t\ge 0$, provided that the initial perturbations $\normmm{f_{0}}$ are sufficiently small, leading to the existence of a global-in-time solution to \eqref{LBeqs}. This ends the proof of Theorem \ref{theo-main}. 

\section{Proof of the Theorem 2}\label{pf of thm2}

In this section, we prove our main results stated in Theorem \ref{theo-2}. The proof of convergence to equilibrium now follows similarly to that for the homogenous case provided in Chapter 3 of \cite{duerinckx2023well}. We shall thus only detail the modification to the present inhomogeneous setting. Indeed, as done in \cite{StrainGuo}, we consider the mixed weight function 
\begin{align*}
w_{l,\th,K}(v) := \inn{v}^{l} \exp (K \inn{v}^{\th}).
\end{align*}
Let parameters $l,\th,K \ge 0$ be fixed either with $\th <2$, or with $\th =2$ and $K \ll_{V} 1$. We split the proof into two steps.

\subsection*{STEP 1) Compactness estimates.}
Assume that initial data $f_0$ satisfies
\begin{align*}
\iint_{\T^d\times \R^d} w^2_{l,\th,K}|f_0|^2 \; \dd x \dd v< \infty,
\end{align*}
and assume in addition that $\normmm{f_0} \ll_{V,l,\th,K} 1$. In this step we will prove the solution $f$ of \eqref{LBeqs} satisfies
\begin{align} \label{ineq:step1}
\iint_{\T^d\times \R^d} w^2_{l,\th,K}|f(t)|^2\; \dd x \dd v \lesssim \iint_{\T^d\times \R^d} w^2_{l,\th,K}|f_0|^2\; \dd x \dd v.
\end{align}
Similar to \cite{duerinckx2023well}, we focus only on the $\th < 2$ case. For the $\th = 2$ case, one can take the limit $\th \uparrow 2$ utilizing $K \ll 1$ and the uniform bounds. Multiplying \eqref{LBeqs} by $w_{l,\th,K}^2 f$ and integrating over $v$ and $x$, we obtain the following:
\begin{align} \label{eq1:thm2pf}
\frac{1}{2} \pt_t \iint_{\T^d \times \R^d} w_{l,\th,K}^2 |f|^2 \; \dd x \dd v+ I_1(f) =  I_2(f) + I_3(f) +I_4(f)
\end{align}
where
\begin{align*}
I_1(g) :=& \iint_{\T^d \times \R^d} (\nabla_{v}+v) (w_{l,\th,K}^2 g) \cdot A  (\nabla_{v}+v)g\; \dd x \dd v, \\
I_2(g) :=& \iint_{\T^d \times \R^d} \sqrt{\mu}(\nabla_{v}+v) (w_{l,\th,K}^2 g) \cdot \mathcal{B}(\nabla F_g)[(\nabla_v +v)g]\; \dd x \dd v, \\
I_3(g) :=& -\iint_{\T^d \times \R^d} (\nabla_{v}+v) (w_{l,\th,K}^2 g) \cdot \left(\mathcal{B}(\nabla F_g)[g]\nabla g - g\mathcal{B}(\nabla F_g)[\nabla g] \right)\; \dd x \dd v, \\
I_4(g) :=& \iint_{\T^d \times \R^d} (\nabla_{v}+v) (w_{l,\th,K}^2 g) \cdot \left(\mathcal{B}(\nabla F_g)[\sqrt{\mu}] - \mathcal{B}(\nabla \mu)[\sqrt{\mu}](\nabla_v +v)g \right) \; \dd x \dd v.
\end{align*}
Define $\normmb{g}_{l,\th,K}$ as
\begin{align*}
\normmb{g}_{l,\th,K} := \iint_{\T^d \times \R^d} w_{l,\th,K}^2 (\nabla g \cdot A \nabla g+ v g \cdot A vg)\; \dd x \dd v.
\end{align*}
Similarly as done in the sub-Step 1.1 and sub-Step 1.2 in \cite{duerinckx2023well}, we have
\begin{align*}
\normm{\inn{v}^{-\frac{1}{2}} w_{l,\th,K} g}_{L^2} +\normm{\inn{v}^{-\frac{3}{2}} w_{l,\th,K} \nabla g}_{L^2} \approx \normmb{g}_{l,\th,K},
\end{align*}
and
\begin{align} \label{eq2:thm2pf}
I_1(g) \ge \frac{1}{2}\normmb{g}_{l,\th,K}^2 - C_{V,l,\th,K} \normm{g}_{D}^2.
\end{align}
On the other hand, similar to the sub-Step 1.3 in \cite{duerinckx2023well}, we have
\begin{align} \label{eq3:thm2pf}
\norm{I_2(g)} + \norm{I_3(g)} + \norm{I_4(g)} \lesssim \normm{g}_{D}^2 + \normm{g}_{L^\infty_{x}H^2_{v}}\normmb{g}_{l,\th,K}^2.
\end{align}
Using the Sobolev embedding  and noting $d+7 - (\frac{d}{2}+1)-2\times 2 >0$, we bound 
\begin{align} \label{eq4:thm2pf}
\normm{g}_{L^\infty_{x}H^2_{v}} \lesssim\normm{g}_{H^{\frac{d}{2}+1}_{x}H^{2}_{v}} \lesssim \mathscr{E}_{d+7}^{\frac{1}{2}}[g] \ll 1
\end{align}
Combining \eqref{eq1:thm2pf}, \eqref{eq2:thm2pf}, \eqref{eq3:thm2pf} and \eqref{eq4:thm2pf} we have
\begin{align*}
\frac{1}{2} \pt_t \iint_{\T^d\times \R^d} w^2_{l,\th,K}|f(t)|^2 \; \dd x \dd v+ \frac{1}{4} \normmb{f}_{l,\th,K}^2 \lesssim \normm{f}_{D}^2 .
\end{align*}
Recalling $\normmm{f} \ll 1$ and combining Proposition \ref{proposition:0-macro-L2-estimate}, Lemma \ref{weighted energy estimate}, Lemma \ref{lemma:T - upper bound} and Lemma \ref{lemma:S - upper bound}, we can deduce
\begin{align*}
\frac{1}{2} \pt_t \normm{f}_{L^2_{x,v}}^2 + \frac{1}{C} \normm{f}_{D}^2 \le 0,
\end{align*}
which implies \eqref{ineq:step1}

\subsection*{STEP 2) Proof of time decay.}
Given $\e>0$, we have
\begin{align*}
\normm{g}_{D} \gtrsim \iint_{\T^d\times \R^d} \inn{v}^{-1}\norm{g}^2 \; \dd x \dd v\ge \inn{t}^{-\e} \iint_{\T^d\times \R^d} \mathbf{1}_{\inn{v} \le \inn{t}^\e}\norm{g}^2\; \dd x \dd v.
\end{align*}
Thus we have
\begin{align*}
\pt_t \iint_{\T^d \times \R^d} |f|^2 \; \dd x \dd v+ \frac{1}{C} \inn{t}^{-\e} \iint_{\T^d \times \R^d} |f|^2 \; \dd x \dd v\le  \frac{1}{C} \inn{t}^{-\e} \iint_{\T^d \times \R^d} \mathbf{1}_{\inn{v} > \inn{t}^\e}|f|^2\; \dd x \dd v.
\end{align*}
By integration we have
\begin{align*}
\iint_{\T^d \times \R^d} |f(t)|^2 \; \dd x \dd v\le& e^{-\frac{1}{1-\e} \frac{1}{C}\inn{t}^{1-\e}} \iint_{\T^d \times \R^d} |f_0|^2\; \dd x \dd v\\
&+\frac{1}{C} \int_{0}^{t} \inn{s}^{-\e} e^{-\frac{1}{1-\e} \frac{1}{C}(\inn{t}^{1-\e} - \inn{s}^{1-\e})} \iint_{\T^d \times \R^d} \mathbf{1}_{\inn{v} > \inn{t}^\e}|f|^2 \; \dd x \dd v\dd s.
\end{align*}
Appealing to \eqref{ineq:step1} in form of
\begin{align*}
\iint_{\T^d\times \R^d}  \mathbf{1}_{\inn{v} > \inn{t}^\e}|f|^2 \; \dd x \dd v\lesssim \inn{s}^{-2\e l}e^{-2K \inn{s}^{\e \th}} \iint_{\T^d\times \R^d}  w_{l,\th,K}^2|f_0|^2\; \dd x \dd v,
\end{align*}
we deduce
\begin{align*}
\iint_{\T^d \times \R^d} |f(t)|^2 \; \dd x \dd v\le \left(e^{-\frac{1}{1-\e}\frac{1}{C}\inn{t}^{1-\e}} + C \int_{0}^{t}  e^{-\frac{1}{1-\e} \frac{1}{C}(\inn{t}^{1-\e} - \inn{s}^{1-\e})} \inn{s}^{-2\e l -\e}e^{-2K \inn{s}^{\e \th}} \dd s \right)\\
\times \iint_{\T^d \times \R^d} w_{l,\th,K}^2|f_0|^2 \; \dd x \dd v.
\end{align*}
This yields Theorem \ref{theo-2}.

%\bibliography{reference}{}
%\bibliographystyle{plain}

\end{document}